\definecolor{Myblue}{rgb}{0,0,0.6}  
\pgfplotsset{width=7cm,compat=1.8}
\tikzset{
	string/.style={draw=#1, postaction={decorate}, decoration={markings,mark=at position .51 with {\arrow[color=#1]{>}}}},
	costring/.style={draw=#1, postaction={decorate}, decoration={markings,mark=at position .51 with {\arrow[draw=#1]{<}}}},
	ostring/.style={draw=#1, postaction={decorate}, decoration={markings,mark=at position .47 with {\arrow[draw=#1]{>}}}},
	ustring/.style={draw=#1, postaction={decorate}, decoration={markings,mark=at position .56 with {\arrow[draw=#1]{>}}}},
	oostring/.style={draw=#1, postaction={decorate}, decoration={markings,mark=at position .43 with {\arrow[draw=#1]{>}}}},
	uustring/.style={draw=#1, postaction={decorate}, decoration={markings,mark=at position .59 with {\arrow[draw=#1]{>}}}},
	directed/.style={string=blue!50!black}, 
	odirected/.style={ostring=blue!50!black}, 
	udirected/.style={ustring=blue!50!black}, 
	oodirected/.style={oostring=blue!50!black}, 
	uudirected/.style={uustring=blue!50!black},     
	redirected/.style={costring= blue!50!black},
	redirectedgreen/.style={costring= green!50!black},
	directedgreen/.style={string= green!50!black},
	redirectedlightgreen/.style={costring= green!65!black},
	directedlightgreen/.style={string= green!65!black},
	redirectedred/.style={costring= red!50!black},
	directedred/.style={string= red!50!black}%
}
\tikzset{-dot-/.style={decoration={
			markings,
			mark=at position 0.5 with {\fill circle (1.875pt);}},postaction={decorate}}}
\tikzset{
	Fdot/.style={circle, draw, fill, inner sep=0pt}, 
	Odot/.style={circle, draw, inner sep=0.1pt, minimum size=0.1cm}
}
\newcommand\tikzzbox[1]
\newcommand{\raisemath}[1]{\mathpalette{\raisem@th{#1}}}
\newcommand{\raisem@th}[3]{\raisebox{#1}{$#2#3$}}
\newcommand{\pic}[2][1.0]{
	\begin{tikzpicture}[baseline={([yshift=-.5ex]current bounding box.center)}]
	\node at (0,0) {\includegraphics[scale=#1]{figures/#2}};
	\end{tikzpicture}
}
\def\mcA{\mathcal{A}}
\def\mcC{\mathcal{C}}
\def\mcD{\mathcal{D}}
\def\mcG{\mathcal{G}}
\def\mcL{\mathcal{L}}
\def\mcQ{\mathcal{Q}}
\def\mcR{\mathcal{R}}
\def\mcS{\mathcal{S}}
\def\mcT{\mathcal{T}}
\def\mcW{\mathcal{W}}
\def\mcX{\mathcal{X}}
\def\mcY{\mathcal{Y}}
\def\mcZ{\mathcal{Z}}
\def\mcACA{{_A \mcC_A}}
\def\opk{\Bbbk}
\def\opZ{\mathbb{Z}}
\def\opid{\mathbbm{1}}
\def\a{\alpha}
\def\g{\gamma}
\def\d{\delta}
\def\abar{\overline{\a}}
\newcommand{\taubar}[1]{\overline{\tau_{#1}}}
\def\pd{\partial}
\def\lra{\leftrightarrow}
\newcommand{\A}{\mathcal{A}}
\newcommand{\CC}{\mathcal{C}}
\newcommand{\RT}{Reshetikhin--Turaev}
\newcommand{\C}{\mathds{C}}
\newcommand{\D}{\mathds{D}}
\newcommand{\R}{\mathds{R}}
\newcommand{\Z}{\mathds{Z}}
\def\1{\ifmmode\mathrm{1\!l}\else\mbox{\(\mathrm{1\!l}\)}\fi}
\newcommand{\one}{\mathbbm{1}}
\newcommand{\be}{\begin{equation}}
  \newcommand{\ee}{\end{equation}}
\newcommand{\bes}{\begin{equation*}}
  \newcommand{\ees}{\end{equation*}}
\newcommand{\id}{\text{id}}
\newcommand{\End}{\operatorname{End}}
\newcommand{\ev}{\operatorname{ev}}
\newcommand{\tev}{\widetilde{\operatorname{ev}}}
\newcommand{\coev}{\operatorname{coev}}
\newcommand{\tcoev}{\widetilde{\operatorname{coev}}}
\newcommand{\too}{\longrightarrow}
\def\lra{\longrightarrow}
\def\lmt{\longmapsto}
\DeclareMathOperator{\tr}{tr}
\newcommand*{\longhookrightarrow}{\ensuremath{\lhook\joinrel\relbar\joinrel\rightarrow}}
\newcommand{\im}{\operatorname{im}}
\newcommand{\Borddef}{\operatorname{Bord}^{\mathrm{def}}}
\newcommand{\Borddefn}[1] {\operatorname{Bord}^{\mathrm{def}}_{#1}}
\newcommand{\Bordribn}[1] {\operatorname{Bord}^{\mathrm{rib}}_{#1}}
\newcommand{\Bordriben}[1] {\widehat{\operatorname{Bord}}{}^{\mathrm{rib}}_{#1}}
\newcommand{\Borddefen}[1] {\widehat{\operatorname{Bord}}{}^{\mathrm{def}}_{#1}}
\newcommand{\zz}{\mathcal{Z}}
\newcommand{\zrt}{\mathcal{Z}^{\textrm{RT,}\mathcal C}}
\newcommand{\zrtca}{\mathcal{Z}^{\text{RT,}\mathcal{C}_{\mathcal{A}}}}
\newcommand{\zzc}{\mathcal{Z}^{\mathcal C}}
\newcommand{\zza}{\mathcal{Z}_{\mathcal A}}
\newcommand{\zzca}{\mathcal{Z}^{\mathcal C}_{\mathcal A}}
\newcommand{\Vect}{\textrm{vect}}
\newcommand{\eps}{\varepsilon}
\newcommand{\X}{\mathcal{X}}
\newcommand\arxiv[2]      {\href{https://arXiv.org/abs/#1}{#2}}
\newcommand\doi[2]        {\href{https://dx.doi.org/#1}{#2}}
\newcommand{\eqrefO}[1]{\hyperref[eq:O#1]{\text{(O#1)}}}
\newcommand{\eqrefT}[1]{\hyperref[eq:T#1]{\text{(T#1)}}}
\theoremstyle{definition} 
\newtheorem{definition}{Definition}
\newtheorem{proposition}[definition]{Proposition}
\newtheorem{theorem}[definition]{Theorem}
\newtheorem{lemma}[definition]{Lemma}
\newtheorem{remark}[definition]{Remark}
\newtheorem{example}[definition]{Example}
\newtheorem{construction}[definition]{Construction}
\newtheorem{property}[definition]{Property}
\numberwithin{equation}{section}
\numberwithin{definition}{section}
\numberwithin{figure}{section}
\newcommand\void[1]{}
\begin{document}

\title{Reshetikhin–Turaev TQFTs \\ close under generalised orbifolds}

\author{%
	Nils Carqueville$^*$ \quad
	Vincentas Mulevi\v{c}ius$^\#$ \quad 
	Ingo Runkel$^\#$ \\ [0.3cm]
	Gregor Schaumann$^\vee$ \quad 
	Daniel Scherl$^\#$	
	\\[0.5cm]
	\normalsize{\texttt{\href{mailto:nils.carqueville@univie.ac.at}{nils.carqueville@univie.ac.at}}} \\  %
	\normalsize{\texttt{\href{mailto:vincentas.mulevicius@uni-hamburg.de}{vincentas.mulevicius@uni-hamburg.de}}} \\  %
	\normalsize{\texttt{\href{mailto:ingo.runkel@uni-hamburg.de}{ingo.runkel@uni-hamburg.de}}}\\[0.1cm]	\normalsize{\texttt{\href{mailto:gregor.schaumann@uni-wuerzburg.de}{gregor.schaumann@uni-wuerzburg.de}}} \\  %
	\normalsize{\texttt{\href{mailto:daniel.scherl@uni-hamburg.de}{daniel.scherl@uni-hamburg.de}}} \\[0.5cm]  %
	\hspace{-1.2cm} {\normalsize\slshape $^*$Universit\"at Wien, Fakult\"at f\"ur Physik, Wien, Austria}\\[-0.1cm]
	\hspace{-1.2cm} {\normalsize\slshape $^\#$Fachbereich Mathematik, Universit\"{a}t Hamburg, Germany}\\[-0.1cm]
	\hspace{-1.2cm} {\normalsize\slshape $^\vee$Institut f\"{u}r Mathematik, Universit\"{a}t W\"{u}rzburg, Germany}
}

\date{}
\maketitle

\begin{abstract}
We specialise the construction of orbifold graph TQFTs introduced in~\cite{CMRSS1} to Reshetikhin--Turaev defect TQFTs.
We explain that the modular fusion category $\mcC_\A$ constructed in \cite{MuleRunk} from
an orbifold datum $\A$ in a given modular fusion category $\mcC$ is a special case of the Wilson line
ribbon categories introduced as part of the general theory of orbifold
graph TQFTs. Using this, 
we prove that the Reshetikhin--Turaev TQFT obtained from $\mcC_\A$ is equivalent to the orbifold of the TQFT for $\mcC$ with respect to the orbifold datum $\A$.
\end{abstract}

\newpage

\tableofcontents

\section{Introduction and summary}

The generalised orbifold construction is a procedure to obtain a new TQFT from a given defect TQFT, that is, a TQFT defined on stratified bordisms \cite{CRS1}. The procedure is best understood as an internal state sum construction and indeed, the Turaev--Viro--Barrett--Westbury state sum TQFT 
is in this sense a generalised orbifold of the trivial TQFT \cite{CRS3}.

In \cite{CMRSS1} we extended the generalised orbifold construction to 3-dimensional graph TQFTs. 
By a graph TQFT we mean a TQFT defined on oriented bordisms with embedded ribbon graphs coloured by objects and morphisms in a given ribbon category. 
The strands and coupons of these ribbon graphs can be interpreted as Wilson lines and junction points.

The starting point for the generalised orbifold construction are 3-dimensional defect TQFTs as introduced in \cite{CMS,CRS1}.
Such TQFTs are symmetric monoidal functors whose source categories $\Borddef_3(\D)$ have stratified 3-dimensional bordisms as morphisms, 
with all strata carrying labels taken from a prescribed set of ``defect data''~$\D$. 
In particular, 1- and 2-strata are interpreted as line and surface defects, respectively. 

The generalised orbifold construction takes a defect TQFT 
\be 
\zz \colon \Borddef_3(\D)\lra\Vect
\ee 
and a particular set of defect labels $\A\subset\D$ as input to produce a graph TQFT 
\be 
\label{eq:GraphTQFTGeneral}
\zza \colon \Bordribn3(\mathcal W_\A) \lra \Vect \, . 
\ee 
As explained in \cite{CMRSS1} and reviewed in Section~\ref{subsec:RT_orbifold_graph_TQFT} below, this is done by decomposing bordisms into special stratifications that we call ``admissible skeleta'', decorating them with the chosen orbifold datum~$\A$, and then evaluating with~$\zz$ to obtain~$\zza$ as a  colimit. 
The defining conditions on~$\A$ are such that this construction is independent of the choice of skeleta. 
Embedded ribbon graphs are treated by projecting them on the 1- and 2-strata of the chosen skeleta. Compatibility constraints naturally lead to the ribbon category of Wilson lines~$\mathcal W_\A$, which is used to colour ribbon graphs of the orbifold theory.

\medskip

In this paper we will apply the construction in \cite{CMRSS1} to Reshetikhin--Turaev theories.
Given a modular fusion category~$\mcC$,
the Reshetikhin--Turaev construction \cite{turaevbook} provides a 3-dimensional graph TQFT 
\be 
\zrt \colon \Bordriben{3}(\mcC) \lra \Vect \, ,
\ee 
where the source category is given by (a central extension of) 3-dimensional bordisms with embedded $\mcC$-coloured ribbon graphs, cf.\ Section~\ref{subsec:RT_graph_TQFT} below. 
Every Reshetikhin--Turaev graph TQFT can be lifted to a defect TQFT 
\be 
\label{eq:RTDefectTQFTIntroduction}
\zzc \colon \Borddefen{3}(\D^\mcC)\lra\Vect
\ee 
with both line defects and surface defects. 
Indeed, as explained in \cite{ks1012.0911, fsv1203.4568, CRS2}, an internal 2-dimensional state sum construction identifies $\Delta$-separable symmetric Frobenius algebra in~$\mcC$ as labels for surface defects, and certain modules over such algebras as labels for line defects, see Section~\ref{subsec:RT_defect_TQFT} for more details. 
The graph TQFT $\zrt$ can be viewed as the restriction of~$\zzc$ to bordisms without surface defects. 

Our main result is to show that the general construction of orbifold graph TQFTs~\eqref{eq:GraphTQFTGeneral}, applied to the defect TQFT $\zzc$ in~\eqref{eq:RTDefectTQFTIntroduction}, produces a graph TQFT which is again of Reshetikhin--Turaev type. 
To state the precise result, we use that an orbifold datum\footnote{%
    What we call an ``orbifold datum'' here is called a ``special orbifold datum'' in \cite{CRS3,CMRSS1}. Since in this paper only special orbifold data appear, we will drop the ``special''.
    \label{fn:special}
} 
$\mcA$ for~$\zzc$ can be described as a collection of algebraic data in the underlying modular fusion category $\mcC$ (see Section~\ref{subsubsec:SpecialOrbifoldData}). 
The resulting ribbon category $\mcC_\A := \mathcal W_\A$ was first studied in \cite{MuleRunk}, where it was shown to be a modular fusion category, provided that $\A$ is simple.
A simple orbifold datum $\mcA$ for~$\zzc$ therefore yields two -- a priori distinct -- graph TQFTs:
$\zrtca$ obtained from the Reshetikhin--Turaev construction based on~$\mcC_\A$, and
\be 
\zzca \colon \Bordriben{3}(\mcC_\mcA)\lra\Vect
\ee 
obtained as the $\A$-orbifold of~$\zzc$. 
We show in Theorem~\ref{thm:tqfts_isomorphic} that they are the same:

\bigskip 

\noindent
\textbf{Theorem. }
Let $\A$ be a simple orbifold datum in a modular fusion category~$\mcC$. Then the graph TQFTs $\zrtca$ and $\zzca$ are isomorphic.
\bigskip 

The proof is inspired by a similar equivalence between the graph TQFTs of Turaev--Viro--Barrett--Westbury type obtained from a spherical fusion category~$\mcS$, and of Reshetikhin--Turaev type obtained from the Drinfeld centre $Z(\mcS)$. In fact, this result is a special case of our Theorem~\ref{thm:tqfts_isomorphic} as there is an orbifold datum~$\A^\mcS$ in the category $\mcC = \Vect$ of finite-dimensional vector spaces such that $\zz_{\A^\mcS}^{\Vect}$ is isomorphic to the Turaev--Viro--Barrett--Westbury TQFT based on~$\mcS$ \cite[Sect.\,4]{CRS3}, 
while the associated modular fusion category $\Vect_{\A^\mcS}$ is equivalent to $Z(\mcS)$ \cite[Sect.\,4.2]{MuleRunk}.
 
More specifically, we proceed analogously to \cite[Ch.\,17]{TVireBook}.
The key technical lemma (cf.\ Lemma~\ref{lem:funct_iso}) is that the following three conditions on monoidal functors $F,G$ with values in $\Vect$ 
imply a unique monoidal equivalence $F \cong G$: 
(i) $F$ is non-degenerate, 
(ii) $\dim F(X) \geqslant \dim G(X)$ for all objects~$X$ in the source category, and 
(iii) $F$ and~$G$ agree on all endomorphisms of the unit object. 
Applied to our case of $F = \zrtca$ and $G=\zzca$, we observe that
(i) all Reshetikhin--Turaev TQFTs are known to be non-degenerate \cite[Ch.\,IV]{turaevbook}, 
(ii) one can define surjective linear maps from the state spaces of $\zrtca$ to those of~$\zzca$ (Section~\ref{subsec:state_spaces}), and
(iii) the invariants associated to closed manifolds with embedded ribbon graphs agree (Section~\ref{subsec:InvariantsOf3Manifolds}). 
Apart from rather explicit computations in modular fusion categories, the main technical work in our proof is to obtain ``skeleta from surgery'' (Section~\ref{subsec:skeleta_from_surgery}), mediating between the surgery construction in Reshetikhin--Turaev theory and the decompositions needed for the orbifold construction. 

\medskip

There are a number of natural questions which arise from the above theorem. 
For example, given two modular fusion categories $\mcC$ and $\mcD$, one can ask whether
$\mathcal{Z}^{\textrm{RT,}\mathcal D}$ is isomorphic to a generalised orbifold of 
$\zrt$. It will be shown in \cite{Mu-prep} that this happens if and only if $\mcC$ and $\mcD$ are Witt equivalent \cite{DMNO}. 
In fact Witt equivalence is also the obstruction which determines if two TQFTs of Reshetikhin--Turaev type can be joined by a surface defect in the first place \cite{fsv1203.4568,KMRS}.

Another direction to continue this work is to not only consider line defects in the orbifold theory, but also surface defects, and, in fact, more generally surface defects between different orbifolds. 
This amounts to a 3-dimensional version of the orbifold completion introduced in two dimensions in \cite{CR}. 
The expected result is that Reshetikhin--Turaev TQFTs for a given Witt class (with defects of all dimensions, and with 3-strata labelled by possibly different modular fusion categories in the given Witt class) are already orbifold complete. Ideas related to orbifold completion have also been pursued in the context of  framed (as opposed to oriented) tangential structures and 
higher idempotent completions in \cite{Gaiotto:2019xmp}.

\subsubsection*{Acknowledgements} 

N.\,C.\ is supported by the DFG Heisenberg Programme. 
V.\,M.\ is partially supported by the DFG Research Training Group 1670.
I.\,R.\ is partially supported by the Cluster of Excellence EXC 2121 ``Quantum Universe'' - 390833306.

\section{Background} 
\label{sec:background}

We review some prerequisites on modular fusion categories (Section~\ref{subsubsec:ModularFusionCategories}), algebras and modules (Section~\ref{subsubsec:AlgebrasAndModules}), and orbifold constructions (Section~\ref{subsubsec:SpecialOrbifoldData}) which will be used extensively in the later sections. 
     
\subsection{Modular fusion categories}
\label{subsubsec:ModularFusionCategories}

A \textsl{modular fusion category}~$\CC$ over an algebraically closed field~$\Bbbk$ is a braided spherical fusion category over~$\Bbbk$ whose braiding is non-degenerate. 
Recall, e.\,g.\ from \cite{turaevbook, EGNO-book}, that this means in particular that~$\CC$ is a semisimple $\Bbbk$-linear monoidal category with only finitely many isomorphism classes of simple objects, and with simple tensor unit.
Moreover, invoking the spherical structure, every object $X\in\CC$ has a simultaneous left- and right- dual~$X^*$ with the (co)evaluation maps denoted by
\begin{align}
\ev_X = \pic[1.25]{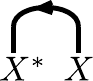}
\!
\colon 
X^* \otimes X \lra \one 
\, , \qquad 
\coev_X = \pic[1.25]{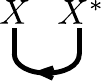}
\!
\colon \one \lra X \otimes X^* \, , 
\\ 
\tev_X = \pic[1.25]{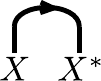}
\!\!
\colon 
X \otimes X^* \lra \one 
\, , \qquad 
\tcoev_X = \pic[1.25]{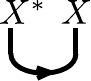}
\!
\colon \one \lra X^* \otimes X  \, , 
\end{align}
satisfying the pivotality conditions, see e.\,g.\ \cite[Lem.\,2.12]{CR}.
Our convention is to read string diagrams from bottom to top and from right to left, such that the braiding morphisms $c_{X,Y} \colon X\otimes Y \lra Y\otimes X$ as well as their inverses are depicted as follows: 
\be
c_{X,Y} = \pic[1.25]{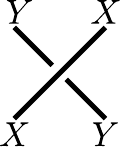}
\, , \qquad
c^{-1}_{X,Y} = \pic[1.25]{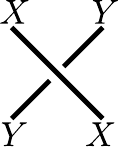}
\, . 
\ee
Non-degeneracy of the braiding means that if $c_{X,Y} \circ c_{Y,X} = \textrm{id}_{Y\otimes X}$ for all $X\in\CC$, then~$Y$ is isomorphic to $\one^{\oplus n}$ for some $n\in\opZ_{\geqslant 0}$.

\medskip

The above structure maps are subject to the standard coherence constraints (see e.\,g.\ \cite[Sect.\,2.10\,\&\,4.7\,\&\,8.1]{EGNO-book}), and to the \textsl{sphericality condition} which states that for every endomorphism $f\colon X \lra X$, the \textsl{left} and \textsl{right traces} agree: 
\begin{equation}
\textrm{tr}_\CC(f) := \pic[1.25]{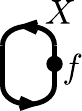} = \pic[1.25]{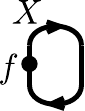}
\in \End(\one) \, .    
\end{equation}
The \textsl{quantum dimension} of an object $X\in\CC$ is 
\be 
\dim_\CC(X) := \textrm{tr}_\CC(\id_{X}) = \pic[1.25]{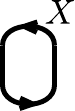} = \pic[1.25]{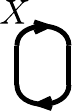}
\, , 
\ee 
and the \textsl{global dimension of}~$\CC$ is 
\be
\dim\CC := \sum_{i\in\textrm{Irr}_\CC} \dim_\CC(i)^2 \, , 
\ee 
where here and below by $\textrm{Irr}_\CC$ we denote a 
\medskip
set of representatives of isomorphism classes of simple objects in~$\CC$ such that $\one \in \textrm{Irr}_\CC$. 
For a modular fusion category $\mcC$ one has $\dim \mcC \neq 0$ (see \cite[Sect.\,II.3.2]{turaevbook}).
The construction of the TQFT in Section~\ref{sec:RT_theory} requires the choice of a square root of the global dimension,
\be 
\label{eq:DC}
\mathscr D_\CC \in \Bbbk
\quad \text{such that} \quad
(\mathscr D_\CC)^2 = \dim\CC \,.
\ee 

An important invariant 
of a modular fusion category~$\CC$ is its ``anomaly''. 
To give the definition, we recall the \textsl{twist} automorphisms 
\be
\label{eq:DC_ppmC_def}
\theta_X = \pic[1.25]{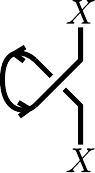} = \pic[1.25]{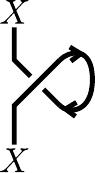}
\, , \qquad 
\theta_X^{-1} = \pic[1.25]{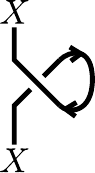} = \pic[1.25]{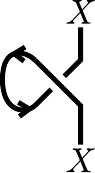}
\, , 
\ee 
and set\footnote{Note that we slightly abuse notation by identifying $\theta_i = \theta_i \cdot \id_i$ for simple objects~$i$; similarly we often view traces $\textrm{tr}_\CC(f)$ and quantum dimensions $\dim_\CC(X)$ as scalars.}
\be 
\label{eq:pC}
p_\CC^\pm := \sum_{i\in\textrm{Irr}_\CC} \theta_i^{\pm 1} \cdot \dim_\CC(i)^2 
\, .
\ee 
Then the \textsl{anomaly} of the pair~$(\CC,\mathscr D_\CC)$ 
is the scalar 
\be 
\label{eq:anomaly}
\delta_\CC 
:= 
\frac{p_\CC^+}{\mathscr D_\CC}
= 
\frac{\mathscr D_\CC}{p_\CC^-} 
\, . 
\ee 
That the two ratios are indeed equal is shown in \cite[Cor.\,3.1.11]{BakK}. For $\Bbbk=\C$, $\delta_\CC$ is a phase, which is why one considers the ratios in \eqref{eq:anomaly} rather than just $p_\CC^\pm$.

\subsection{Algebras and modules}
\label{subsubsec:AlgebrasAndModules}

A \textsl{$\Delta$-separable symmetric Frobenius algebra} is an object $A\in\CC$ with the structure of an associative unital algebra and a coassociative counital coalgebra, such that, with 
$
\!\!\!
\tikzzbox{\begin{tikzpicture}[very thick,scale=0.25,color=green!50!black, baseline=0.05cm]
	\draw[-dot-] (3,0) .. controls +(0,1) and +(0,1) .. (2,0);
	\draw (2.5,0.75) -- (2.5,1.5); 
	\fill (2,0) circle (0pt) node[left] (D) {};
	\fill (3,0) circle (0pt) node[right] (D) {};
	\fill (2.5,1.5) circle (0pt) node[right] (D) {};
	\end{tikzpicture}}
\!\!\!, 
\tikzzbox{\begin{tikzpicture}[very thick,scale=0.25,color=green!50!black, baseline=-0.3cm, rotate=180]
	\draw[-dot-] (3,0) .. controls +(0,1) and +(0,1) .. (2,0);
	\draw (2.5,0.75) -- (2.5,1.5); 
	\fill (2,0) circle (0pt) node[left] (D) {};
	\fill (3,0) circle (0pt) node[right] (D) {};
	\fill (2.5,1.5) circle (0pt) node[right] (D) {};
	\end{tikzpicture}}
\!\!
, 
\,
\tikzzbox{\begin{tikzpicture}[very thick,scale=0.25,color=green!50!black, baseline=-0.03cm]
	\draw (0,0) node[Odot] (unit) {};
	\draw (0,1) -- (unit);
	\end{tikzpicture}}
, 
\,
\tikzzbox{\begin{tikzpicture}[very thick,scale=0.25,color=green!50!black, baseline=0.05cm]
	\draw (0,1) node[Odot] (unit) {};
	\draw (0,0) -- (unit);
	\end{tikzpicture}}
$ 
denoting the (co)multiplication and (co)unit, we have 
\begin{equation}
\hspace{-0.5cm}
\begin{array}{ccc}
\pic[1.25]{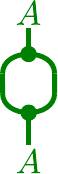} = \pic[1.25]{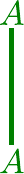}, &
\pic[1.25]{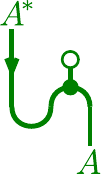} = \pic[1.25]{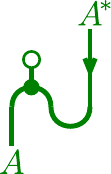}, &
\pic[1.25]{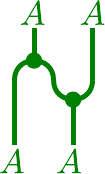} = \pic[1.25]{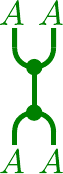} = \pic[1.25]{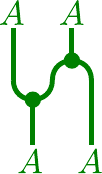}.\\
\text{(``$\Delta$-separable'')} &
\text{(``symmetric'')} &
\text{(``Frobenius'')}
\end{array}
\end{equation}

A (left) module $X$ of a Frobenius algebra $A$ is automatically a (left) comodule with the coaction
\begin{equation}
\label{eq:coaction}
\pic[1.25]{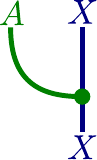} := \pic[1.25]{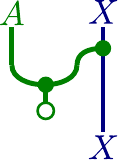} .
\end{equation}
If $A$ is a $\Delta$-separable symmetric Frobenius algebra, the tensor product over $A$ of a right module $X$ and a left module $Y$ can be realised as the image of a projector, namely
\begin{equation}
\label{eq:prod_over_A_proj}
X \otimes_A Y \cong \im P_{X\otimes Y} \, , \quad \text{where }
P_{X\otimes Y} := \pic[1.25]{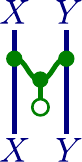} = \pic[1.25]{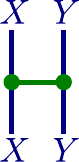} \, ,
\end{equation}
where in the last equality the expression \eqref{eq:coaction} was used.\footnote{
    Since $A$ is symmetric one does not need to distinguish between action and coaction, and hence we will not do so in our graphical notation.}
The relative tensor product $\otimes_A$ equips the category $\mcACA$ of $A$-$A$-bimodules with a monoidal structure.

\medskip

If $A$ is any algebra, then we endow $A\otimes A$ with an algebra structure with multiplication and unit 
\begin{equation}
\pic[1.25]{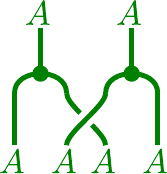}
\qquad\text{and}\qquad 
\pic[1.25]{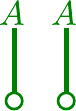}    
\end{equation} 
respectively. 
Then a right $(A\otimes A)$-module is equivalently an object~$X$ with two $A$-actions such that
\begin{equation}
\label{eq:AA_actions_commute}
\pic[1.25]{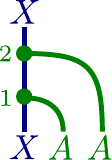} = \pic[1.25]{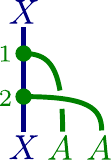} , 
\end{equation}
where we used indices $1,2$ to distinguish between the two actions.
For a left $A$-module~$Y$, we denote by $X\otimes_1 Y$ and $X\otimes_2 Y$ the relative tensor products with respect to the corresponding $A$-actions. 

\subsection{Orbifold data and associated modular fusion categories}
\label{subsubsec:SpecialOrbifoldData}

We review the notion of an \textsl{orbifold datum in a modular fusion category~$\CC$} as introduced in \cite[Sect.\,3.2]{CRS3}.\footnote{Recall from footnote~\ref{fn:special} that we always implicitly assume an orbifold datum to be special.}
It is a tuple 
\be 
\A = (A,T,\alpha,\overline{\alpha}, \psi,\phi)
\ee 
that consists of
\begin{itemize}
	\item
	a $\Delta$-separable symmetric Frobenius algebra~$A$ in~$\CC$, 
	\item 
	an $A$-$A\otimes A$-bimodule~$T$, 
	\item 
	$A$-$A\otimes A\otimes A$-bimodule maps $\alpha\colon T\otimes_2 T \lra T\otimes_1 T$ and $\overline{\alpha}\colon T\otimes_1 T \lra T\otimes_2 T$, 
	\item 
	an invertible $A$-$A$-bimodule map $\psi\colon A\lra A$, 
	\item 
	an invertible scalar $\phi \in \Bbbk^\times$ 
\end{itemize}
subject to the conditions \eqrefO{1}--\eqrefO{8} which we list in Figure~\ref{fig:SpecialOrbifoldDatum} in Appendix~\ref{app:identities}.
Here and below we use the following abbreviations for the endomorphisms obtained from $\psi$ via different $A$-actions on $T$ and on an arbitrary $A$-$A$-bimodule~$X$:
\begin{align}
\nonumber
&\pic[1.25]{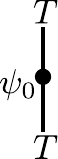} := \pic[1.25]{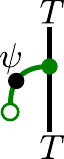} \, , \qquad
 \pic[1.25]{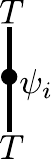} := \pic[1.25]{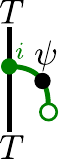} \, , \qquad
i \in \{1,2\} \, ,
\\ 
\label{eq:psi_omega_notation}
&\pic[1.25]{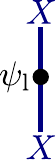} := \pic[1.25]{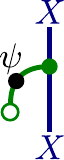} \, , \qquad
 \pic[1.25]{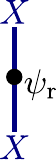} := \pic[1.25]{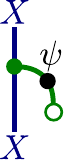} \, ,  \qquad
 \pic[1.25]{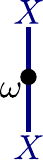} := \pic[1.25]{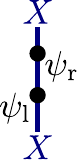} \, .
\end{align}
We also abbreviate $\omega := \psi_{\textrm{r}}\circ\psi_{\textrm{l}}$ and write an index next to the morphisms $\psi_{\textrm{l}}$, $\psi_{\textrm{r}}$, $\omega$ 
to distinguish between different bimodules if needed.
Note that if $X=A$, then $\omega = \psi^2$.

In keeping with the notation $\otimes_1$, $\otimes_2$ in \eqref{eq:AA_actions_commute}, given a right $A$-module $M$ it is convenient to write $M \otimes_0 T$ for $M \otimes_A T$. 

\medskip

\begin{definition}
Given an orbifold datum $\mcA$ in $\mcC$, as in \cite[Sect.\,3]{MuleRunk} we define the category $\mcC_\mcA$ to have
\begin{itemize}
\item
objects: tuples $\mcX = (X,\tau_1,\tau_2,\taubar{1},\taubar{2})$, where
\begin{itemize}
\item $X$ is an $A$-$A$-bimodule,
\item
$\tau_i\colon X \otimes_0 T \lra T \otimes_i X$,
$\taubar{i}\colon T \otimes_i X\lra X \otimes_0 T$,
$i \in \{1,2\}$, are $A$-$A \otimes A \otimes A$-bimodule morphisms (to be referred to as \textsl{crossings}),
\end{itemize}
which satisfy the identities \eqrefT{1}--\eqrefT{7} in Figure \ref{fig:CA_identities}; 
\item
morphisms: $f\colon \mcX \lra \mcY$ is an $A$-$A$-bimodule morphism such that
\begin{equation}
\label{eq:CA_morphism_cond}
\tau_i^X \circ (f \otimes_0 \id_T) = (\id_T \otimes_i f) \circ \tau_i^Y, \quad
i \in \{1,2\} \, .
\end{equation}
\end{itemize}
\end{definition}

Let us state some properties of the category $\mcC_\mcA$.
We refer to~\cite{MuleRunk} for more details and the proofs.
\begin{enumerate}[wide, labelwidth=!, labelindent=0pt, label=\arabic*.]
\item
$\mcC_\mcA$ is a multifusion category with the tensor product
\begin{equation}
\mcX \otimes \mcY = (X\otimes_A Y,\tau_1^{X,Y},\tau_2^{X,Y},\taubar{1}^{X,Y},\taubar{2}^{X,Y})    
\end{equation}
where the $T$-crossings are
\begin{equation}
\tau_i^{X,Y} = \pic[1.25]{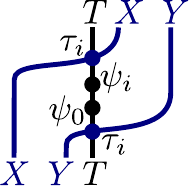} \quad , \qquad
\taubar{i}^{X,Y} = \pic[1.25]{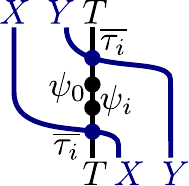} \, , \quad i \in \{1,2\} \, ,
\end{equation}
and unit is taken to be $\opid_{\mcC_\mcA} = (A,\tau_1^A,\tau_2^A,\taubar{1}^A,\taubar{2}^A)$, where
\begin{equation}
\label{eq:CA_prod_crossings_unit}
\tau_i^A = \pic[1.25]{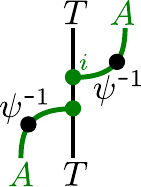} \quad , \qquad
\taubar{i}^A = \pic[1.25]{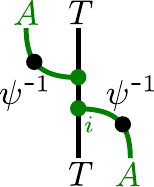} \, , \quad i \in \{1,2\} \, .
\end{equation}
An orbifold datum $\mcA$ is called \textsl{simple} if $\mcC_\mcA$ is fusion, i.\,e.\ $\dim_\Bbbk \mcC_\mcA(\opid_{\mcC_\mcA},\opid_{\mcC_\mcA}) = 1$.
\item
$\mcC_\mcA$ is spherical where the dual $\mcX^*$ of an object $\mcX\in\mcC_\mcA$ is defined to have $X^*$ as the underlying $A$-$A$-bimodule and the crossing morphisms are induced from those of $\mcX$.
The adjunction maps are:
\begin{align}
\ev_\mcX = \pic[1.25]{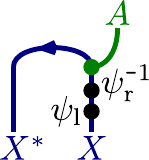} \,   , \qquad 
\coev_\mcX = \pic[1.25]{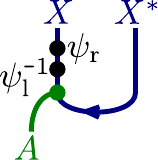} \, , 
\\ 
\tev_\mcX = \pic[1.25]{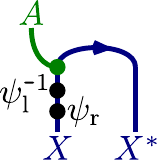}  \, , \qquad
\tcoev_\mcX = \pic[1.25]{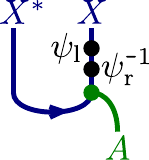} \, .
\end{align}
If $\mcA$ is a simple orbifold datum one can compute the trace of a morphism $f\in\End_{\mcC_\mcA}(\mcX)$ as follows:
\begin{equation}
\label{eq:trace_in_CA}
    \tr_{\mcC_\mcA} f = \frac{\tr_\mcC(\omega^2 \circ f)}{\tr_\mcC \psi^4} \, ,
\end{equation}
with $\tr_\mcC \psi^4 \neq 0$ holding automatically.
On the right-hand side of \eqref{eq:trace_in_CA}, $f$ is treated as a morphism in $\End_\mcC (A)$.
\item
$\mcC_\mcA$ is braided with the braiding morphisms of $\mcX,\mcY\in\mcC_\mcA$ given by
\begin{equation}
c_{\mcX,\mcY} = \phi^2 \cdot \pic[1.25]{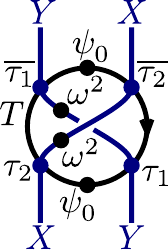} \, , \qquad
c^{-1}_{\mcX,\mcY} = \phi^2 \cdot \pic[1.25]{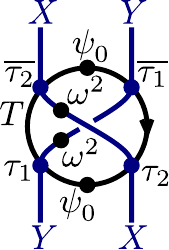} \, .
\end{equation}
If $\mcA$ is a simple orbifold datum, one has (\cite[Thm.\,3.17]{MuleRunk}):
\begin{theorem}
\label{thm:CA_is_modular}
For a simple orbifold datum $\mcA$ in $\mcC$, $\mcC_\mcA$ is a modular fusion category with
\begin{equation}
\label{eq:dimCA}
\dim \mcC_\mcA = \frac{\dim \mcC}{ \phi^8 \cdot (\tr_\mcC \psi^4)^2 } \, .
\end{equation}
\end{theorem}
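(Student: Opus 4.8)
The plan is to supply the two statements that the properties recalled above do not yet give: that the braiding of $\mcC_\mcA$ is non-degenerate, and the dimension formula \eqref{eq:dimCA}. Throughout I use that, for $\mcA$ simple, $\mcC_\mcA$ is already known to be a braided spherical fusion category (hence ribbon, with the canonical twist built from the braiding and the pivotal structure), and I use the trace formula \eqref{eq:trace_in_CA}. The strategy for both parts is to reduce questions about $\mcC_\mcA$ to questions about $\mcC$ by means of an explicit family of objects of $\mcC_\mcA$ induced from $\mcC$.

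First I would construct, for every $U\in\mcC$, an object $\widehat U\in\mcC_\mcA$ whose underlying $A$-$A$-bimodule is an induction of $U$ (for instance $A\otimes U\otimes A$ with its regular bimodule structure) and whose crossings $\tau_i$, $\taubar i$ are assembled from the braiding of $\mcC$ together with $\alpha$, $\overline{\alpha}$; verifying \eqrefT{1}--\eqrefT{7} for these reduces to \eqrefO{1}--\eqrefO{8}. After recording how $U\mapsto\widehat U$ interacts with $\otimes$ and with duals, the crucial point is to rewrite the braiding morphisms $c_{\mcX,\widehat U}$ and $c_{\widehat U,\mcX}$ of $\mcC_\mcA$, using the projector description \eqref{eq:prod_over_A_proj}, so that the $U$-strand is simply braided past the strands of the bimodule $X$ with the braiding of $\mcC$, up to the scalar $\phi^{\pm2}$ and the invertible crossings of $\mcX$. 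For non-degeneracy I then take a simple $\mcX\in\mcC_\mcA$ with $c_{\mcY,\mcX}\circ c_{\mcX,\mcY}=\id$ for all $\mcY$, specialise to $\mcY=\widehat U$, and unwind: the double braiding collapses to the double braiding of $\mcC$ between $U$ and the object underlying $X$ (the factors $\phi^{\pm2}$ cancel and the crossings are invertible), so, as this holds for all $U\in\mcC$, non-degeneracy of the braiding of $\mcC$ forces $X$ to be transparent as seen from $\mcC$; together with its $A$-bimodule structure and the axioms \eqrefT{1}--\eqrefT{7} this pins $\mcX$ down to a direct sum of copies of $\opid_{\mcC_\mcA}$, which is precisely the required non-degeneracy. (Alternatively, one computes the $S$-matrix $s_{\mcX,\mcY}=\tr_{\mcC_\mcA}(c_{\mcY,\mcX}\circ c_{\mcX,\mcY})$ in terms of that of $\mcC$ and checks its invertibility directly.)

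For the global dimension, \eqref{eq:trace_in_CA} gives $\dim_{\mcC_\mcA}(\mcX)=\tr_\mcC(\omega_X^2)/\tr_\mcC(\psi^4)$ for simple $\mcX$, hence $\dim\mcC_\mcA=(\tr_\mcC\psi^4)^{-2}\sum_{\mcX\in\textrm{Irr}_{\mcC_\mcA}}\tr_\mcC(\omega_X^2)^2$. I would evaluate the remaining sum by relating the simple objects of $\mcC_\mcA$ to those of $\mcC$ through the functor $U\mapsto\widehat U$ and its adjoint, rewriting the sum in terms of traces over $\mcC$ and, via orthogonality of characters in $\mcC$, in terms of $\dim\mcC=\sum_{i\in\textrm{Irr}_\mcC}\dim_\mcC(i)^2$; tracking the $\phi$-factor in the braiding and the $\psi$-insertions in \eqref{eq:trace_in_CA} then produces exactly the denominator $\phi^8(\tr_\mcC\psi^4)^2$. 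The same count is visible conceptually in the description of $\mcC_\mcA$ as (morally) a relative centre of the bimodule category $\mcACA$ built from $T$, combined with $\dim\mcACA=\dim\mcC$ (since $\mcACA\simeq\mcC^*_{\mcC_A}$), the extra scalar encoding the contribution of $T$ and of the $\psi$-twisted pivotal data.

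The hard part will be this last step: the scalar bookkeeping needed to make the sum over $\textrm{Irr}_{\mcC_\mcA}$ collapse to $\dim\mcC$ with exactly the claimed normalisation, and, in the non-degeneracy argument, actually forcing $X$ down to a power of $A$ rather than merely constraining it -- both of which draw on the combined strength of all of \eqrefO{1}--\eqrefO{8} and \eqrefT{1}--\eqrefT{7} rather than on any single identity.
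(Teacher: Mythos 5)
The paper does not prove this theorem: it is quoted verbatim from \cite[Thm.\,3.17]{MuleRunk}, so there is no proof here to compare your attempt against. What I can do is assess your sketch on its own terms.

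Your plan has the right overall shape --- reducing both the non-degeneracy and the dimension count to statements about~$\mcC$ via induced/pipe objects $U\mapsto\widehat U := P(A\otimes U\otimes A)$, which is indeed the central toolkit in \cite{MuleRunk} (cf.\ the pipe functor \eqref{eq:P-functor} and the decompositions \eqref{eq:adj_decompositions} used in the proof of Lemma~\ref{lem:red_ribb_in_CA_id}). But there are genuine gaps. In the non-degeneracy argument, the assertion that $c_{\widehat U,\mcX}\circ c_{\mcX,\widehat U}$ ``collapses to the double braiding of~$\mcC$ between $U$ and the object underlying~$X$'' is not justified: $\widehat U$ is only the image of a projector on $A\otimes U\otimes A$, and the $\mcC_\mcA$-braiding involves the $T$-crossings of~$\mcX$ together with the scalar $\phi^2$ on \emph{each} of the two braidings (so naively $\phi^4$, not a cancellation); the simplification to a $\mcC$-double braiding only holds after conjugating by the inclusion/projection defining $\widehat U$ and after several applications of \eqrefT{1}--\eqrefT{7} and \eqrefO{1}--\eqrefO{8}, none of which you carry out. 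Moreover, even granting that $X\cong\one^{\oplus n}$ as an object of~$\mcC$, it does not follow on general grounds that the $A$-$A$-bimodule structure on $\one^{\oplus n}$ is the regular one nor that the $\tau_i$ are the canonical ones --- ruling out nontrivial bimodule structures and crossings on a transparent underlying object is exactly where the work lies, and you explicitly flag this step without actually doing it. For the dimension formula, you correctly identify the ingredients (the trace formula \eqref{eq:trace_in_CA}, the adjunctions between $\mcC$, $\mcACA$ and $\mcC_\mcA$, and a character-orthogonality collapse to $\dim\mcC$), and the chain of decompositions you would need is closely analogous to the one the paper runs in the proof of Lemma~\ref{lem:red_ribb_in_CA_id}; but you do not perform the calculation, and you acknowledge as much. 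As written, the proposal is a plausible outline with the two hardest steps left open rather than a proof.
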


It will be important in our main comparison theorem in Section~\ref{sec:proof} that a simple orbifold datum even gives a preferred choice of the square root \eqref{eq:DC} of the global dimension of $\mcC_\mcA$:
\begin{equation}
\label{eq:DCA}
\mathscr{D}_{\mcC_\mcA} := \frac{\mathscr{D}_\mcC}{\phi^4 \cdot \tr_\mcC \psi^4} \, .
\end{equation}
In the Lemma~\ref{lem:C_CA_anomalies} below we will also show that the pairs $(\mcC,\mathscr{D}_\mcC)$ and $(\mcC_\mcA,\mathscr{D}_{\mcC_\mcA})$
have the same anomalies, i.\,e.\
\begin{equation}
    \delta_\mcC = \delta_{\mcC_\mcA} \,.
\end{equation}

\item
Given two objects $\mcX,\mcY\in\mcC_\mcA$ and any morphism $f\colon X\lra Y$ between the underlying objects in $\mcC$, the averaged morphism defined by
\begin{equation}
\label{eq:f_average}
\overline{f} = \phi^4 \cdot \pic[1.25]{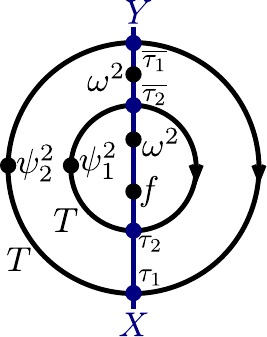}
\end{equation}
is a bimodule morphism and satisfies \eqref{eq:CA_morphism_cond}, i.\,e. it is a morphism in $\mcC_\mcA$.
In fact, the mapping $f\lmt\overline{f}$ acts as an idempotent projecting onto $\mcC_\mcA(\mcX,\mcY)$ seen as a subspace of $\mcC(X,Y)$.
\item
A useful tool to work with the category $\mcC_\mcA$ is the \textsl{pipe functor} 
\be \label{eq:P-functor}
P\colon\mcACA \lra \mcC_\mcA 
\ee 
from \cite[Sect.\,3.3]{MuleRunk}. 
It is biadjoint to the forgetful functor $\mcC_\mcA \lra \mcACA$ and can be seen as the free construction of an object of $\mcC_\mcA$.
For a bimodule $X\in\mcACA$ and a bimodule morphism $f\colon X \lra Y$, the underlying $A$-$A$-bimodule over the object $P(X)$ as well the morphism $P(f)$ are defined respectively by
\begin{equation}
\label{eq:pipe_functor}
\im\pic[1.25]{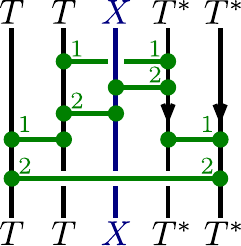} \qquad \text{and} \qquad \pic[1.25]{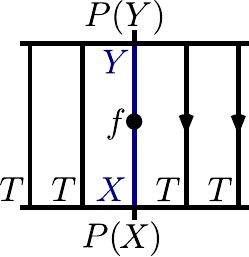} \,, 
\end{equation}
where the horizontal lines in the definition of $P(f)$ denote the inclusion into and projection onto the image defining $P(X)$ and $P(Y)$.
\end{enumerate}

\begin{remark}
The definition~\eqref{eq:pipe_functor} of the pipe functor has a more intuitive interpretation in terms of the Reshetikhin--Turaev defect TQFT as depicted in Figure~\ref{fig:pipe_strand} below.
We review this in more detail later in Section~\ref{subsec:RT_defect_TQFT}.
More generally, pipe functors can be analogously defined for arbitrary 3-dimensional defect TQFTs and the corresponding notion of an orbifold datum in it.
We do this in Appendix~\ref{app:pipe_functors}.
\end{remark}

\section{Reshetikhin--Turaev theory}
\label{sec:RT_theory}

We review the definitions and some properties of three variants of Reshetikhin--Turaev type TQFTs: 
the graph TQFT (Section~\ref{subsec:RT_graph_TQFT}), the defect TQFT (Section~\ref{subsec:RT_defect_TQFT}), and the orbifold graph TQFT (Section~\ref{subsec:RT_orbifold_graph_TQFT}).
The latter is an explicit example of the orbifold graph TQFTs introduced in~\cite{CMRSS1}.

\subsection{Reshetikhin--Turaev graph TQFT}
\label{subsec:RT_graph_TQFT}

We fix a modular fusion category $\CC$. The Reshetikhin--Turaev graph TQFT is a symmetric monoidal functor 
\be 
\zrt \colon \Bordriben{3}(\CC) \lra \Vect ~,
\ee 
and we give a brief description of the source category and of the construction of $\zrt$ below. 
A detailed description can be found in \cite[Ch.\,IV]{turaevbook}.

\subsubsection*{Bordisms with ribbon graphs}

We use the terminology and conventions of
\cite[Sect.\,2.3\,\&\,3.2]{CMRSS1} (adapted from \cite{TVireBook})
in which the category $\Bordribn{3}(\CC)$ is said to have \textsl{$\mcC$-coloured punctured surfaces} as objects and \textsl{$\mcC$-coloured ribbon bordisms} as morphisms.
A $\mcC$-coloured ribbon bordism is a pair $(M,\mcR)$ consisting of an oriented $3$-dimensional bordism $M$ and an embedded $\mcC$-coloured ribbon graph $\mcR$.
A $\mcC$-coloured punctured surface is an oriented $2$-dimension manifold with a finite set of distinguished points called \textsl{punctures} (or \textsl{marked points}).
The punctures serve as possible endpoints of strands of embedded ribbon graphs and are labelled by triples $(X,v,\epsilon)$, where $X\in\mcC$ would label the adjacent strand, $v$ is a tangent vector to account for its framing, and $\epsilon$ is a sign indicating its direction.

\medskip 

The \RT\ TQFT~$\zrt$ acts on an extension $\Bordriben{3}(\CC)$ of the category $\Bordribn{3}(\CC)$, which is defined as follows. 
Objects of $\Bordriben{3}(\CC)$ are pairs $(\Sigma,\lambda)$, where~$\Sigma$ is a $\CC$-coloured punctured surface and $\lambda\subset H_1(\Sigma,\R)$ is a Lagrangian subspace. 
A morphism $(\Sigma,\lambda) \lra (\Sigma',\lambda')$ in $\Bordriben{3}(\CC)$ is a pair $(M,n)$, where $M\colon\Sigma\lra\Sigma'$ is a morphism in $\Bordribn{3}(\CC)$, and $n\in\Z$. 
The unit of an object $(\Sigma,\lambda)$ is the cylinder $(\Sigma \times [0,1], 0)$.
The Lagrangian subspaces are only needed for composition: 
for $(M,n)\colon (\Sigma,\lambda) \lra (\Sigma',\lambda')$ and $(M',n')\colon(\Sigma',\lambda') \lra (\Sigma'',\lambda'')$, their composition in $\Bordriben{3}(\CC)$ is given by
\be
\label{eq:Bordhat_composition}
(M',n') \circ (M,n) 
= 
\Big( M' \circ M, n+n' - \mu\big( M_*(\lambda), \lambda', {M'}^*(\lambda'') \big) \Big)
\, , 
\ee 
where~$\mu$ denotes the Maslov index of the corresponding Lagrangian subspaces associated to~$\Sigma'$, while $M_*$ and~$M'^*$ denote the Lagrangian actions induced by~$M$ and~$M'$ on $H_1(\Sigma,\R)$ and $H_1(\Sigma'',\R)$, respectively, see \cite[Sect.\,IV.3\,\&\,IV.4]{turaevbook}. 
The monoidal product of $\Bordriben{3}(\CC)$ is given by disjoint union, where for bordisms the integers are added. The symmetric structure of $\Bordribn{3}(\CC)$ is analogous to that of $\Bordriben{3}(\CC)$.

\subsubsection*{Invariants of closed manifolds}
Since~$\CC$ is a ribbon category, one has a functor $F_\CC\colon \textrm{Rib}_\CC \lra \CC$, where $\textrm{Rib}_\CC$ is the category of $\CC$-coloured ribbon graphs.
In particular, for every closed $\CC$-coloured ribbon graph $\mathcal R\colon \varnothing\lra\varnothing$ one obtains a scalar invariant $F_\CC(\mathcal R) \in \End(\one) \cong \Bbbk$.

\medskip

Recall, e.\,g.\ from \cite[Sect.\,II.2.1]{turaevbook}, that all compact connected closed oriented $3$-manifolds can be obtained by performing surgery along an (uncoloured and undirected) ribbon link in $S^3$.
The procedure is the following:
Given a ribbon link $L = L_1 \sqcup \dots \sqcup L_k \subseteq S^3$, let $U\subseteq S^3$ be a tubular neighbourhood of $L$ with disjoint components $U_i$, such that $L_i$ lies in the centre of $U_i$.
Let $\g_i$ be a curve on the boundary of the
closure of $U_i$ induced by the framing of the component $L_i$ and let
\begin{equation}
\varphi: \bigsqcup_i S^1 \times S^1 \lra -\pd(S^3 \setminus U)
\end{equation}
be a diffeomorphism, such that $\varphi(S^1 \times \{1\}) = \g_i$.
The manifold
\begin{equation}
M := \left( \bigsqcup_i B^2 \times S^1 \right) \cup_\varphi S^3\setminus U
\end{equation}
is then said to be obtained by surgery on $S^3$ along $L$.
In other words, $M$ is obtained from $S^3$ by cutting out solid tori and glueing them back via diffeomorphisms of their boundaries.
We note that it is possible for different links to yield the same manifold, e.\,g.\ $S^3$ can be obtained both from the empty link or from the link with one component which has a single twist.
In general, two links yield the same 3-manifold iff they are related by a finite sequence of the so-called Kirby moves, see e.\,g.\ \cite[Sect.\,II.3.1]{turaevbook}.

\medskip

If~$M$ is presented by surgery along a framed link $L\subset S^3$, we obtain a $\CC$-coloured ribbon graph $\mathcal L_\CC$ in~$S^3$ by labelling each component of~$L$ with the object
\be
\label{eq:Kirby_obj}
K_\CC := \bigoplus_{i\in\textrm{Irr}_{\CC}} i \in \CC \, , 
\ee 
and adding a single vertex labelled by the morphism 
\begin{equation}
\label{eq:Kirby_col}
\xi_\CC  := \bigoplus_{i\in\textrm{Irr}{\CC}} \dim_\CC(i) \cdot \id_i \colon K_\CC \lra K_\CC 
\end{equation}
on each component circle. From~$\mathcal L_\CC$ one then obtains the invariant (see~\cite[Thm.\,II.2.2.2]{turaevbook})
\begin{equation}
\label{eq:tau_M}
\tau(M) := \delta_\CC^{-\sigma(L)} \cdot \mathscr D_\CC^{-|L|-1} \cdot F_\CC(\mathcal L_\CC) \, , 
\end{equation}
where $\sigma(L)$ is the signature of the linking number matrix of~$L$, $|L|$ is the number of components of $L$, and the scalars~$\delta_\CC$ and~$\mathscr D_\CC$ are as in~\eqref{eq:DC} and~\eqref{eq:anomaly}, respectively. 

The invariant~$\tau$ is readily lifted to extended $\CC$-coloured ribbon bordisms of the form $(M,n)\colon \varnothing\lra\varnothing$. 
Indeed, if~$M$ comes with an embedded $\CC$-coloured ribbon graph~$\mathcal R$, then the same formula as in~\eqref{eq:tau_M} applies except for an additional anomaly factor~$\delta_\CC^n$, and the evaluation functor~$F_\CC$ is instead applied to the $\CC$-coloured ribbon graph obtained by combining~$\mathcal L_\CC$ and~$\mathcal R$, which by abuse of notation we denote $\mathcal L_\CC \sqcup \mathcal R$ (even though~$\mathcal L_\CC$ and~$\mathcal R$ may be non-trivially entangled): 
\begin{equation}
\label{eq:tau_Mn}
\tau(M,n) := \delta_\CC^{n-\sigma(L)} \cdot \mathscr D_\CC^{-|L|-1} \cdot F_\CC(\mathcal L_\CC \sqcup \mathcal R) \, ,
\end{equation}
see \cite[Thm.\,II.2.3.2]{turaevbook} and \cite[Sect.\,IV.9.2]{turaevbook} for more details.

\subsubsection*{Graph TQFT}

We are now ready to define the Reshetikhin--Turaev graph TQFT~$\zrt$. 

For a given object $\Sigma = (\Sigma,\lambda) \in \Bordriben{3}(\CC)$, let $\mathcal{T}(\Sigma)$ (respectively $\mathcal{T}'(\Sigma)$) be the $\Bbbk$-vector space freely generated by extended $\CC$-coloured ribbon bordisms of the form $\varnothing \lra \Sigma$ (respectively $\Sigma \lra \varnothing$). 
In terms of the pairing 
\begin{align}
\beta_\Sigma \colon  \mathcal{T}'(\Sigma) \otimes_\Bbbk \mathcal{T}(\Sigma) & \lra \Bbbk 
\nonumber
\\
M' \otimes_\Bbbk M & \lmt  \tau(M' \circ M)
\end{align}
one defines the state space associated to~$\Sigma$ to be 
\begin{equation}\label{eq:zrt-state-space-quotient}
\zrt(\Sigma) := \mathcal{T}(\Sigma) \big/ \textrm{rad}_{\textrm{r}} \beta_\Sigma \, , 
\end{equation}
where $\textrm{rad}_{\textrm{r}} \beta_\Sigma$ is the right radical of the pairing $\beta_\Sigma$, consisting of those elements $v\in\mathcal T(\Sigma)$ with $\beta_\Sigma(w,v) = 0$ for all $w\in \mathcal T'(\Sigma)$.

For a morphism $M = [(M,n) \colon \Sigma \lra \Sigma']$ in $\Bordriben{3}(\CC)$, the action of~$\zrt$ is defined as  
\begin{align}
\zrt(M) \colon \zrt(\Sigma) & \lra \zrt(\Sigma')
\nonumber 
\\ \label{eq:zrt-functor}
\big[\varnothing \stackrel{H}{\lra} \Sigma \big] &  \lmt  \big[\varnothing \stackrel{H}{\lra} \Sigma \stackrel{M}{\lra} \Sigma'\big] \, . 
\end{align}

\begin{remark}
\label{remark:univ-constr}
	\begin{enumerate}
		\item
		The definition above uses the universal construction of \cite{BHMV}.
		It does not in general guarantee that the functor obtained this way has a symmetric monoidal structure, but this was proven in \cite{DGGPR} for a generalisation of the \RT\ TQFT obtained from a not necessarily semisimple modular tensor category. 
		This generalisation yields the above construction in the semisimple case.
		The proof of monoidality fails if the objects of $\Bordribn{3}(\CC)$ are not equipped with Lagrangian subspaces, so the use of the extended category $\Bordriben{3}(\CC)$ is necessary (see \cite{GW} for more details on this).
		\item
		The original definition of the \RT\ TQFT in \cite[Ch.\,IV]{turaevbook} gives a symmetric monoidal functor $\Bordriben{3}(\CC)\lra\Vect$, but it does not use the universal construction, so a priori it is not clear that the TQFT of \cite{turaevbook} and~$\zrt$ as defined above are isomorphic.
		However this follows from Lemma~\ref{lem:funct_iso} below, as the two TQFTs give the same invariants on closed manifolds (cf.\ \cite[Thm.\,II.2.3.2]{turaevbook}) and have isomorphic state spaces (compare \cite[(IV.\,1.4.a)]{turaevbook} and \cite[Prop.\,4.16]{DGGPR}).
	\end{enumerate}
\end{remark}

Next we summarise some properties of $\zrt$ that will be needed below.

\begin{property}
\label{ZRTProp:regular}
From the definition it follows that the graph TQFT $\zrt$ is linear with respect to addition and scalar multiplication of coupons of embedded ribbon graphs.
Moreover, upon evaluation one can compose the coupons as well as remove a coupon labelled with an identity morphism, i.\,e.\ the following exchanges are allowed:
\begin{equation}\label{eq:regular-TQFT-def}
\pic[1.25]{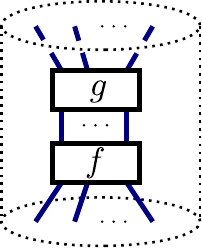}
\leftrightarrow
\pic[1.25]{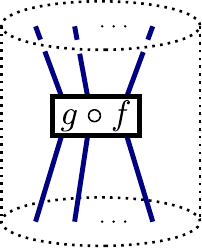} , \; 
\pic[1.25]{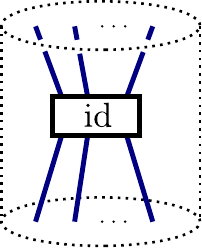}
\leftrightarrow
\pic[1.25]{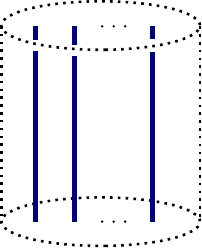}.
\end{equation}
By the terminology of \cite[Sect.\,15.2.3]{TVireBook}, graph TQFTs with this property are called \textsl{regular}. In addition, one can replace crossings etc.\ by the corresponding morphisms, in other words, one can perform graphical calculus on the embedded ribbon graphs.
For example, if some part $\mcR'$ of a ribbon graph $\mcR$ in a $3$-bordism $M$ is contained in the interior of an embedded closed 3-ball, then 
it can be exchanged for a single coupon labelled with $F_\mcC(\mcR')\in\End_\mcC(\opid)$ without changing the value of $\zrt$.
\end{property}
\begin{property}
\label{zrt:coupons_in_spheres}
Let $S^2_P \in \Bordriben{3}(\mcC)$ be a $2$-sphere with $n$ negatively oriented punctures labelled by objects $X_1,\dots,X_n\in\mcC$ and $m$ positively oriented punctures labelled by $Y_1,\dots,Y_m\in\mcC$.
One has the following isomorphism of vector spaces:
\begin{equation}
\label{eq:RTSPhereSpaces}
\begin{array}{c}
\mcC(X_1\otimes\cdots\otimes X_n, Y_1\otimes\cdots\otimes Y_m) \cong \zrt(S^2_P)\vspace{0.5cm}\\
f  \lmt  [B_f\colon\varnothing\lra S^2_P]
\end{array} \;\; \text{ where }
B_f = \pic[1.25]{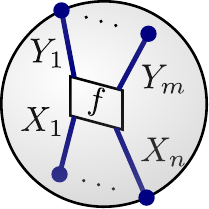}
\end{equation}
is a solid ball with an $f$-labelled coupon.
This follows from the description of the TQFT via the universal construction, see Remark \ref{remark:univ-constr}.
In general, the state space of any punctured surface is spanned by solid handlebodies with embedded ribbon graphs.
\end{property}
\begin{property}
The invariants of some simple examples of closed $3$-manifolds are:
\begin{align}
\label{eq:ZRTCS3}
\zrt(S^3,0) = \mathscr{D}_\mcC^{-1} \, , \qquad
\zrt(S^2 \times S^1,0) = 1 \, ,\\
\zrt(S^1 \times S^1 \times S^1,0) = |\operatorname{Irr}_\mcC| 
~ \in \Bbbk \, .
\end{align}
The $S^3$-invariant allows one to rewrite the formula \eqref{eq:tau_Mn} for the invariant of a ribbon $3$-manifold $(M,\mcR,n)$ represented by the surgery link $L\sqcup\mcR$ in $S^3$ as
\begin{equation}
\label{eq:M_inv_as_S3_inv}
\zrt(M,\mcR,n) = \delta_\CC^{n-\sigma(L)} \cdot \mathscr D_\CC^{-|L|} \cdot \zrt(S^3, \mcL_\mcC \sqcup \mcR, 0) \, .
\end{equation}
The other two invariants are simply instances of a more general observation:
Due to the monoidal functoriality of $\zrt$ (or in fact any graph TQFT), for any surface $\Sigma\in\Bordriben{3}(\mcC)$ the invariant of $\Sigma\times S^1$ is the trace of the identity map on $\zrt(\Sigma)$.
If $\operatorname{char} \Bbbk = 0$, it is equal to the dimension of the state space of $\Sigma$, otherwise to the dimension modulo $\operatorname{char} \Bbbk$, i.\,e.
\begin{equation}
\label{eq:dim_ss_as_invariant}
\zrt(\Sigma \times S^1,0) = \dim \zrt(\Sigma) 
~ \in \Bbbk \, .
\end{equation}
We have already seen explicitly in~\eqref{eq:RTSPhereSpaces} that $\dim \zrt(S^2) = 1$, and the state space of a punctureless torus $S^1\times S^1$ has a basis consisting of solid tori with non-contractible loops labelled by  $i\in\operatorname{Irr}_\mcC$.
\end{property}

\subsection{Reshetikhin--Turaev TQFT with line and surface defects}
\label{subsec:RT_defect_TQFT}
In this section we outline the construction of a defect TQFT 
\be 
\zzc \colon \Borddefen{3}(\D^{\CC}) \lra \Vect \, , 
\ee 
which lifts the \RT\ TQFT~$\zrt$ reviewed in Section~\ref{subsec:RT_graph_TQFT} to decorated bordisms with embedded ribbons as well as embedded surfaces. 
The details of the definition of~$\zzc$ are the main content of \cite{CRS2}.

\subsubsection*{Defect bordisms for Reshetikhin--Turaev TQFT}
Let us briefly describe the source category $\Borddefen{3}(\D^{\CC})$ of the defect TQFT $\zzc$.

For details on defect bordisms and TQFTs we refer to \cite{CMS, CRS1} and the summary in \cite[Sect.\,3]{CMRSS1}.
Here we just mention that the objects and morphisms of the category $\Borddef_3(\D)$ of \textsl{defect surfaces} and \textsl{defect bordisms} are stratified surfaces and $3$-dimensional bordisms, whose strata are oriented 
and carry labels from a so-called set of \textsl{defect data} $\D$.
The latter consists of sets $D_0, D_1, D_2, D_3$ where elements of $D_j$ are used to label the $j$-dimensional strata of defect bordisms, as well as information about which labels can be assigned to adjacent strata.
For every defect TQFT $\zz\colon\Borddef_3(\D)\lra\Vect$ there is the so-called $D_0$-completion, where one expands the set $D_0$ by labelling the $0$-strata with elements of state spaces of small stratified $2$-spheres surrounding them, see \cite[Sect.\,2.4]{CRS1}.

\medskip

For a modular fusion category $\CC$, there is a set of defect data~$\D^\CC$ with 
\begin{itemize}
\item $D^\mcC_3$: $3$-strata have no labels, or equivalently they all carry the same label $\mcC$.
\item $D^\mcC_2$: $2$-strata are labelled by $\Delta$-separable symmetric Frobenius algebras in~$\mcC$.
\item $D^\mcC_1$: 
all $1$-strata are assigned a framing.
A $1$-stratum that has no adjacent $2$-strata is labelled by an object of $\mcC$.

Suppose a $1$-stratum $L$ has $n>0$ adjacent $2$-strata. 
Let the labels of them be $\Delta$-separable symmetric Frobenius algebras $A_1,\dots, A_n$.
We then label $L$ with an $A_1$-$\cdots$-$A_n$-\textsl{multimodule} $M\in\mcC$, i.\,e.\ a module over 
$A_1 \otimes \cdots \otimes A_n$, or equivalently a module over all the algebras $A_i$ simultaneously such that for $i\neq j$ the $A_i$- and $A_j$-actions commute with respect to the braiding (as in \eqref{eq:AA_actions_commute} for $i=1$, $j=2$).
The framing of an $M$-labelled $1$-stratum will be taken to be the 
vector field normal to the adjacent $2$-stratum with the label $A_1$.
\item $D^\mcC_0$: $0$-strata are added by the usual completion procedure mentioned above.
\end{itemize}

\begin{remark}
\begin{enumerate}[(i)]
\item
In \cite{CRS2} a slightly different set of labels $D_1$ for $1$-strata is used.
In particular, the multimodules are equipped with a so-called \textsl{cyclic structure}, the purpose of which is to eliminate the need for $1$-strata to have framings.
\item
The graph TQFT $\zrt$ from Section~\ref{subsec:RT_graph_TQFT} is a special case of of the defect TQFT $\zzc$ where only line defects are allowed, see~\cite[Rem.\,5.9\,(i)]{CRS2}, \cite[Sect.\,4.2]{CMS}.
\item
It is possible to further generalise the defect TQFT $\zzc$ so that the 3-strata can be labelled by different modular fusion categories, see~\cite{KMRS}.
\end{enumerate}
\end{remark}

The extension $\Borddefen{3}(\D^\CC)$ relates to $\Borddefen{3}(\D^\CC)$ in the same way as $\Bordriben{3}(\CC)$ relates to $\Bordriben{3}(\CC)$, i.\,e.\ the objects (defect surfaces) are paired with Lagrangian subspaces, and the morphisms (defect bordisms) are paired with integers so that the composition is as in \eqref{eq:Bordhat_composition}.
For convenience we will often suppress these extra data of the extension below, as they only interact trivially with our arguments.

\subsubsection*{Ribbonisation}
We will define the \RT\ defect TQFT~$\zzc$ in terms of the ``ribbonisation'' map 
\be
\label{eq:ribbonisation}
R \colon \Borddefen{3}(\D^{\CC}) \lra \Bordriben{3}(\CC) \, ,
\ee 
which, as will momentarily become apparent, is not a functor since it does not preserve identity morphisms and depends on auxiliary data.

We use the terminology of \cite{CMRSS1} where a \textsl{skeleton} (here also \textsl{$1$-skeleton}) of an oriented $2$-manifold $F$ is a stratification, dividing $F$ into a finite collection of open discs and (in case $F$ has boundary) half-discs, and each $0$-stratum has exactly three adjacent $1$-strata.
A skeleton is then called \textsl{admissible} if its $0$- and $1$-strata are oriented by a \textsl{local order} on the $2$-strata, meaning that the $1$-strata adjacent to a $0$-stratum cannot all be directed towards or away from it.
A choice of a skeleton on $F$ induces a stratification on the boundary $\pd F$, consisting of a finite set of oriented points dividing it into a collection of open intervals; we will call such a stratification a \textsl{$0$-skeleton} of $\pd F$.
Any $0$-skeleton on $\pd F$ can be extended to a $1$-skeleton on $F$.

\medskip

Let $\Sigma\in\Borddefen{3}(\D^{\CC})$ be a defect surface.
Pick a set $\tau$ of $0$-skeleta for the $1$-strata of $\Sigma$.
We define 
\begin{equation}
    R(\Sigma,\tau) \in \Bordriben{3} (\CC)
\end{equation}
to be the underlying oriented surface $\Sigma$ with the set of punctures $\Sigma^{(0)} \cup \bigcup_L \tau(L)$, where $\Sigma^{(0)}$ is the set of $0$-strata of $\Sigma$ and $L$ runs over the $1$-strata of $\Sigma$, 
and $\tau(L)$ is the set of vertices in the chosen 0-skeleton of~$L$. 
For a $1$-stratum $L\subseteq\Sigma$ labelled with $A\in D^\mcC_2$, each point $p\in\tau(L)$ is made into a puncture by decorating it with triple $(A,v,\epsilon)$, where $v\in T_p\Sigma$ is a tangent vector normal to the orientation of $L$, and $\epsilon$ is the orientation of $p$.

\medskip

For a defect bordism $[(M,n)\colon\Sigma\lra\Sigma']\in\Borddefen{3}(\D^{\CC})$, let $t$ be a set of $1$-skeleta for the $2$-strata of $\Sigma$ which restricts to sets of $0$-skeleta $\tau, \tau'$ for the $1$-strata of $\Sigma, \Sigma'$.
We define
\begin{equation}
    \big[R(M, t)\colon R(\Sigma,\tau) \lra R(\Sigma',\tau')\big] \in \Bordriben{3} (\CC)
\end{equation}
to be the underlying $3$-manifold $M$ with an embedded ribbon graph $R$ obtained as follows: the strands are the (already framed, oriented and labelled) $1$-strata of $M$ and the $1$-strata of the skeleta $t(F)$ for each $2$-stratum $F\subseteq M$, framed by tangent vectors normal to $F$ and labelled by the same algebra object $A\in D^\mcC_2$ as $F$.
The $0$-strata of $t(F)$ are then replaced by coupons labelled with the (co)multiplication of $A$, while the intersection points of $t(F)$ with a $1$-stratum $L$ of $M$ are labelled by the appropriate (co)action of $A$ on the multimodule labelling $L$.

\subsubsection*{\RT\ defect TQFT} 

Let~$\Sigma$ be an object in $\Borddefen{3}(\D^\CC)$, and let $\tau,\tau'$ be two sets of 0-skeleta for its 1-strata.
Consider the defect cylinder $C_\Sigma := (\Sigma \times [0,1], 0)$.
For any choice of a set $t$ of $1$-skeleta for the $2$-strata of $C_\Sigma$ which restricts to $\tau$, $\tau'$ on the boundary, we obtain a linear map 
\be 
\Phi_\tau^{\tau'} := \Big[\zrt \big( R(C_\Sigma,t) \big) \colon \zrt \big( R(\Sigma,\tau) \big) \lra \zrt\big( R(\Sigma,\tau') \big)\Big] \,.
\ee 
By the properties of $\Delta$-separable symmetric Frobenius algebras, $\Phi_\tau^{\tau'}$ depends only on $\tau, \tau'$ and not on the choice of $t$ in the interior of $C_\Sigma$ (see e.\,g.\ \cite[Sect.\,5.1]{tft1}).
This implies
\be
\Phi_{\tau'}^{\tau''} \circ \Phi_{\tau}^{\tau'} = \Phi_{\tau}^{\tau''} \, ,
\ee 
and in particular that each map $\Phi_{\tau}^{\tau}$ is an idempotent.

\medskip

With this preparation, we can now reformulate the construction in \cite[Thm.\,5.8\,\&\,Rem.\,5.9]{CRS2}:

\begin{construction}
\label{constr:DefectRT}
Let~$\mathcal C$ be a modular fusion category. 
The \RT\ defect TQFT 
\be 
\label{eq:zzcNoHat}
\zzc \colon \Borddefen{3}(\D^\CC) \lra \Vect
\ee  
is defined as follows: 
\begin{enumerate}
\item 
For an object $\Sigma \in \Borddefen{3}(\D^\CC)$, we set 
\be 
\zzc(\Sigma) = \textrm{colim} \big\{ \Phi_{\tau}^{\tau'} \big\} \, , 
\ee 
where $\tau, \tau'$ range over all sets of $1$-skeleta for the 1-strata of $\Sigma$. 
\item 
For a morphism $M \colon \Sigma \lra \Sigma'$ in $\Borddefn{3}(\D^\CC)$, we set $\zzc (M)$ to be 
\be 
\label{eq:ZConMorphisms}
\begin{tikzcd}
\zzc(\Sigma) \arrow[hook]{r} 
& \zrt \big( R(\Sigma, \tau) \big) \arrow{rr}{\zrt( R(M,t) )}
& 
& \zrt\big(  R(\Sigma', \tau´') \big) \arrow[two heads]{r}
& \zzc(\Sigma') \, , 
\end{tikzcd}  
\ee 
where $t$ is an arbitrary set of $1$-skeleta for the $2$-strata of $M$ that restricts to sets of $0$-skeleta $\tau$ and $\tau'$ for the $1$-strata of $\Sigma$ and $\Sigma'$, respectively. The injection is part of the universal property of the colimit, and the surjection is part of its data.
\end{enumerate}
\end{construction}

Note that the definition of $\zzc$ in~\eqref{eq:ZConMorphisms} does not depend on the choice $t$ of $1$-skeleta. 
Moreover, by construction the state spaces of~$\zzc$ are isomorphic to the images of the idempotents $\Phi_\tau^\tau$, 
\be
\label{eq:ZC_sos_as_im}
\zzc(\Sigma) \cong \im \Phi_{\tau}^{\tau}  \, . 
\ee 

Next we collect some properties of~$\zzc$ that we will need in Section~\ref{sec:proof}.

\begin{figure}
	\captionsetup{format=plain}
	\centering
	\begin{subfigure}[b]{0.32\textwidth}
		\centering
		\pic[1.25]{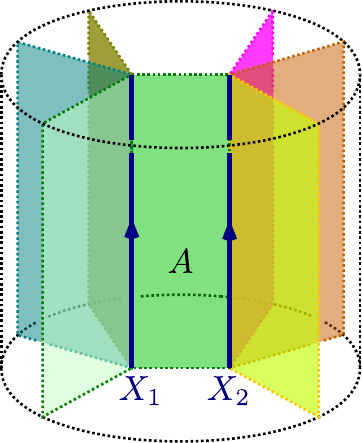}
		\caption{}
		\label{fig:local_neigh_two_lines}
	\end{subfigure}
	\begin{subfigure}[b]{0.32\textwidth}
		\centering
		\pic[1.25]{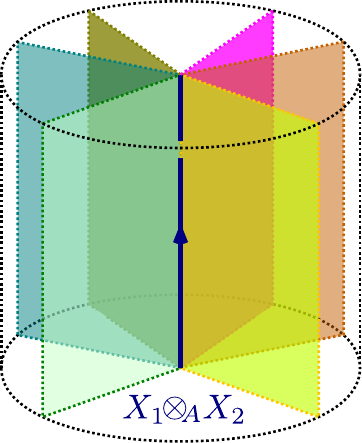}
		\caption{}
		\label{fig:local_neigh_fused_lines}
	\end{subfigure}
	\begin{subfigure}[b]{0.32\textwidth}
		\centering
		\pic[1.25]{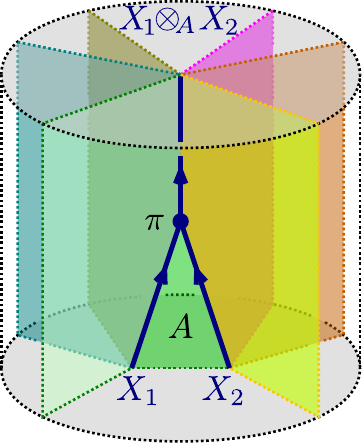}
		\caption{}
		\label{fig:local_neigh_two_lines_fusion_cyl}
	\end{subfigure}
	\caption{
(a) Part of a defect bordism containing two parallel line defects labelled by $X_1, X_2$. (b) The line defects can be fused to a single line defect labelled by $X_1 \otimes_A X_2$. (c) Evaluating the cylinder containing the projection morphism $\pi\colon X_1\otimes X_2 \lra X_1 \otimes_A X_2$ with $\zzc$  provides an isomorphism between the two state spaces. Its inverse is obtained from a similar cylinder containing the inclusion $\imath\colon X_1 \otimes_A X_2 \lra X_1 \otimes X_2$.
	}
	\label{fig:line_fusion}
\end{figure}
\begin{figure}
	\captionsetup{format=plain}
	\centering
	\begin{subfigure}[b]{0.32\textwidth}
		\centering
		\pic[1.25]{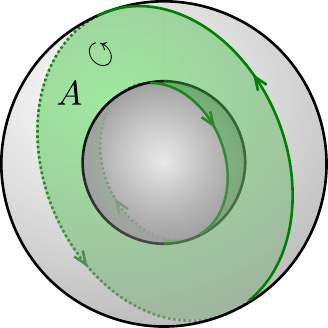}
		\caption{}
		\label{fig:S2wloop_cylinder}
	\end{subfigure}
	\begin{subfigure}[b]{0.32\textwidth}
		\centering
		\pic[1.25]{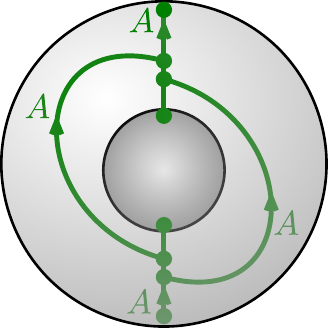}
		\caption{}
		\label{fig:S2wloop_cylinder_rib}
	\end{subfigure}
	\begin{subfigure}[b]{0.32\textwidth}
		\centering
		\pic[1.25]{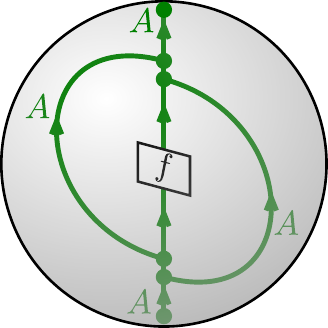}
		\caption{}
		\label{fig:S2wloop_cylinder_rib_img}
	\end{subfigure}
	\caption{%
	     (a) A cylinder over a defect sphere~$S$ with a single $A$-labelled 1-stratum. 
	     (b) A choice or ribbonisation of the cylinder. 
	     (c) An element of the state space $\zzc(S)$ is obtained from $f\in\mcC(A,A)$ by applying the projection $\mcC(A,A) \lra \mcACA(A,A)$, which sends~$f$ to the string diagram displayed in the picture. 
	}
	\label{fig:S2wloop_space}
\end{figure}
\begin{figure}
	\captionsetup{format=plain}
	\begin{equation*}
	\zzc\left( \pic[1.25]{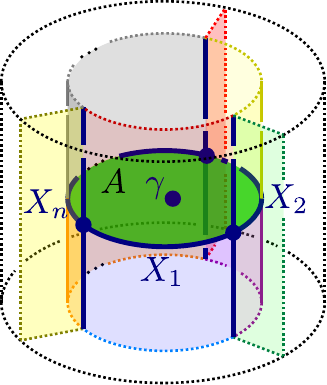} \right) =
    \zzc\left( \pic[1.25]{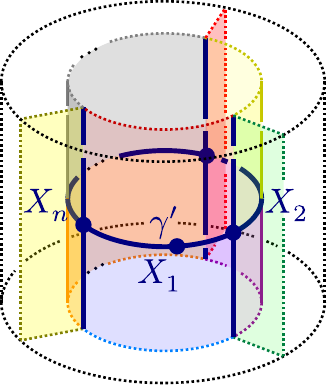} \right) ~.
	\end{equation*}
	\caption{
	     Upon evaluation with~$\zzc$, contractible $A$-labelled 2-strata~$F$ in the interior
	     can be removed by forgetting all $A$-actions on the multimodules~$X_i$ which decorate adjacent 1-strata. 
	     If~$F$ is punctured by a $\gamma$-labelled 0-stratum~$p$, then both~$F$ and~$p$ can be removed at the cost of puncturing one of the 1-strata adjacent to~$F$ with a new 0-stratum whose label~$\gamma'$ is obtained from~$\gamma$ and the $A$-action.
	}
	\label{fig:disc_remove}
\end{figure}

\begin{property}
Let $M$ be a defect bordism, and let $L_1, L_2$ be two parallel $1$-strata, connected by a $2$-stratum $F$.
Denote the multimodule which labels $L_i$ by $X_i$ and the algebra which labels $F$ by $A$ (see Figure \ref{fig:local_neigh_two_lines}).
Upon ribbonisation, $F$ gets replaced by a ribbon graph connecting $X_1$ and $X_2$, which upon evaluation with $\zrt$ can be replaced by the idempotent \eqref{eq:prod_over_A_proj}, projecting $X_1\otimes X_2$ onto $X_1 \otimes_A X_2$.
The same result can be obtained by replacing $L_1, L_2$ and $F$ with a single $1$-stratum labelled with $X_1 \otimes_A X_2$ (see Figure \ref{fig:local_neigh_fused_lines}).
If $L_1, L_2$ end on the boundary or on $0$-strata in $M$, the endpoints also need to be relabelled.
In this case, a cylinder as in Figure \ref{fig:local_neigh_two_lines_fusion_cyl} provides an isomorphism between the surfaces with different decorations.
\end{property}

\begin{property}
\label{ZCProp:compose_0-strata}
The set of labels for $0$-strata lying in a $2$-stratum labelled with an algebra $A\in D^\mcC_2$ is given by bimodule maps $\mcACA(A,A)$:
By definition it is the vector space assigned to a stratified $2$-sphere surrounding the $0$-stratum and having a single $A$-coloured loop.
Its cylinder and a choice for its ribbonisation are depicted in Figures \ref{fig:S2wloop_cylinder} and \ref{fig:S2wloop_cylinder_rib}.
By Property~\ref{zrt:coupons_in_spheres}, the Reshetikhin--Turaev TQFT assigns the space $\mcC(A,A)$ to the $2$-sphere with two $A$-coloured punctures such that an element $f\in\mcC(A,A)$ corresponds to a closed ball with an embedded $f$-labelled coupon and two $A$-strands.
The image of $f$ is then a closed ball with an embedded ribbon graph as in Figure \ref{fig:S2wloop_cylinder_rib_img}.
The two $A$-lines on the sides of the coupon act precisely as the projection $\mcC(A,A) \lra \mcACA(A,A)$.

\smallskip

\noindent
An analogous argument for $0$-strata separating two $1$-strata labelled with multimodules $X,Y$ yields that they are labelled with multimodule morphisms $X\lra Y$ (see \cite[Lem.\,3.1]{CRS3}).
This also implies that two $0$-strata which are connected by a $1$-stratum can be composed, e.\,g.\ for three multimodules $X, Y, Z$ and multimodule morphisms $f\colon X\lra Y$, $g \colon Y \lra Z$ one has
\begin{equation}
\zzc\left( \pic[1.25]{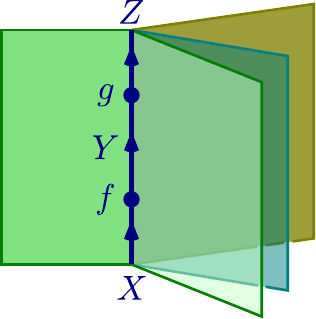} \right) =
\zzc\left( \pic[1.25]{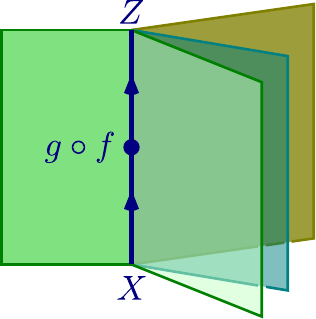} \right).
\end{equation}
Similarly, the $0$-strata within the same $2$-stratum can also be composed.
\end{property}

\begin{property}
\label{prop:GetRidOfDiscs}
Let $F$ be a $2$-stratum labelled with an algebra $A\in D^\mcC_2$, which is bounded by $1$-strata labelled by multimodules $X_1, X_2,\dots, X_n$ (and by some $0$-strata between them). Let~$p$ be a 0-stratum adjacent to no 2-stratum other than~$F$,
labelled by a bimodule morphism $\gamma\colon A\lra A$. 
Assume that $F\cup p$ is contractible, as illustrated on the left-hand side of Figure~\ref{fig:disc_remove}. 
Then, upon evaluation with $\zzc$, $F$ can be removed with $p$ pushed aside to form a $0$-stratum on one of the bounding 1-strata, for example the one labelled with $X_i$, and relabelled with the $\g'\colon X_i\lra X_i$ where $\g' = \g_{\textrm{l}}$ or $\g'=\g_{\textrm{r}}$ (recall the notation \eqref{eq:psi_omega_notation}), depending on which action of $A$ on $X_i$ is used.
The action of $A$ on $X_1, X_2, \dots, X_n$ is forgotten afterwards to make them into valid labels once $F$ is removed.
This is summarised in Figure~\ref{fig:disc_remove}, where $i=1$.
To show this identity, one uses the properties of $\Delta$-separable symmetric Frobenius algebras and their modules to simplify the ribbon graph that replaces $F$ after ribbonisation.
\end{property}

\subsection{Orbifold graph TQFT}
\label{subsec:RT_orbifold_graph_TQFT}
Let $\mcC$ be a modular fusion category, let $\mcA = (A,T,\a,\abar,\psi,\phi)$ be a simple orbifold datum in it, and consider the associated modular fusion category $\mcC_\mcA$ (see Section~\ref{subsubsec:SpecialOrbifoldData}).
We are now ready to define the Reshetikhin--Turaev orbifold graph TQFT\footnote{In the notation of \cite{CMRSS1}, $\zzca$ would be denoted as $(\zz^{\mathcal C})_\A^\Gamma$.}
\begin{equation}
\zzca \colon \Bordriben{3}(\mcC_\mcA) \lra \Vect \, .
\end{equation}
Orbifold graph TQFTs were introduced in \cite{CMRSS1} as graph TQFTs obtained by performing an internal state sum construction in a given $3$-dimensional defect TQFT.
Our construction is a generalisation of the state sum TQFT in \cite{TVireBook} which one recovers in the special case $\mcC=\Vect$.
In this section we specialise the general construction in \cite{CMRSS1} to the case of the Reshetikhin--Turaev defect TQFT $\zzc$ from Section \ref{subsec:RT_defect_TQFT}.
We refer to \cite{CMRSS1} for more details on orbifold graph TQFTs.

\subsubsection*{Orbifold datum as defect labels}
\begin{figure}
	\captionsetup{format=plain}
	\centering
	\pic[1.1]{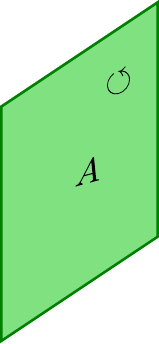} \hspace{-0.5cm}
	\pic[1.1]{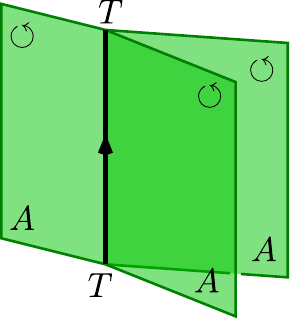} \hspace{-0.5cm}
	\pic[1.1]{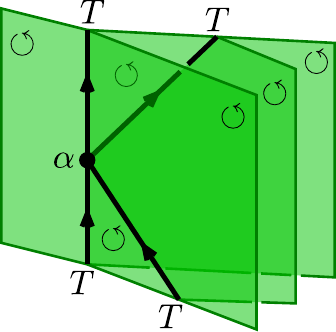} \hspace{-0.5cm}
	\pic[1.1]{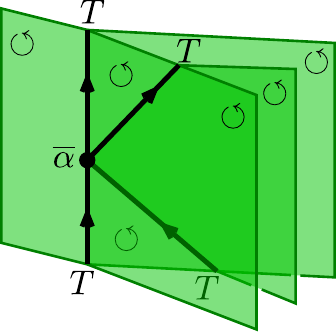}
	\caption{
	Orbifold datum $\mcA=(A,T,\a,\abar,\psi,\phi)$ as defect labels (for pictures involving the point defects~$\psi$ and~$\phi$, see Figure~\ref{fig:SpecialOrbifoldDataPhiPsi}).
	}
	\label{fig:orb_datum}
\end{figure}
The defect TQFT $\zzc$ allows one to interpret the orbifold datum $\mcA$ as a set of distinguished defect labels (see Figure~\ref{fig:orb_datum}):
    The label~$A$ for a $2$-stratum,
    $T$ for a $1$-stratum having three adjacent $2$-strata labelled by~$A$,
    $\a$ and $\abar$ for the ``intersection'' points of four $T$-labelled $1$-strata,
    $\psi$ for $0$-strata on $A$-labelled $2$-strata, and 
    $\phi$ for $0$-strata inside $3$-strata.
The identities in Figure~\ref{fig:SpecialOrbifoldDatum} then can be interpreted graphically as depicted in Figure~\ref{fig:SpecialOrbifoldDataPhiPsi}, where each identity corresponds to a local change on the appropriately labelled defect configuration that can be performed upon evaluation with $\zzc$ without changing the resulting invariant.

\medskip

\begin{figure}
	\captionsetup{format=plain}
	\centering
	\pic[1.1]{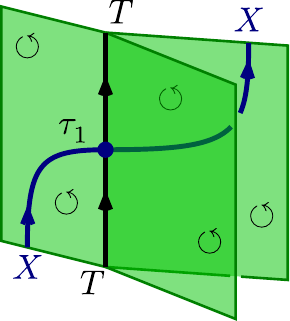} \hspace{-0.5cm}
	\pic[1.1]{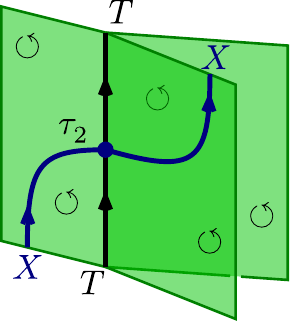} \hspace{-0.5cm}
	\pic[1.1]{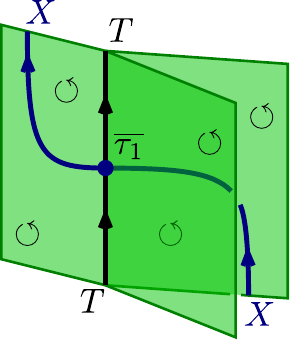} \hspace{-0.5cm}
	\pic[1.1]{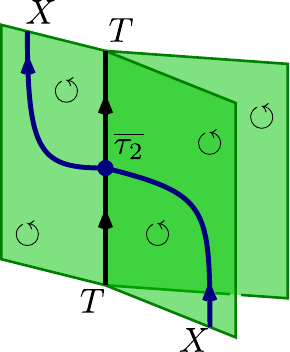}
	\caption{
	An object $(X,\tau_1,\tau_2,\taubar{1},\taubar{2})\in\mcC_\mcA$ as a set of defect labels.
	}
	\label{fig:CA_object}
\end{figure}
An object $(X,\tau_1,\tau_2,\taubar{1},\taubar{2})\in\mcC_\mcA$ can also be interpreted as a collection of defect labels (see Figure \ref{fig:CA_object}):
    The label~$X$ for a $1$-stratum lying within an $A$-labelled $2$-stratum and
    $\tau_i$, $\taubar{i}$ for the ``intersection'' points of $X$- and a $T$-labelled $1$-strata.
The algebraic identities in Figure~\ref{fig:CA_identities} then correspond to changes of defect configurations as in Figure~\ref{fig:CrossingIdentitiesPhiPsi}.

\subsubsection*{Ribbon diagrams and foamification}
The graph TQFT $\zzca$ is based on the ``foamification'' map
\begin{equation}
F \colon \Bordriben{3}(\mcC_\mcA) \lra \Borddefen{3}(\D^{\CC})\, ,
\end{equation}
which is an analogue of the ribbonisation map~\eqref{eq:ribbonisation} used to define the defect TQFT $\zzc$. In particular, $F$ will not be a functor.

\medskip

Let $(M,\mcR)$ be a $3$-manifold with an embedded $\mcC_\mcA$-coloured ribbon graph $\mcR$.
We denote the underlying unlabelled ribbon graph by $R$. The following notions are taken from \cite{CMRSS1} (and are in turn based on the analogous notions in \cite{TVireBook}).
\begin{itemize}[leftmargin=12pt]
\item
An \textsl{admissible $2$-skeleton} for $M$ is a stratification $\operatorname{T}$ dividing $M$ into a finite collection of open balls and (in case $M$ has boundary) half-balls, such that each point $x\in\operatorname{T^{(2)}}\setminus\pd M$ has a neighbourhood isomorphic to one of the stratified spaces depicted in Figure~\ref{fig:orb_datum} (at this point unlabelled).
Here $\operatorname{T}^{(d)}$, $d \in \{0,1,2,3\}$ denotes the union of strata of $\operatorname{T}$ with dimension~$\leqslant d$.
The qualifier ``admissible'' refers to the orientations of the strata, which are as in Figure~\ref{fig:orb_datum} with the points labelled by $\a$/$\abar$ having the orientation $+/-$.
Note that an admissible $2$-skeleton on $M$ induces an admissible $1$-skeleton on $\pd M$.
\item
An \textsl{admissible positive ribbon diagram} 
for $(M,R)$ is an admissible $2$-skeleton $\operatorname{T}$ for $M$ together with an embedding $\imath\colon R\longhookrightarrow \operatorname{T}^{(2)}\subseteq M$, which is isotopic to the identity embedding $R\longhookrightarrow M$ and such that all strands/coupons lie in the $2$-strata with framings induced by their orientations, except for finitely many intersection points between the strands of $\imath(R)$ and $1$-strata, which have neighbourhoods isomorphic to one of the stratified spaces in Figure~\ref{fig:CA_object} (at this point unlabelled).
The qualifier ``positive'' refers to the orientations imposed by Figure~\ref{fig:CA_object}.
\end{itemize}

Since the skeleta and ribbon diagrams we consider below are always admissible (and positive), we omit mentioning ``admissible'' and ``positive'' below.

\begin{itemize}[leftmargin=12pt]
\item
An \textsl{$\mcA$-coloured ribbon diagram} for $(M,\mcR)$ is a ribbon diagram $\mcT=(\operatorname{T},\imath)$ with the following labels assigned:
\begin{itemize}
\item 
$2$- and $1$-strata of $\operatorname{T}$ are labelled with $A$ and $T$, $0$-strata of $\operatorname{T}$ by $\a$ or $\abar$ as in Figure \ref{fig:orb_datum};
\item
the intersection points of the $1$-strata of $\operatorname{T}$ with strand of $\imath(\mcR)$ that is labelled with $(X,\tau_1,\tau_2,\taubar{1},\taubar{2})\in\mcC_\mcA$ are labelled with $\tau_i$ or $\taubar{i}$ as in Figure \ref{fig:CA_object};
\end{itemize}
and the following $0$-strata added:
\begin{itemize}
\item
for a $2$-stratum $F$ of $\operatorname{T}$, each component $F_i$ of $F \setminus \imath(\mcR)$ gets an additional $0$-stratum labelled with $\psi^{\chi_{\operatorname{sym}}(F_i)}$ where
\begin{equation}
\label{eq:symm_Euler_char}
\chi_{\operatorname{sym}}(-) = 2\chi(-) - \chi(\pd-)
\end{equation}
is the symmetric Euler characteristic;
the boundary segments of coupons are treated as boundary components of $F_i$;
\item
the 2-strata adjacent to the left and the right edges of each coupon get an additional $\psi$-insertion;\footnote{As explained in~\cite[App.\,B]{CMRSS1}, these additional $\psi$-insertions are needed so that the coupons of the embedded ribbon graphs can be composed when evaluating with the orbifold graph TQFTs.}
\item
each $3$-stratum $U$ of $\operatorname{T}$ gets an additional $0$-stratum labelled with $\phi^{\chi_{\operatorname{sym}}(U)}$.
\end{itemize}
\end{itemize}
An $\mcA$-coloured ribbon diagram $\mcT=(\operatorname{T},\imath)$ for $(M,\mcR)$ can be seen as a stratification for $M$ consisting of $\operatorname{T}$ together with additional $0$- and $1$-strata obtained from the embedding $\imath(\mcR)$ as well as the $\psi$- and $\phi$-insertions.
On the boundary $\pd M$ it restricts to an \textsl{$\mcA$-coloured $1$-skeleton of punctured surfaces}, i.\,e.\ a $1$-skeleton with additional $0$-strata at the punctures.

\medskip

For a $\mcC_\mcA$-coloured surface $\Sigma\in\Bordriben{3}(\mcC_\mcA)$, pick an $\mcA$-coloured 1-skeleton $\mcG$.
We define the foamification of $\Sigma$, 
\begin{equation}
F(\Sigma, \mcG) \in \Borddefen{3}(\D^{\CC}) \, , 
\end{equation}
to be a defect surface obtained from the underlying surface $\Sigma$ with the stratification and labels as in $\mcG$.

\medskip

For a $\mcC_\mcA$-coloured ribbon bordism $[(M,\mcR)\colon\Sigma\lra\Sigma']\in\Bordriben{3}(\mcC_\mcA)$, let $\mcT$ be an $\mcA$-coloured ribbon diagram.
The foamification 
\begin{equation}
\big[F(M,\mcT)\colon F(\Sigma,\mcG) \lra F(\Sigma',\mcG')\big] \in \Borddefen{3}(\D^{\CC})
\end{equation}
of $M$ is defined to be the defect bordism with the stratification given by $\mcT$ and restricting to the $1$-skeleta $\mcG, \mcG'$ of punctured surfaces on the boundary components $\Sigma, \Sigma'$.

\subsubsection*{\RT\ orbifold graph TQFT}
We define the orbifold graph TQFT $\zzca$ in terms of the Reshetikhin--Turaev defect TQFT $\zzc$ (see Section \ref{subsec:RT_defect_TQFT}) in a way similar to the definition of the latter in terms of the Reshetikhin--Turaev graph TQFT $\zrt$ (see Section \ref{subsec:RT_graph_TQFT}).
The construction is based on the following result (see \cite[Cor.\,4.13]{CMRSS1}).
\begin{theorem}
\label{thm:foamification_indep}
Let $([M,\mcR)\colon\Sigma\lra\Sigma']\in\Bordriben{3}(\mcC_\mcA)$ be a $\mcC$-coloured ribbon bordism and let $\mcS, \mcT$ be two $\mcA$-coloured ribbon diagrams which agree on the boundary components $\Sigma, \Sigma'$.
Then
\begin{equation}
\zzc\big( F(M,\mcS) \big) = \zzc\big( F(M,\mcT) \big) \, .
\end{equation}
\end{theorem}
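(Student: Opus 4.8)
The plan is to establish Theorem~\ref{thm:foamification_indep} by reducing it to a finite list of local moves between $\mcA$-coloured ribbon diagrams and checking each move against the defining identities of the orbifold datum. First I would argue that, since $\mcS$ and $\mcT$ restrict to the same $\mcA$-coloured $1$-skeleton on $\pd M$, the diagram $\mcS$ can be turned into $\mcT$ by a finite sequence of \emph{elementary moves}, each supported inside an embedded ball $B\subseteq M$ with $B\cap\pd M=\varnothing$ and with fixed labelled boundary data on $\pd B$. Such a completeness statement is the analogue, for admissible $2$-skeleta carrying an embedded ribbon graph, of the Pachner-type theorems used in \cite[Ch.\,15]{TVireBook}, and it can be obtained by a relative general-position/interpolation argument over $M\times[0,1]$. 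The resulting moves fall into three families: (i) bistellar moves of the underlying $2$-skeleton $\operatorname{T}$, including those that create or merge $\a$/$\abar$-labelled $0$-strata and those that slide a $T$-labelled $1$-stratum across an $A$-labelled $2$-stratum; (ii) moves isotoping $\imath(\mcR)$ across the strata of $\operatorname{T}$, in particular pushing a strand or a coupon of $\mcR$ across a $1$-stratum; and (iii) moves creating, cancelling, or transporting the auxiliary $\psi$- and $\phi$-insertions.

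For each elementary move it suffices, by the cutting-and-gluing property of the defect TQFT $\zzc$, to prove that the two local defect configurations inside $B$ induce the same vector in $\zzc(\pd B)$. This is exactly where the conditions on $\mcA$ are used: the identities \eqrefO{1}--\eqrefO{8} of Figure~\ref{fig:SpecialOrbifoldDatum} encode precisely the invariance of $\zzc$ under the moves of family~(i), and the identities \eqrefT{1}--\eqrefT{7} of Figure~\ref{fig:CA_identities} do the same for family~(ii). Concretely, I would ribbonise each local move via the map $R$ and recognise the resulting equation of $\mcC$-coloured ribbon graphs, after applying the Properties collected above --- fusing parallel line defects (Figure~\ref{fig:line_fusion}), composing and sliding $0$-strata (Property~\ref{ZCProp:compose_0-strata}), and deleting a contractible $A$-labelled disc together with its $\psi$-puncture (Property~\ref{prop:GetRidOfDiscs}, Figure~\ref{fig:disc_remove}) --- as one of the orbifold-datum or $\mcC_\mcA$-object axioms. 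The moves of family~(iii), and the extra $\psi$-insertions next to coupon edges, are handled by the coupon-composition bookkeeping of \cite[App.\,B]{CMRSS1}.

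The scalar bookkeeping is controlled by the additivity of the symmetric Euler characteristic $\chi_{\operatorname{sym}}$ from~\eqref{eq:symm_Euler_char} under gluing along boundaries: whenever a move refines or coarsens the cell decomposition of a $2$-stratum, the change in the total $\psi$-power is $\chi_{\operatorname{sym}}$ of the affected piece, which matches the $\Delta$-separability and Frobenius relations for $A$ on the algebraic side; likewise the change in the total $\phi$-power over a $3$-stratum is $\chi_{\operatorname{sym}}$ of that region and absorbs the normalisation of the $\a,\abar$ vertices. Assembling the per-move identities along the chosen sequence then yields $\zzc(F(M,\mcS))=\zzc(F(M,\mcT))$.

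I expect the main obstacle to be the completeness of the move list in the first step: one must show that any two admissible (and positive) $\mcA$-coloured ribbon diagrams with the same boundary are connected through such diagrams by finitely many elementary moves, while keeping control of the orientations and the local-order data throughout the interpolation --- not just of the underlying cell structures. Once this is in place the remaining verification, though lengthy, is essentially forced: each move reduces to one of the finitely many identities \eqrefO{1}--\eqrefO{8}, \eqrefT{1}--\eqrefT{7}, or to a standard manipulation of $\Delta$-separable symmetric Frobenius algebras already available from the construction of $\zzc$ in \cite{CRS2}.
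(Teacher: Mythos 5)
The paper does not prove Theorem~\ref{thm:foamification_indep} itself: it is quoted as \cite[Cor.\,4.13]{CMRSS1}, so there is no ``paper's own proof'' here to compare against directly. That said, your proposed strategy --- connect $\mcS$ and $\mcT$ by a finite sequence of elementary local moves on $\mcA$-coloured ribbon diagrams, verify invariance of $\zzc$ under each move by matching it against one of the identities \eqrefO{1}--\eqrefO{8} or \eqrefT{1}--\eqrefT{7} together with the $\Delta$-separable symmetric Frobenius manipulations already built into $\zzc$ (Properties~\ref{ZCProp:compose_0-strata} and~\ref{prop:GetRidOfDiscs}), and control the $\psi$-/$\phi$-bookkeeping by additivity of $\chi_{\operatorname{sym}}$ under gluing --- is exactly the strategy carried out in \cite{CMRSS1}, building on \cite{CRS1,CRS3} and \cite[Ch.\,11--15]{TVireBook}. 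You have also correctly isolated the genuinely delicate ingredient: the combinatorial completeness statement that any two admissible, positive $\mcA$-coloured ribbon diagrams with the same boundary skeleton can be connected \emph{through such diagrams} by finitely many elementary moves, respecting not only the underlying cell structure but also orientations, local orders, and the framing/positivity constraints on the embedded ribbon graph throughout. In \cite{CMRSS1} this is obtained by extending the skeleton-move results of \cite{TVireBook} to skeleta carrying an embedded graph rather than by a bare general-position interpolation over $M\times[0,1]$, which on its own does not obviously keep the intermediate diagrams admissible and positive; your acknowledgement that this is the main obstacle is well placed, and filling it in is where the real work of the cited proof lies.
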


From here on we proceed similarly as in Section \ref{subsec:RT_defect_TQFT}, following \cite[Sect.\,4.3.2]{CMRSS1}. 
For two $\mcA$-decorated skeleta $\mcG, \mcG'$ of a punctured surface $\Sigma\in\Bordriben{3}(\mcC_\mcA)$,
let $\mcT$ be an $\mcA$-decorated ribbon diagram of the cylinder bordism $C_\Sigma := \Sigma \times [0,1]$ which restricts to $\mcG$ and $\mcG'$ on the incoming and outgoing boundaries, respectively.
Define the linear map
\begin{equation}
\Psi_\mcG^{\mcG'} := \big[\zzc(F(C_\Sigma, \mcT)) \colon \zzc(F(\Sigma,\mcG)) \lra \zzc(F(\Sigma,\mcG'))\big] \,.
\end{equation}
By Theorem \ref{thm:foamification_indep} it is independent of the stratification in the interior and hence the following equality holds for all $\mcA$-decorated skeleta $\mcG, \mcG', \mcG''$ of the punctured surface $\Sigma$:
\begin{equation}
\Psi_{\mcG'}^{\mcG''}\circ\Psi_\mcG^{\mcG'} = \Psi_\mcG^{\mcG''} \,.
\end{equation}
\begin{construction}
\label{constr:zzCA}
Let $\mcA$ be a simple orbifold datum in a modular fusion category $\mcC$. The orbifold graph TQFT
\begin{equation}
\zzca \colon \Bordriben{3}(\mcC_\mcA) \lra \Vect
\end{equation}
is defined as follows:\footnote{%
    This construction works in the same way if $\mcA$ is not simple. The only change is that $\mcC_\mcA$ is multifusion. 
    Since here we are concerned with the case that $\mcC_\mcA$ is again a modular fusion category, we prefer not to switch between simple and not necessarily simple $\mcA$ in the presentation.
}
\begin{enumerate}
\item
For an object $\Sigma\in\Bordriben{3}(\mcC_\mcA)$ we set
\begin{equation}
\zzca(\Sigma) = \textrm{colim} \{\Psi_\mcG^{\mcG'}\} \,,
\end{equation}
where $\mcG, \mcG'$ range over $\mcA$-decorated skeleta of the punctured surface $\Sigma$.
\item
For a morphism $[(M,\mcR)\colon\Sigma\lra\Sigma']\in\Bordriben{3}(\mcC_\mcA)$ we set $\zzca(M,\mcR)$ to be
\begin{equation}
\begin{tikzcd}
	\zzca(\Sigma) \arrow[hook]{r} 
	& \zzc\big((F(\Sigma,\mcG)\big) \arrow{rr}{\zzc(F(M,\mcT))}
	& 
	& \zzc\big(F(\Sigma',\mcG')\big) \arrow[two heads]{r}
	& \zzca(\Sigma')  \, , 
\end{tikzcd}  
\end{equation}
where $\mcT$ is an arbitrary $\mcA$-decorated ribbon diagram of $(M,\mcR)$ restricting to $\mcG, \mcG'$ on the boundary components $\Sigma, \Sigma'$.
\end{enumerate}
\end{construction}
As in \eqref{eq:ZC_sos_as_im}, the state spaces of $\zzca$ are isomorphic to the images of the idempotents $\Psi_\mcG^\mcG$,
\begin{equation}
\label{eq:ZCA_sos_as_im}
\zzca(\Sigma) \cong \im \Psi_\mcG^\mcG \, .
\end{equation}

\begin{figure}
	\centering
	\begin{subfigure}[b]{0.32\textwidth}
		\centering
		\pic[1.25]{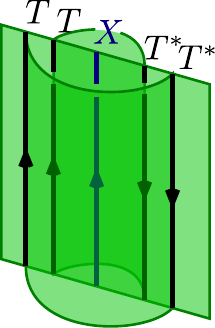}
		\caption{}
		\label{fig:pipe_strand}
	\end{subfigure}
	\begin{subfigure}[b]{0.32\textwidth}
		\centering
		\pic[1.25]{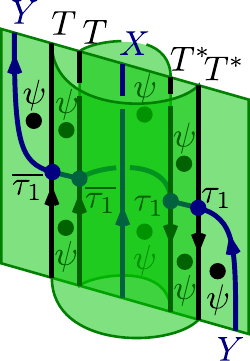}
		\caption{}
		\label{fig:pipe_crossing_1}
	\end{subfigure}
	\begin{subfigure}[b]{0.32\textwidth}
		\centering
		\pic[1.25]{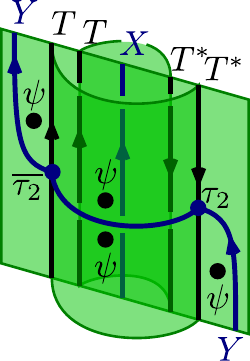}
		\caption{}
		\label{fig:pipe_crossing_2}
	\end{subfigure}
	\caption{Stratifications corresponding to 
	(a) the object $P(X)$, 
	(b) the morphism $c_{PX, Y}$, 
	(c) the morphism $c^{-1}_{Y, PX}$.
	}
	\label{fig:pipes}
\end{figure}

Let us collect some useful properties of $\zzca$.

\begin{property}
\label{ZCAProp:regular}
The orbifold graph TQFT $\zzca$ (like the graph TQFT $\zrtca$, see Section \ref{subsec:RT_graph_TQFT}) is a regular graph TQFT, i.\,e.\ the moves in \eqref{eq:regular-TQFT-def} become identities upon evaluation.\footnote{
This as well as Property~\ref{proper:pipe} actually hold in general for the graph TQFTs of \cite{CMRSS1}, but we only use them here for $\zzca$.
}
This follows from Property~\ref{ZCProp:compose_0-strata} 
of the defect TQFT $\zzc$.
Indeed, upon choosing an $\mcA$-coloured ribbon diagram and evaluating with $\zzc$, without loss of generality one can assume e.\,g.\ that the two coupons being composed lie in the same $A$-labelled $2$-stratum and interpret them as $0$-strata labelled by the corresponding $A$-$A$-bimodule morphisms.
There is an additional complication due to the $\psi$-insertions needed in the $\mcA$-colouring of a ribbon diagram, but the composition of the $0$-strata still remains the same as for bimodule morphisms.
In fact, the requirement to add a $\psi$-insertion to the left and to the right of each coupon was made to ensure that this holds, see \cite[App.\,B]{CMRSS1}.
\end{property}

\begin{property}
\label{proper:pipe}
Consider an $\mcA$-coloured ribbon diagram $\mathcal{R}$ obtained from the foamification procedure. If $\mathcal{R}$ contains a strand labelled by an object $P(X)$ obtained from applying the pipe functor \eqref{eq:pipe_functor} to a bimodule $X\in\mcACA$, we can exchange this strand for a pipe made of $T$-labelled $1$-strata with an $X$-labelled line in the middle (see Figure \ref{fig:pipe_strand}) without changing the value of the defect TQFT $\zzc$.
Similarly, coupons labelled  with braiding morphisms involving the object $P(X)$ can be exchanged for stratifications as depicted in Figures \ref{fig:pipe_crossing_1} and \ref{fig:pipe_crossing_2}.
\end{property}

\begin{figure}
	\captionsetup{format=plain}
	\centering
	\begin{subfigure}[b]{0.32\textwidth}
		\centering
		\pic[1.25]{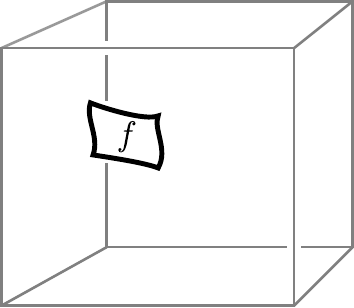}
		\caption{}
		\label{fig:S3c_original}
	\end{subfigure}
	\begin{subfigure}[b]{0.32\textwidth}
		\centering
		\pic[1.25]{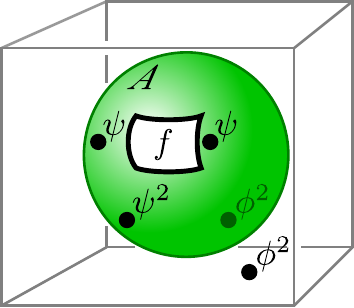}
		\caption{}
		\label{fig:S3c_foamified}
	\end{subfigure}
	\begin{subfigure}[b]{0.32\textwidth}
		\centering
		\pic[1.25]{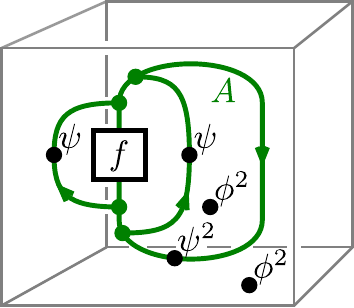}
		\caption{}
		\label{fig:S3c_ribbonised}
	\end{subfigure}
	\caption{
	(a) The $3$-sphere with an embedded $f\in\End_{\mcC_\mcA}$ labelled coupon $S^3_f$ depicted as a cube with boundary representing the point at infinity.
	(b) The $\mcA$-coloured ribbon diagram $\mcT$ for $S^3_f$.
	(c) A ribbonisation for the defect $3$-manifold $F(S^3_f,\mcT)$.
	}
	\label{fig:S3c_example}
\end{figure}

\begin{example}
\label{eg:S3c_example}
Let $f\in\End_{\mcC_\mcA}(A)$ and let $S^3_f = [(S^3_f,0)\colon\varnothing\lra\varnothing]$ be the morphism in $\Bordriben{3}(\mcC_\mcA)$ represented by the $3$-sphere with a single embedded $f$-labelled coupon. 
Since $A$ is the tensor unit in $\mcC_\mcA$, the coupon does not need to have adjacent strands, see Figure \ref{fig:S3c_original}.
We compute the invariant $\zzca(S^3_f)\in\opk$.
Let $\mcT$ be an $\mcA$-decorated ribbon diagram for $S^3_f$ consisting of an embedded $2$-sphere containing the coupon.
The resulting foamification $F(S^3_f,\mcT)$ is depicted in Figure \ref{fig:S3c_foamified}.
The defect sphere becomes a closed disc after removing the coupon; since the edges are treated as its boundary components, its symmetric Euler characteristic as given by~\eqref{eq:symm_Euler_char} is $2$, which explains the $\psi^2$-insertion.
The other two $\psi$-insertions are the ones that each coupon gets to the left and to the right of it.
$\mcT$ has two $3$-strata (the ``inside'' and the ``outside'' of the defect sphere), each of which gets a $\phi^2$-insertion.

\smallskip
\noindent
Next we consider the ribbonisation $R(\, F(S^3_f, \mcT) , \, t \,)$ where $t$ is the $1$-skeleton of the defect sphere with point insertions as shown in Figure \ref{fig:S3c_ribbonised}.
Using the invariant \eqref{eq:ZRTCS3} of $S^3$ one obtains:
\begin{align}
\nonumber
\zzca(S^3_f) &= 
\zzc\big( \, F(S^3_f,\mcT) \, \big) =
\zrt\big( \, R\big( \, F(S^3_f,\mcT), \, t \,\big) \, \big) \\
&=
\phi^4 \cdot \tr_\mcC(f\circ\psi^4) \cdot \zrt(S^3) =
\phi^4 \cdot \tr_\mcC(f\circ\psi^4) \cdot \mathscr{D}_\mcC^{-1} \,,
\end{align}
where in the third equality we collected the $\phi$-insertions into a single prefactor and simplified the ribbon graph into an $A$-labelled loop with an $f\circ\psi^4\colon A \lra A$ insertion. Since $\mcA$ is simple,
one can identify the scalar $f\in\End_{\mcC_\mcA}(A)\cong\opk$ 
with the trace $\tr_{\mcC_\mcA} f$.
Applying the expression \eqref{eq:DCA} then yields
\begin{equation}
\label{eq:S3c_computation}
\zzca(S^3_f)
= \phi^4 \cdot \tr_\mcC\psi^4 \cdot \mathscr{D}_\mcC^{-1} \cdot f
= \mathscr{D}_{\mcC_\mcA}^{-1} \cdot f \, .
\end{equation}
The resulting invariant is therefore precisely $\zrtca(S^3_f)$.
In fact, since any $\mcC_\mcA$-coloured ribbon graph $\mcR$ in $S^3$ can be simplified into a single coupon, this shows that more generally the equality
\begin{equation}
\label{eq:S3_invariants_equal}
\zzca(S^3,\mcR,0) = \zrtca(S^3,\mcR,0)
\end{equation}
holds.
In Section~\ref{subsec:InvariantsOf3Manifolds} this will play a role in showing that the two graph TQFTs are isomorphic.
Note that at this point we have not yet shown that the invariants of $(S^3,\mcR,n)$ for an arbitrary $n\in\opZ$ are equal as we have not shown that the anomalies of $\mcC$ and $\mcC_\mcA$ coincide, see~\eqref{eq:tau_Mn}.
This is done in Lemma~\ref{lem:C_CA_anomalies} below.
\end{example}

\begin{figure}
	\captionsetup{format=plain}
	\centering
	\begin{subfigure}[b]{0.38\textwidth}
		\centering
		\pic[1.25]{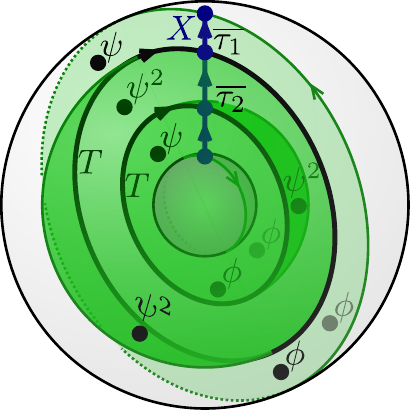}
		\caption{}
		\label{fig:S2wX_cylinder}
	\end{subfigure}
	\begin{subfigure}[b]{0.2\textwidth}
		\centering
		\pic[1.25]{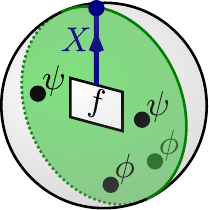}\\
		\vspace{1cm}
		\caption{}
		\label{fig:S2wX_ball}
	\end{subfigure}
	\begin{subfigure}[b]{0.38\textwidth}
		\centering
		\pic[1.25]{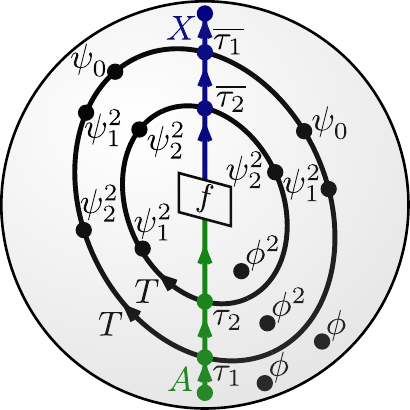}
		\caption{}
		\label{fig:S2wX_cylinder_rib_img}
	\end{subfigure}
	\caption{
	(a) An $\mcA$-coloured ribbon diagram $\mcT$ for the cylinder $C=S^2_\mcX \times [0,1]$ depicted as a closed ball with an open ball removed.
	On the boundary $\mcT$ restricts to $1$-skeleta $\mcG$ of the punctured surface $S^2_\mcX$ consisting of the $X$-labelled puncture with an adjacent $A$-labelled $1$-stratum.
	In the interior of $\mcT$ the two $T$-labelled $1$-strata have adjacent $A$-coloured $2$-strata forming ``bubbles'' between the two boundary components.
	(b) The bordism $B^{\operatorname{def}}_f\colon\varnothing \lra S^2_\mcG$.
	(c) Ribbonisation of $C_\mcT \circ B^{\operatorname{def}}_f\colon\varnothing\lra S^2_\mcG$.
	}
	\label{fig:S2wX_space}
\end{figure}
\begin{example}
Let $S^2_\mcX \in \Bordriben{3}(\mcC_\mcA)$ be the $2$-sphere with a single $\mcX\in\mcC_\mcA$ labelled puncture.
We compute the vector space $\zzca(S^2_\mcX)$.
Let $\mcT$ be the $\mcA$-coloured ribbon diagram of the cylinder $C=S^2_\mcX \times [0,1]$ restricting to a $1$-skeleton $\mcG$ on the boundary as depicted in Figure \ref{fig:S2wX_cylinder}, and let
\begin{equation}
    \big[C_\mcT\colon S^2_\mcG \lra S^2_\mcG\big] := F(C,\mcT) \in \Borddefen{3}(\D^\mcC)
\end{equation}
be the corresponding foamification.
By definition one has 
\begin{equation}
\zzca(S^2_\mcX) \cong \im\Psi_\mcG^\mcG \, , \quad \text{where } \Psi_\mcG^\mcG = \big[\zzc(C_\mcT)\colon \zzc(S^2_\mcG) \lra \zzc(S^2_\mcG)\big]  \, .
\end{equation}
The vector space $\zzc(S^2_\mcG)$ can be identified with $\mcACA(A,X)$ by sending an $A$-$A$-bimodule morphism $f\colon A \lra X$ to $\zzc(B^{\operatorname{def}}_f)$ where $B^{\operatorname{def}}_f\colon\varnothing\lra S^2_\mcG$ is the stratified closed ball bordism as in Figure \ref{fig:S2wX_ball}.\footnote{
 Since $\zzc(B^{\operatorname{def}}_f)$ is a linear map from $\Bbbk$ to $\zzc(S^2_\mcG)$, we should write $\zzc(B^{\operatorname{def}}_f)(1)$, but will usually keep the argument $1$ implicit.
}
One has the equality
\begin{equation}
    \Psi_\mcG^\mcG(f) = \overline{f} \, ,
\end{equation}
where $[\overline{f}\colon A \lra \X]\in\mcC_\mcA$ is the averaged morphism introduced in \eqref{eq:f_average}.
Indeed, $\Psi_\mcG^\mcG(f)$ corresponds to the evaluation $\zzc(C_\mcT \circ B^{\operatorname{def}}_f)$ which can be computed by ribbonising the argument and rearranging the result as in Figure \ref{fig:S2wX_cylinder_rib_img}
(for that we use the notation \eqref{eq:psi_omega_notation} and the crossing morphisms for $A$ \eqref{eq:CA_prod_crossings_unit}).
We conclude that for a simple orbifold datum $\mcA$ one has
\begin{equation}
\zzca(S^2_\mcX) 
\cong \mcC_\mcA(A,X) 
\cong \zrtca(S^2_\mcX) \, .
\end{equation}
Note that for a morphism $f\in\mcC_\mcA(A,X)$, the defect bordism $B^{\operatorname{def}}_f$ is a choice of foamification of the bordism $B_f\colon\varnothing\lra S^2_\mcX$ as introduced in~\eqref{eq:RTSPhereSpaces}.
\end{example}

\section{Proof of isomorphism}
\label{sec:proof}

Let $\mcA$ be a simple orbifold datum in a modular fusion category $\mcC$. 
From this we obtain the modular fusion category $\mcC_{\mcA}$ (cf.\ Section~\ref{subsubsec:SpecialOrbifoldData}) and the associated Reshetikhin--Turaev TQFT $\zrtca$ (Section~\ref{subsec:RT_graph_TQFT}). 
In this section we prove that this is the orbifold graph TQFT~$\zzca$ of Section~\ref{subsec:RT_orbifold_graph_TQFT}, hence showing that Reshetikhin--Turaev theories close under orbifolds: 
     
\begin{theorem}
\label{thm:tqfts_isomorphic}
Let $\mcA$ be a simple orbifold datum
in a modular fusion category $\mcC$. Then the 
orbifold graph TQFT~$\zzca$ for the choice of square root $\mathscr{D}_{\mcC}$
is equivalent (as monoidal functors) to the
graph TQFT~$\zrtca$ for the choice of square root $\mathscr{D}_{\mcC_\mcA}$ as in \eqref{eq:DCA}. Moreover, the monoidal natural isomorphism is unique.
\end{theorem}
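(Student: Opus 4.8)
The plan is to deduce the equivalence, together with its uniqueness, from the abstract comparison Lemma~\ref{lem:funct_iso}, applied to the symmetric monoidal functors $F := \zrtca$ and $G := \zzca$ on $\Bordriben{3}(\mcC_\mcA)$ (with the square-root choices as fixed in the statement). It then suffices to verify the three hypotheses of that lemma: (i)~$\zrtca$ is non-degenerate; (ii)~$\dim_\Bbbk \zrtca(\Sigma) \geqslant \dim_\Bbbk \zzca(\Sigma)$ for every object $\Sigma$; and (iii)~$\zrtca$ and $\zzca$ agree on all endomorphisms of the unit object, i.e.\ on the invariants of all closed $\mcC_\mcA$-coloured ribbon bordisms $(M,\mcR,n)$. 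Condition~(i) is immediate: $\mcC_\mcA$ is a modular fusion category by Theorem~\ref{thm:CA_is_modular}, and every Reshetikhin--Turaev TQFT is non-degenerate \cite[Ch.\,IV]{turaevbook}. Since Lemma~\ref{lem:funct_iso} produces the monoidal natural isomorphism uniquely, the uniqueness assertion of the theorem also follows, and the argument reduces to establishing~(ii) and~(iii).

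For~(ii) I would proceed as follows. By Property~\ref{zrt:coupons_in_spheres} (applied to $\mcC_\mcA$), for every punctured surface $\Sigma$ the space $\zrtca(\Sigma)$ is spanned by classes of solid handlebodies with embedded $\mcC_\mcA$-coloured ribbon graphs. Foamifying such a handlebody $H$ and evaluating with $\zzc$---equivalently, applying the functor $\zzca$ to $H$---produces an element $\zzca(H) \in \im \Psi_\mcG^\mcG \cong \zzca(\Sigma)$. One checks, as carried out in Section~\ref{subsec:state_spaces}, that upon passing to classes this assignment is independent of the chosen ribbon diagram (using Theorem~\ref{thm:foamification_indep}) and compatible with the universal-construction relations defining $\zrtca(\Sigma)$, so that it descends to a well-defined linear map $\zrtca(\Sigma) \lra \zzca(\Sigma)$. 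It is surjective because $\zzca$, being an orbifold graph TQFT, likewise has all of its state spaces spanned by (foamified) handlebodies. This gives the dimension inequality~(ii).

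Condition~(iii) is the core of the proof. First, to eliminate the framing anomaly I would prove Lemma~\ref{lem:C_CA_anomalies}, $\delta_\mcC = \delta_{\mcC_\mcA}$, by computing $p_{\mcC_\mcA}^{\pm}$ in terms of $p_\mcC^{\pm}$ via the trace formula~\eqref{eq:trace_in_CA}, the twist of $\mcC_\mcA$, and the dimension formula~\eqref{eq:dimCA}, and combining this with the square-root choice~\eqref{eq:DCA}; since $\zzca$ inherits the framing anomaly $\delta_\mcC$ from $\zzc$ while $\zrtca$ has framing anomaly $\delta_{\mcC_\mcA}$, this reduces~(iii) to the case $n=0$. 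For a closed $3$-manifold $M$ one picks a surgery presentation along a framed link $L \subset S^3$; on the Reshetikhin--Turaev side $\zrtca(M,\mcR,0)$ is then the surgery formula~\eqref{eq:M_inv_as_S3_inv} for $\mcC_\mcA$, obtained by Kirby-colouring $L$ and evaluating in $S^3$, weighted by powers of $\mathscr{D}_{\mcC_\mcA}$ and $\delta_{\mcC_\mcA}$. On the orbifold side, the main technical input---``skeleta from surgery'' (Section~\ref{subsec:skeleta_from_surgery})---is to build an $\mcA$-coloured ribbon diagram for $(M,\mcR)$ adapted to the handles of the surgery, so that evaluating $\zzc$ of the foamification reduces, by repeated use of the pipe functor (Property~\ref{proper:pipe}) and the local moves for $\zzc$, to a computation in $S^3$. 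Bookkeeping the resulting prefactors---the $\phi$- and $\psi$-insertions assembled from the symmetric Euler characteristics, together with $\mathscr{D}_\mcC$ and $\delta_\mcC$---and feeding in the already established $S^3$-equality~\eqref{eq:S3_invariants_equal} together with~\eqref{eq:dimCA} and~\eqref{eq:DCA} then gives $\zzca(M,\mcR,0) = \zrtca(M,\mcR,0)$.

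I expect the ``skeleta from surgery'' construction to be the main obstacle: one must associate to an arbitrary surgery link an admissible $\mcA$-coloured ribbon diagram that is at once explicit enough to be evaluated in closed form and correctly accounts for every $\phi$-, $\psi$- and quantum-dimension factor, so that the orbifold invariant reproduces precisely the Reshetikhin--Turaev surgery formula for $\mcC_\mcA$. By comparison, non-degeneracy, the spanning of state spaces by handlebodies, and the explicit computations inside $\mcC_\mcA$ (including the anomaly lemma) are comparatively routine.
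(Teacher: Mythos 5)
Your proposal follows the paper's proof essentially step by step: apply Lemma~\ref{lem:funct_iso} to $F=\zrtca$, $G=\zzca$, establish non-degeneracy via the universal construction, and reduce the remaining work to the dimension inequality (Lemma~\ref{lem:RT-orb_state_spaces}) and the agreement of closed-manifold invariants (Lemma~\ref{lem:invariants_equal}) via surgery presentations, pipe objects, and the skeleta-from-surgery construction of Section~\ref{subsec:skeleta_from_surgery}. Two places where your route diverges mildly from the paper's. First, for $\delta_\mcC=\delta_{\mcC_\mcA}$ you propose a direct computation of $p^\pm_{\mcC_\mcA}$ in terms of $p^\pm_\mcC$ using the trace formula~\eqref{eq:trace_in_CA} and the twists of $\mcC_\mcA$; the paper instead evaluates $\zzca(S^3,\varnothing,0)$ in two ways (empty link versus the $+1$-framed unknot $W$ in~\eqref{eq:W_link}) and reads off the anomaly equality from consistency, leveraging Lemmas~\ref{lem:ZCA_formula} and~\ref{lem:red_ribb_in_CA_id}. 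Your direct route would in effect re-derive the content of Lemma~\ref{lem:red_ribb_in_CA_id} for that one link, so the two approaches are close in substance, but the consistency argument avoids having to analyse the $\mcC_\mcA$-twists explicitly. Second, for condition~\ref{lem:funct_iso:cond2} you claim a well-defined surjection $\zrtca(\Sigma)\lra\zzca(\Sigma)$ obtained by showing foamification is ``compatible with the universal-construction relations''. Be careful: since $\textrm{rad}_{\textrm{r}}\,\beta_\Sigma$ is defined by pairing against all of $\mathcal{T}'(\Sigma)$, proving that $\zzca$ annihilates it essentially presupposes the closed-invariant agreement of condition~\ref{lem:funct_iso:cond3}, so the logical order of your two verifications would have to be reversed, or the dependency made explicit. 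The paper avoids this descent altogether by constructing a surjection from the explicit direct sum $V$ in~\eqref{eq:RT-state-space-direct-sum}, whose dimension equals $\dim\zrtca(\Sigma)$ by known Reshetikhin--Turaev facts, onto $\zzca(\Sigma)$; this gives $\dim\zrtca(\Sigma)\geqslant\dim\zzca(\Sigma)$ without needing~\ref{lem:funct_iso:cond3} as a prerequisite and keeps the two conditions logically independent. Neither difference is fatal, but the paper's choices are both a bit more economical and cleaner in their dependency structure.
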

The proof is similar to that of the isomorphism between the TQFTs of Turaev--Viro type obtained from a spherical fusion category $\mcS$ with $\dim\mcS \neq 0$ and the Reshetikhin--Turaev TQFTs obtained from the  Drinfeld centre $Z(\mcS)$, 
see \cite[Ch.\,17]{TVireBook}.
It relies on the following technical lemma, cf.\ \cite[Lem.\,17.2]{TVireBook}.
Since the uniqueness part of the statement is not part of the statement in
loc.\,cit.\ we repeat the main idea of the proof.
\begin{lemma}
	\label{lem:funct_iso}
	Let $\mathcal{M}$ be a rigid monoidal category and let $F,G\colon
        \mathcal{M} \lra \Vect$ be monoidal functors where we assume without
        loss of generality that $F(\one)=G(\one)=\Bbbk$.
	Moreover, suppose that     
	\begin{enumerate}[i)]
		\item
		\label{lem:funct_iso:cond1}
		$F$ is \textsl{non-degenerate},  i.\,e.\ for all objects $X\in\mathcal{M}$ one has
		\begin{equation}
        F(X) \cong \textrm{span}\big\{ F(f)(1) \,\big|\, f \in  \mathcal{M}(\one, X) \,\big\} \, ;
        \end{equation}
		\item
		\label{lem:funct_iso:cond2}
		for all $X \in \mathcal{M}$ one has
		\begin{equation}
		\dim_{\Bbbk} F(X) \geqslant \dim_{\Bbbk} G(X) \, ;
		\end{equation}
		\item
		\label{lem:funct_iso:cond3}
		for all $\varphi\in\End_\mathcal{M}(\one)$ one has
		\begin{equation}
		F(\varphi) = G(\varphi) \, .
		\end{equation}
	\end{enumerate}
	Then there is a unique natural monoidal isomorphism between $F$ and $G$.
\end{lemma}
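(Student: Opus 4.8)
The plan is to construct the natural transformation componentwise from the non-degeneracy hypothesis and then verify monoidality and bijectivity. First I would fix an object $X\in\mathcal{M}$. By condition \eqref{lem:funct_iso:cond1}, the space $F(X)$ is spanned by the vectors $F(f)(1)$ for $f\in\mathcal{M}(\one,X)$, so I would try to define a linear map $\eta_X\colon F(X)\lra G(X)$ by $\eta_X\big(F(f)(1)\big) := G(f)(1)$ and extending linearly. The first thing to check is that this is well defined, i.e.\ that a linear relation $\sum_j c_j\, F(f_j)(1) = 0$ in $F(X)$ forces $\sum_j c_j\, G(f_j)(1) = 0$ in $G(X)$. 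This is where rigidity and condition \eqref{lem:funct_iso:cond3} enter: using the left (or right) dual $X^*$ and the (co)evaluation maps, any $g\in\mathcal{M}(X,\one)$ together with $f\in\mathcal{M}(\one,X)$ composes to an endomorphism $g\circ f\in\End_\mathcal{M}(\one)$, on which $F$ and $G$ agree. Applying $F(g)$ to the relation and using $F(g)F(f_j)(1) = F(g\circ f_j)(1) = G(g\circ f_j)(1)$, one learns that $\sum_j c_j\, G(f_j)(1)$ is annihilated by $G(g)$ for \emph{every} $g\in\mathcal{M}(X,\one)$; invoking non-degeneracy of $F$ once more to see that the pairing $\mathcal{M}(X,\one)\times \mathcal{M}(\one,X)\to\End_\mathcal{M}(\one)\cong\Bbbk$ detects all of $F(X)$, hence (since $F$ surjects onto a space at least as big as $G(X)$ in every degree) also detects all of $G(X)$, one concludes $\sum_j c_j\, G(f_j)(1)=0$. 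This simultaneously shows $\eta_X$ is well defined and \emph{injective}.

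Next I would check that $\eta := (\eta_X)_X$ is natural: for $h\colon X\lra Y$ in $\mathcal{M}$ and $f\in\mathcal{M}(\one,X)$ one has $\eta_Y\big(F(h)F(f)(1)\big) = \eta_Y\big(F(h\circ f)(1)\big) = G(h\circ f)(1) = G(h)\eta_X\big(F(f)(1)\big)$, and this extends linearly since the $F(f)(1)$ span. Monoidality of $\eta$ follows from monoidality of $F$ and $G$: on a generating vector $F(f)(1)\otimes F(f')(1)\in F(X)\otimes F(Y)$, the coherence isomorphisms identify this with $F(f\otimes f')(1)\in F(X\otimes Y)$ (after composing with the unit isomorphism $\one\cong\one\otimes\one$), and likewise for $G$; compatibility with the unit constraint is immediate from $F(\one)=G(\one)=\Bbbk$ together with condition \eqref{lem:funct_iso:cond3} applied to $\id_\one$. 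Finally, surjectivity: $\eta_X$ is injective, so $\dim_\Bbbk F(X)\leqslant \dim_\Bbbk G(X)$; combined with hypothesis \eqref{lem:funct_iso:cond2} this gives $\dim_\Bbbk F(X)=\dim_\Bbbk G(X)<\infty$, hence $\eta_X$ is an isomorphism. Thus $\eta$ is a natural monoidal isomorphism.

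For uniqueness, suppose $\eta'$ is another natural monoidal isomorphism $F\Rightarrow G$. Naturality applied to an arbitrary $f\in\mathcal{M}(\one,X)$ gives a commuting square relating $\eta'_X$ and $\eta'_\one$, forcing $\eta'_X\big(F(f)(1)\big) = G(f)\big(\eta'_\one(1)\big)$; and monoidal compatibility with the unit forces $\eta'_\one = \id_\Bbbk$, so $\eta'_X\big(F(f)(1)\big) = G(f)(1) = \eta_X\big(F(f)(1)\big)$. Since these vectors span $F(X)$, we get $\eta'_X = \eta_X$ for all $X$, proving uniqueness.

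The main obstacle is the well-definedness argument in the first paragraph: one must leverage rigidity to produce enough functionals on $F(X)$ (coming from morphisms $X\to\one$), then argue via the equality on $\End_\mathcal{M}(\one)$ and the dimension inequality that these functionals separate points of $G(X)$ as well, so that no new relations appear. The rest is a bookkeeping exercise in monoidal coherence, and I would keep those verifications brief.
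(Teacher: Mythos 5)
Your proposal follows essentially the same route as the paper (which itself defers the existence part to \cite[Lem.\,17.2]{TVireBook}): define $\eta_X$ on the spanning set $\{F(f)(1):f\in\mathcal{M}(\one,X)\}$ via $F(f)(1)\mapsto G(f)(1)$, check that this is well defined, natural, monoidal and invertible, and obtain uniqueness from the naturality square together with $\eta_\one=\id_\Bbbk$. The naturality, monoidality, surjectivity-from-injectivity, and uniqueness parts of your write-up are all fine and match the intended argument.

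The one step that is asserted rather than proved is the pivot in your well-definedness argument: from ``the functionals $F(g)$, $g\in\mathcal{M}(X,\one)$, detect all of $F(X)$'' you conclude, with only the parenthetical ``since $F$ surjects onto a space at least as big as $G(X)$,'' that the functionals $G(g)$ detect all of $G(X)$. This implication is true but non-trivial and is really where conditions \ref{lem:funct_iso:cond2} and \ref{lem:funct_iso:cond3} interact; you should spell it out. Concretely, consider the bilinear form on $\Bbbk[\mathcal{M}(X,\one)]\times\Bbbk[\mathcal{M}(\one,X)]$ sending $(g,f)$ to the scalar $F(g\circ f)(1)$. By \ref{lem:funct_iso:cond3} this is the same scalar as $G(g\circ f)(1)$, so the form does not see whether we evaluate through $F$ or $G$. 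Via $F$ it factors through the canonical pairing $F(X)^\ast\times F(X)\to\Bbbk$ with both legs surjective (the right leg by non-degeneracy, the left by rigidity applied to $X^\ast$ as in your first paragraph), so its rank is $\dim_\Bbbk F(X)$. Via $G$ it factors through $G(X)^\ast\times G(X)$, so its rank is bounded above by $\dim_\Bbbk\mathrm{span}\{G(f)(1)\}\leqslant\dim_\Bbbk G(X)$. Combining with \ref{lem:funct_iso:cond2} forces all these dimensions to coincide; in particular the restricted pairing on the $G$-side is perfect, which is exactly the ``detects $G(X)$'' claim needed for both well-definedness and injectivity of $\eta_X$. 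With this inserted, your argument is complete.
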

\begin{proof}
 Suppose $\eta \colon F \lra G$ is a natural transformation. For all
  $X \in \mathcal{M}$ and all $f_X \in \mathcal{M}(\one,X)$, by naturality of $\eta$
the diagram
\begin{equation}
  \label{equation:commDiagUniqueNat}
    \begin{tikzcd}
      F(\one)=\Bbbk \ar{r}{=} \ar{d}[swap]{F(f_X)} & \Bbbk=G(\one) \ar{d}{G(f_X)} \\
	      F(X) \ar{r}[swap]{\eta_X}  &  G(X)
    \end{tikzcd}
  \end{equation}
 commutes. Since by assumption the vectors $F(f_X)$ span $F(X)$, this
  requirement determines $\eta$ uniquely. In the proof  of
  \cite[Lem.\,17.2]{TVireBook} it is shown that defining $\eta$ by the diagram
  \eqref{equation:commDiagUniqueNat} yields a well-defined monoidal natural
  transformation that has an inverse due to the assumption in part \ref{lem:funct_iso:cond2}.
\end{proof}

\begin{proof}[Proof of Theorem~\ref{thm:tqfts_isomorphic}]
We apply Lemma~\ref{lem:funct_iso} for $\mathcal{M} = \Bordriben{3}(\CC_\A)$, $F = \zrtca$ and $G = \zzca$. Thus to complete the proof of the theorem, we need to show that conditions \ref{lem:funct_iso:cond1}, \ref{lem:funct_iso:cond2}, \ref{lem:funct_iso:cond3} hold in this case. 

Condition \ref{lem:funct_iso:cond1} is fulfilled by definition of the Reshetikhin--Turaev TQFT, see~\eqref{eq:zrt-state-space-quotient}.
The proof that conditions \ref{lem:funct_iso:cond2} and \ref{lem:funct_iso:cond3} hold is more lengthy and has been moved to Sections~\ref{subsec:skeleta_from_surgery}--\ref{subsec:state_spaces} below. Specifically, condition \ref{lem:funct_iso:cond3} is shown in Lemma~\ref{lem:invariants_equal} and condition \ref{lem:funct_iso:cond2} in Lemma~\ref{lem:RT-orb_state_spaces}. 
\end{proof}

Note that for $\operatorname{char}\opk = 0$ condition \ref{lem:funct_iso:cond2} follows form condition \ref{lem:funct_iso:cond3}, 
since for all $\Sigma\in\Bordriben{3}(\CC_\A)$, the invariant of $\Sigma\times S^1$ already yield the dimension of the corresponding state space, see \eqref{eq:dim_ss_as_invariant}.
The more involved argument in Section~\ref{subsec:state_spaces} is needed to cover also the case $\operatorname{char}\opk \neq 0$.

\subsection{Skeleta from surgery}
\label{subsec:skeleta_from_surgery}
Let $M=[(M,\mathcal{R},n)\colon\varnothing\lra\varnothing]\in\Bordriben{3}(\mcC_\mcA)$ be a closed connected $\mathcal{C}_\A$-coloured ribbon $3$-manifold with an embedded ribbon graph $\mathcal{R}$.
For a choice $L$ of surgery link for $M$ we construct an $\A$-coloured ribbon diagram $\mcT_L$ for $(M,\mathcal{R})$:

\begin{enumerate}[i),wide, labelwidth=!, labelindent=0pt]
\item
\label{SL_step1}
Following Section \ref{subsec:RT_graph_TQFT},
let $L\sqcup\mcR$ be the ribbon graph in $S^3 \simeq \mathbb{R}^3 \cup \{\infty\}$ representing $(M,\mcR)$ i.\,e.\ consisting of possibly linked components of $L$ and $\mcR$ such that surgery along $L$ yields $(M,\mcR)$.
Project $L \sqcup \mathcal{R}$ on the outwards oriented unit $2$-sphere in $\R^3$ so that the coupons do not intersect the strands, the intersections of strands are pairwise distinct and transversal, and the framings of components of $L \sqcup \mathcal{R}$ agree with the orientation of the $2$-sphere.
The intersection points are marked to distinguish between ``overcrossings'' and ``undercrossings''.
For simplicity we assume that there are no crossings between the strands of $\mcR$, which can be achieved by exchanging them with coupons labelled with braiding morphisms. 
We identify~$L$ and~$\mathcal{R}$ with their projection images in~$S^2$ just described.
\begin{figure}
	\captionsetup{format=plain}
	\centering
	\begin{subfigure}[b]{0.6\textwidth}
		\centering
		\pic[1.25]{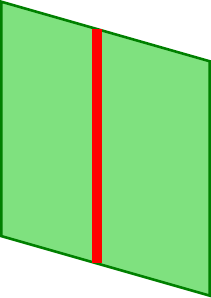}
		\hspace{-5pt}
		$\lra$
		\hspace{-5pt}
		\pic[1.25]{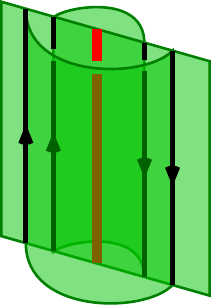}
		\caption{}
		\label{fig:LP-exchange}
	\end{subfigure}\\
	\begin{subfigure}[b]{0.48\textwidth}
		\centering
		\pic[1.25]{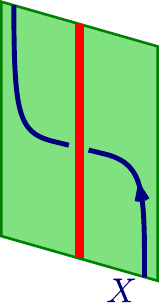} \hspace{-5pt}$\lra$\hspace{-5pt}
		\pic[1.25]{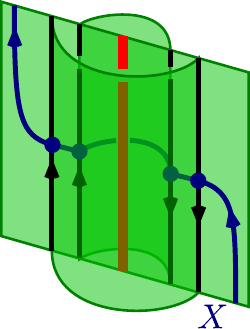}
		\caption{}
		\label{fig:PX_overcross-exchange}
	\end{subfigure}
	\begin{subfigure}[b]{0.48\textwidth}
		\centering
		\pic[1.25]{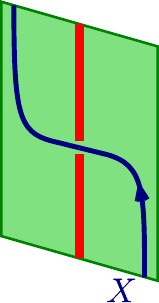} \hspace{-5pt}$\lra$\hspace{-5pt}
		\pic[1.25]{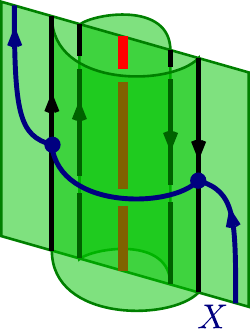}
		\caption{}
		\label{fig:PX_undercross-exchange}
	\end{subfigure}\\
	\begin{subfigure}[b]{0.8\textwidth}
		\centering
		\pic[1.25]{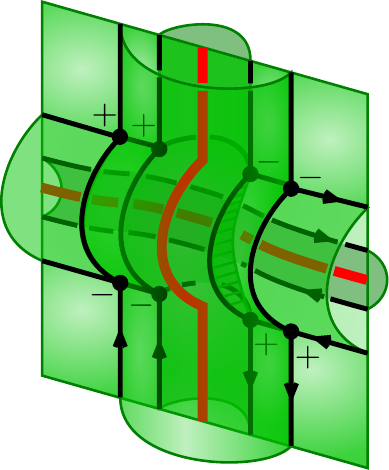}
		\caption{}
		\label{fig:PP_crossing-exchange}
	\end{subfigure}
	\caption{
		Exchanging surgery lines with tubes.
		Each $2$-stratum has the paper plane orientation, $1$-strata have orientations as indicated.
		In our application, strands that belong to the ribbon graph $\mathcal{R}$ will carry a label of some object
		     $\mathcal{X}= (X,\tau_1,\tau_2,\taubar{1},\taubar{2})\in\mathcal{C}_\A$, 
	    red strands will carry the Kirby object~\eqref{eq:Kirby_obj}, black strands will carry the label~$T$ from the orbifold datum~$\A$. Vertices will be labelled by  $\alpha$, $\bar\alpha$ from $\A$ or by
	    the crossing morphisms from~$\mathcal X$,
as appropriate.	    
	}
	\label{fig:L_tube}
\end{figure}
\begin{figure}
	\captionsetup{format=plain}
	\centering
	\begin{subfigure}[b]{0.8\textwidth}
		\centering
		\pic[1.25]{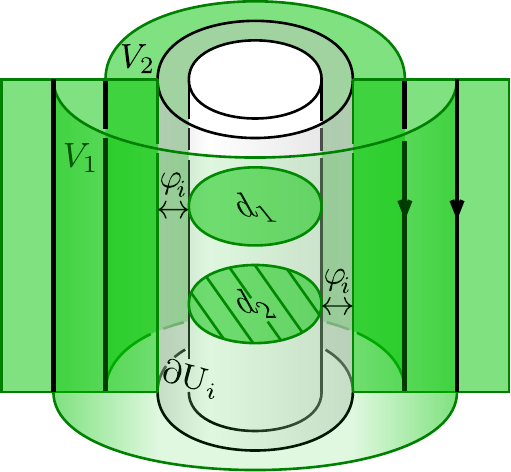}
		\caption{}
		\label{fig:gluing_1}
	\end{subfigure}\\
	\begin{subfigure}[b]{0.5\textwidth}
		\centering
		\pic[1.25]{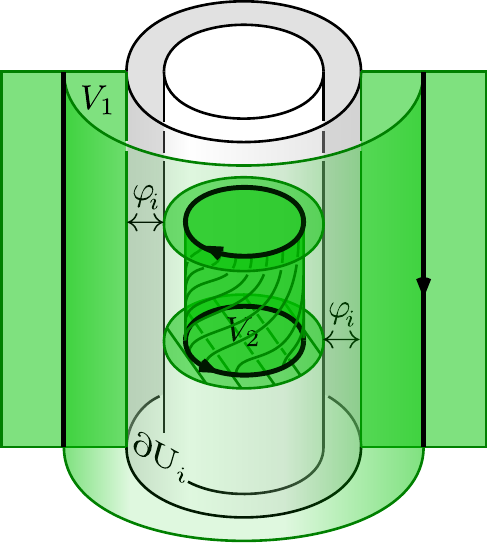}
		\caption{}
		\label{fig:gluing_2}
	\end{subfigure}
	\begin{subfigure}[b]{0.48\textwidth}
		\centering
		\pic[1.25]{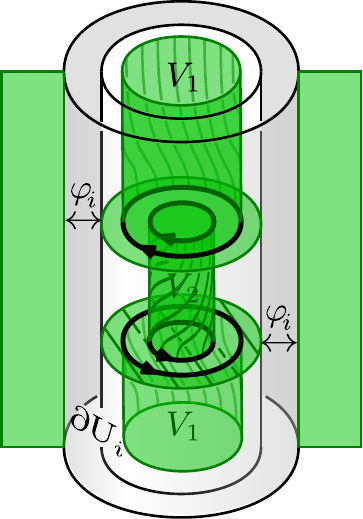}
		\caption{}
		\label{fig:gluing_3}
	\end{subfigure}
	\caption{
	For each component $L_i$, the 3-strata $V_1$, $V_2$ in the interior of the surrounding pipe become contractible after surgery.
	The pictures only show a strip of the pipe: it is actually closed and can be adjacent to other 2-strata or the strands of $R$ (which are ignored in this illustration).
	The inner white tube represents the solid torus with the two meridian disks $d_1$, $d_2$, that is being glued along $\varphi_i$ during surgery (for this tube the top and the bottom can be thought of as identified).
	Moreover, $\varphi_i$ maps the boundaries of the discs $d_1$, $d_2$ in
$\pd U_i$ to the meridian lines on the boundary of the solid torus.
	The pictures (a), (b) and (c) all depict the same configuration, in (b) and (c) the pipe stratification is moved inside the solid torus along $\varphi_i$.
	}
	\label{fig:wrapping_step}
\end{figure}
\item
\label{SL_step2}
Surround each strand of $L$ by additional $1$- and $2$-strata as shown in Figure \ref{fig:LP-exchange}.
As we will see later, this is done intentionally to mimic the stratification corresponding to a ``pipe'' object of $\mcC_\mcA$ as explained in Section \ref{subsec:RT_graph_TQFT}.
The crossings between the components of $L$ and $\mcR$ are exchanged for the stratifications as in Figures \ref{fig:PX_overcross-exchange} and \ref{fig:PX_undercross-exchange}, analogous to those corresponding to the braiding morphisms involving the pipe objects.
The crossings between the components of $L$ are handled in the same way, which results in a more intricate stratification shown in Figure \ref{fig:PP_crossing-exchange}.

At this point, the 3-strata in the stratification of $S^3$ are given by two open balls, and by two open tori for each component $L_i$ of $L$.

\item
Perform surgery along $L$ (see Section \ref{subsec:RT_graph_TQFT}):
for each component $L_i$ of $L$ let $U_i$ be a tubular neighbourhood of it, which lies inside the pipe stratification and let $\varphi_i\colon S^1 \times S^1 \lra -\pd(S^3 \setminus U_i)$ be the diffeomorphism along which the gluing with a solid torus $B^2\times S^1$ is performed, i.\,e.\ such that $\varphi_i(S^1\times\{1\})$ coincides with the framing of $L_i$.
We make this into a gluing of two stratified manifolds as follows:
Provide the solid tori with stratifications by two oriented meridian discs
\begin{equation}
\label{eq:meridian_discs}
d_1 = B^2\times\{\textrm{e}^{\textrm{i} \pi / 2}\} \quad\text{and}\quad
d_2 = (-B^2)\times\{\textrm{e}^{- \textrm{i} \pi / 2}\} \, .
\end{equation}
Since the $1$-strata on $\pd U_i$ run parallel to the curve left by the framing of $L_i$, we can assume that $\varphi_i$ is an isomorphism of stratified surfaces, i.\,e.\ $\varphi_i$ maps $S^1\times\{ \textrm{e}^{\pm \textrm{i} \pi / 2} \}$ to the $1$-strata of $\pd U_i$, see Figure \ref{fig:gluing_1}.
This yields the manifold $M$ together with a stratification $\operatorname{T}_L$, whose strata (besides those belonging to $\mcR$) are unlabelled.
\item
\label{SL_step4}
To show that $\operatorname{T}_L$ is a ribbon diagram for $(M,\mcR)$, one needs to argue that all its $3$-strata are contractible.
The two 3-strata of $S^3$ given by open balls do not intersect the tubular neighbourhoods $U_i$ and are not affected by the surgery. 
For the 3-strata that were open tori before surgery we proceed as follows:
\\
For a component $L_i$ of $L$, let $V_1, V_2$ be the $3$-strata inside the pipe stratification added in step \ref{SL_step2}.
Before the surgery, $V_1$ and $V_2$ are homeomorphic to open solid tori.
After the surgery their topology is changed: one can move the pipe stratification together with $V_1, V_2$ across the diffeomorphism $\varphi_i$ into the interior of the solid torus where they look like open solid cylinders and are evidently contractible (see Figures \ref{fig:gluing_2} and \ref{fig:gluing_3}).
\item
Define the $\mcA$-coloured ribbon diagram $\mcT_L$ by labelling the strata of $\operatorname{T}_L$ as in Section~\ref{subsec:RT_orbifold_graph_TQFT}: the $2$-strata with $A$, the $1$-strata either with $T$ or with the label of the corresponding strand of $\mcR$, and the $0$-strata either with $\a$/$\abar$, with the morphisms labelling the coupons of $\mcR$, or with the crossings morphisms between the $T$-lines and the strands of $\mcR$. Finally, add the appropriate $\psi$- and $\phi$-insertions.
\end{enumerate}
The number of $3$-strata in the ribbon diagram $\mcT_L$ is $2 + 2|L|$, where $|L|$ is the number of components of $L$.
Indeed, the $2$-sphere in step \ref{SL_step1} creates two $3$-strata and for each component of $L$ the pipe stratification creates two more $3$-strata.

\subsection{Invariants of closed 3-manifolds}
\label{subsec:InvariantsOf3Manifolds}

Let $M=[(M,\mathcal{R},n)\colon\varnothing\lra\varnothing]\in\Bordriben{3}(\mcC_\mcA)$ be a connected $\mcC_\mcA$-coloured ribbon $3$-manifold and let $L \subset S^3$ be a choice of surgery link for $M$. 
Let $\mcL_P \sqcup \mcR$ be the $\mcC_\mcA$-coloured ribbon graph in $S^3$ obtained from $L\sqcup\mcR$ by colouring each component of $L$ with $P(A \otimes K_\mcC \otimes A)$ and adding a $P(\psi^2 \otimes \xi_\mcC \otimes \psi^2)$-insertion, as well as exchanging the crossings with braiding morphisms.
\begin{lemma}
\label{lem:ZCA_formula}
The following formula holds:
\begin{equation}
\label{eq:ZCA_formula}
\zzca(M,\mcR,n) = 
\d_\mcC^{n-\sigma(L)} \cdot 
\mathscr{D}_\mcC^{-|L|} \cdot
\phi^{4|L|} \cdot
\mathscr{D}_{\mcC_\mcA}^{-1} \cdot
F_{\mcC_\mcA}(\mcL_P \sqcup \mcR) \, ,
\end{equation}
where since $\mcA$ is a simple orbifold datum, one has $F_{\mcC_\mcA}(\mcL_P \sqcup \mcR) \in \End_{\mcC_\mcA}(A)$ $\cong \opk$.
\end{lemma}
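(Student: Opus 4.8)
The plan is to unwind the definitions of $\zzca$, $\zzc$ and $\zrt$ until the left-hand side of~\eqref{eq:ZCA_formula} is expressed as an ordinary Reshetikhin--Turaev invariant of a $\mcC$-coloured ribbon $3$-manifold presented by surgery along $L$, and then to identify the resulting $S^3$-picture with a foamification of $\mcL_P\sqcup\mcR$. Concretely, I would start from the $\mcA$-coloured ribbon diagram $\mcT_L$ for $(M,\mcR)$ produced in Section~\ref{subsec:skeleta_from_surgery}. By Construction~\ref{constr:zzCA} together with the independence statement of Theorem~\ref{thm:foamification_indep}, and since $M$ is closed, one has $\zzca(M,\mcR,n) = \zzc\bigl(F(M,\mcT_L)\bigr)$, the integer $n$ being carried along unchanged. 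Choosing a $1$-skeleton $t$ for the $2$-strata of the defect bordism $F(M,\mcT_L)$ and invoking Construction~\ref{constr:DefectRT} then yields $\zzca(M,\mcR,n) = \zrt\bigl(R(F(M,\mcT_L),t)\bigr)$, so that the left-hand side of~\eqref{eq:ZCA_formula} has become the $\zrt$-invariant of a $\mcC$-coloured ribbon $3$-manifold $(M,N,n)$, where $N := R(F(M,\mcT_L),t)$.

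Next I would exploit that $M$ is obtained by surgery along $L\subset S^3$ and that the pipe parts of $\mcT_L$ sit inside the glued-in solid tori (step~\ref{SL_step4}). Isotoping the ribbon graph $N$ off the cores of those solid tori into $S^3\setminus U$ drags the pipe stratifications back out, so that they once more wrap around the components of $L$. Formula~\eqref{eq:M_inv_as_S3_inv} then gives $\zzca(M,\mcR,n) = \d_\mcC^{n-\sigma(L)}\cdot\mathscr{D}_\mcC^{-|L|}\cdot\zrt(S^3,G,0)$, where $G$ is the $\mcC$-coloured ribbon graph in $S^3$ consisting of the isotoped $N$ together with the Kirby-coloured link on $L$. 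Now $G$ has to be identified: around each component $L_i$ it is the ribbonisation of the pipe stratification whose core is the Kirby-coloured circle $L_i$ with its $\xi_\mcC$-insertion, which by the pipe-functor interpretation of Property~\ref{proper:pipe} and the definition~\eqref{eq:pipe_functor} of $P$ is precisely the ribbonisation of the foamification of a strand coloured by $P(A\otimes K_\mcC\otimes A)$ with the insertion $P(\psi^2\otimes\xi_\mcC\otimes\psi^2)$; away from $L$ it is the ribbonisation of the foamification of $\mcR$. Thus $G$ agrees with the ribbonisation of a foamification of $\mcL_P\sqcup\mcR$ in $S^3$, except for the $\phi$-insertions on the $3$-strata created by the pipes, whose symmetric Euler characteristics~\eqref{eq:symm_Euler_char} in the present construction differ from those prescribed by the canonical foamification; keeping track of this mismatch produces the prefactor $\phi^{4|L|}$ (amounting to two extra $\phi^{2}$ per component of $L$), so that $\zrt(S^3,G,0) = \phi^{4|L|}\cdot\zzca(S^3,\mcL_P\sqcup\mcR,0)$.

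Finally I would evaluate the $S^3$-invariant: by~\eqref{eq:S3_invariants_equal}, which holds for an arbitrary $\mcC_\mcA$-coloured ribbon graph (cf.\ Example~\ref{eg:S3c_example}), $\zzca(S^3,\mcL_P\sqcup\mcR,0) = \zrtca(S^3,\mcL_P\sqcup\mcR,0)$, and the latter equals $\mathscr{D}_{\mcC_\mcA}^{-1}\cdot F_{\mcC_\mcA}(\mcL_P\sqcup\mcR)$ by~\eqref{eq:tau_Mn} applied with empty surgery link to the graph TQFT $\zrtca$ for the square root $\mathscr{D}_{\mcC_\mcA}$ of~\eqref{eq:DCA}; here $F_{\mcC_\mcA}(\mcL_P\sqcup\mcR)\in\End_{\mcC_\mcA}(A)\cong\opk$ because $\mcA$ is simple. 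Putting the three steps together gives~\eqref{eq:ZCA_formula}. I expect the main obstacle to be the identification in the second step: converting the topological unfolding of the pipe stratifications under surgery (Figures~\ref{fig:L_tube}--\ref{fig:wrapping_step}) into the precise statement that $G$ is a foamification of $\mcL_P\sqcup\mcR$, and in particular bookkeeping all $\psi$- and $\phi$-insertions carefully enough to pin down the prefactor $\phi^{4|L|}$; the surgery formula~\eqref{eq:M_inv_as_S3_inv}, the pipe-functor description, and the $S^3$-evaluation from Example~\ref{eg:S3c_example} then enter only as black boxes.
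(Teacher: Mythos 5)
Your overall plan matches the paper's: unwind $\zzca$ to a $\zrt$-evaluation of the ribbonised foamification of $\mcT_L$, undo the surgery to return to $S^3$, identify the result with a foamification of $\mcL_P\sqcup\mcR$ up to a $\phi^{4|L|}$ prefactor, and finish with the $S^3$-evaluation from Example~\ref{eg:S3c_example} and~\eqref{eq:DCA}. The step you flag as ``the main obstacle'' is, however, where the paper's proof uses a non-negotiable ingredient that your isotopy argument does not supply, and this is a genuine gap.

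The meridian discs $d_1,d_2$ in~\eqref{eq:meridian_discs} are contractible $2$-strata of $F(M,\mcT_L)$ lying inside the glued-in solid tori, each carrying a $\psi^2$-insertion by the $\psi^{\chi_{\operatorname{sym}}}$-rule. The paper's step~(i) removes them via Property~\ref{prop:GetRidOfDiscs}, pushing their $\psi^2$-insertions onto the adjacent $T$-labelled $1$-strata as $\psi^2_2$-insertions; only then does the stratification lie in $S^3\setminus U$ so that~\eqref{eq:M_inv_as_S3_inv} applies, and only then are the $\psi^2$ factors in place to be packaged (in step~(iii)) together with $\xi_\mcC$ into the $P(\psi^2\otimes\xi_\mcC\otimes\psi^2)$-coupon. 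In your version those two $\psi^2$ factors appear out of nowhere: a pipe stratification around a Kirby-coloured circle carrying only its $\xi_\mcC$-insertion does \emph{not} foamify to $P(A\otimes K_\mcC\otimes A)$ with a $P(\psi^2\otimes\xi_\mcC\otimes\psi^2)$-insertion---the latter has two $\psi^2$ factors that need a source. Isotoping the ribbonised meridian discs off the surgery cores leaves an $A$-labelled blob in $S^3\setminus U$ that still has to be collapsed, and that collapse is exactly the missing invocation of Property~\ref{prop:GetRidOfDiscs}. A small further point: the $\phi^{4|L|}$ factor is not a ``mismatch'' of Euler characteristics but simply the product of the $\phi^2$-insertions on the two contractible $3$-strata (each with $\chi_{\operatorname{sym}}=2$) inside each pipe after surgery, which become an overall scalar once the pipe is traded for a $P$-labelled strand.
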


Besides the general proof below, we illustrate some steps in the proof of Lemma~\ref{lem:ZCA_formula} in Figure~\ref{fig:S3_W_computation} for the explicit example of the invariant of the ribbon $3$-manifold $(S^3,\varnothing,0)$, for which we use the surgery link $W$ having one component with a single twist, i.\,e.\
\begin{equation}
\label{eq:W_link}
W = \pic[1.25]{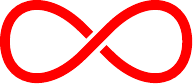} \, .
\end{equation}
This example will later help us compute the anomaly of the category $\mcC_\mcA$.

\begin{figure}
	\captionsetup{format=plain}
	\centering
	\begin{subfigure}[b]{0.49\textwidth}
	    \captionsetup{labelformat=empty}
		\centering
		\pic[1.25]{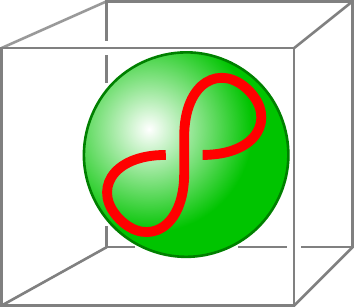}
		\caption{(a)}
		\label{fig:W_link_projected}
	\end{subfigure}
	\begin{subfigure}[b]{0.49\textwidth}
	    \captionsetup{labelformat=empty}
		\centering
		\pic[1.25]{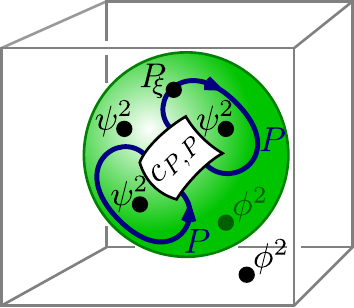}
		\caption{(c)}
		\label{fig:W_link_relabelled}
	\end{subfigure}\\
	\begin{subfigure}[b]{0.99\textwidth}
	    \captionsetup{labelformat=empty}
		\hspace{-30pt}
		\pic[1.25]{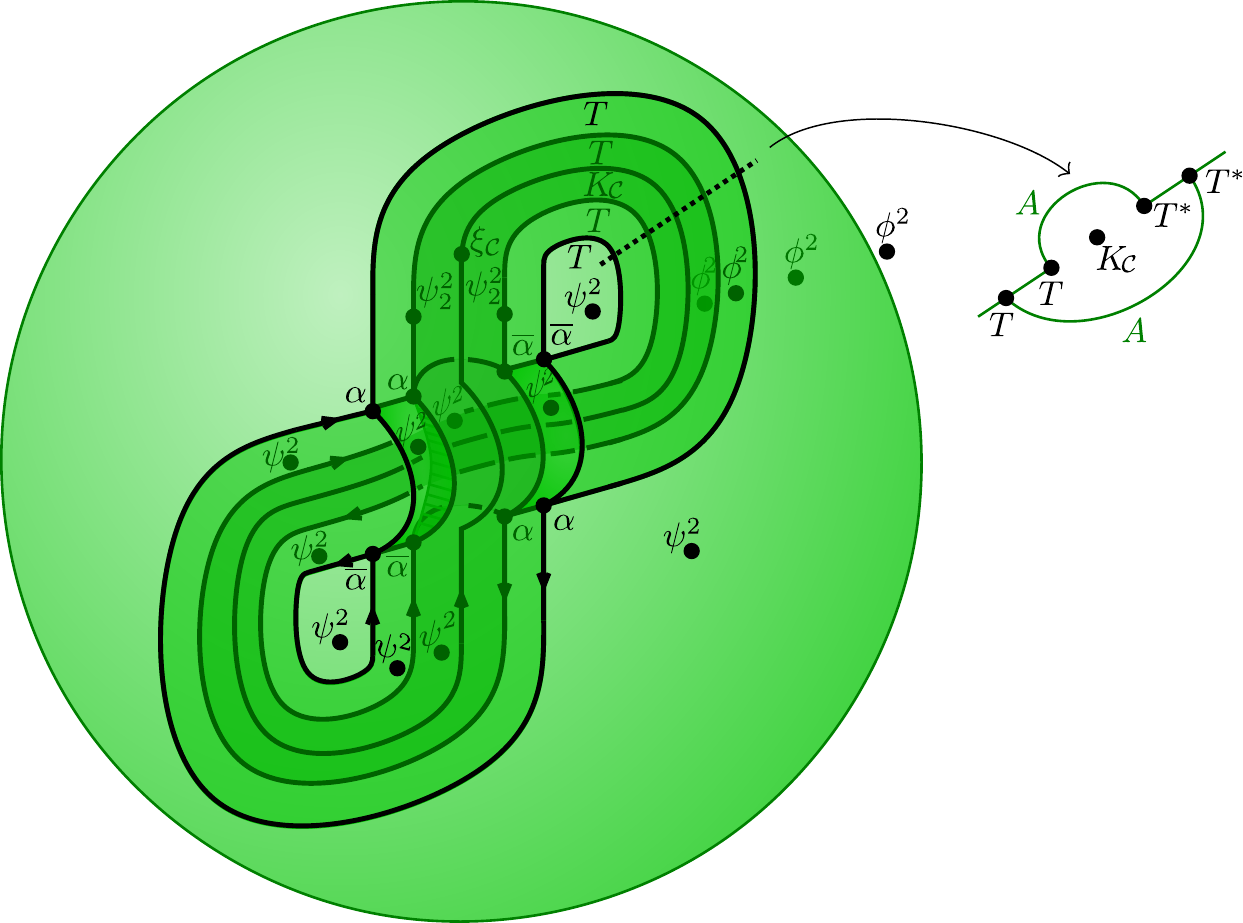}
		\caption{(b)}
		\label{fig:W_link_pipes}
	\end{subfigure}
	\caption{
	Illustration of the proof of Lemma~\ref{lem:ZCA_formula} for the case of surgery link $W$ in~\eqref{eq:W_link}:
	(a) Projecting $W$ on a 2-sphere in $S^3$;
	(b) Surrounding the strand of $W$ with pipe stratifications and adding labels.
	    For clarity the cross-section of the pipe ``from above'' is provided.
	    Note that the inner $T$-labelled lines have only two adjacent 2-strata as one of the was removed during step (i) of the proof.
	(c) Exchanging the pipes with $P := P(A \otimes K_\mcC \otimes A)$-labelled strands with $P_\xi := P(\psi^2 \otimes \xi_\mcC \otimes \psi^2)$-insertions and replacing the crossing with a coupon.
	}
	\label{fig:S3_W_computation}
\end{figure}

\begin{proof}[Proof of Lemma \ref{lem:ZCA_formula}]
    In terms of the $\mcA$-decorated ribbon diagram $\mcT_L$ from Section \ref{subsec:skeleta_from_surgery}, by definition of the invariants of closed manifolds one has
\begin{equation}
\zzca(M,\mcR,n) = \zzc \big( \, F(\, (M,\mcR,n),\, \mcT_L \,) \, \big) \, , 
\end{equation}
where~$F$ is the foamification discussed in Section~\ref{subsec:RT_orbifold_graph_TQFT}. 
We apply the properties of the defect TQFT $\zzc$ as reviewed in Section \ref{subsec:RT_defect_TQFT} to modify the stratification $\mcT_L$ of $M$ in several steps:
\begin{enumerate}[i),wide, labelwidth=!, labelindent=0pt]
\item
\label{ZAorb_eval_step_1}
For each component $L_i$ of $L$, the meridian discs of the solid tori glued in during surgery (i.\,e.\ $d_1, d_2$ as in \eqref{eq:meridian_discs}) are part of
contractible $2$-strata of $F(M,\mcT_L)$ each of which has a $\psi^2$-insertion as given by \eqref{eq:symm_Euler_char}.
From Section~\ref{subsec:RT_defect_TQFT} we know that they can be removed with the $\psi^2$-insertions moved aside to give $\psi^2_2$-insertions on the adjacent $T$-labelled $1$-strata (recall Property~\ref{prop:GetRidOfDiscs} and \eqref{eq:psi_omega_notation}).
The rest of $\mcT_L$ can then be moved so as to not intersect the boundaries of the solid tori along which the surgery was performed.
\item
\label{ZAorb_eval_step_2}
We can now undo the surgery used to obtain $M$, that is, we replace $M$ with $S^3$ and embedded surgery link $L$ with $\mcT_L$ in the complement of $L$.
By definition of $\zzc$ via the Reshetikhin--Turaev TQFT $\zrt$ and the formula \eqref{eq:M_inv_as_S3_inv} for the invariants of closed manifolds, one can further proceed by replacing the link $L$ with its coloured version $\mcL_\mcC$ (i.\,e.\ with components labelled by $K_\mcC$ and having $\xi_\mcC$-insertions, see \eqref{eq:Kirby_obj} and \eqref{eq:Kirby_col}) and multiplying with the overall factor $\d_\mcC^{n-\sigma(L)} \cdot \mathscr{D}_\mcC^{-|L|}$.
If for the example surgery link $W$ in~\eqref{eq:W_link} we choose the projection to~$S^2$ depicted in Figure~\hyperref[fig:W_link_pipes]{\ref{fig:S3_W_computation}a}, the resulting stratification of $S^3$ is as shown in Figure~\hyperref[fig:W_link_pipes]{\ref{fig:S3_W_computation}b}.
\item
\label{ZAorb_eval_step_3}
Following Section~\ref{subsec:RT_orbifold_graph_TQFT}, the components of the labelled surgery link $\mcL_\mcC$, along with the pipe stratifications surrounding them, can be exchanged for the $1$-strata labelled by the objects $P(A\otimes K_\mcC \otimes A)\in\mcC_\mcA$, i.\,e.\ by the pipe objects obtained from the induced bimodule $A \otimes K_\mcC \otimes A \in \mcACA$.
As an intermediate step in this change one uses the isomorphisms $T \otimes_2 A \cong T$ of $A$-$A\otimes A$-bimodules in order to apply the definition~\eqref{eq:pipe_functor} with $X = A \otimes K_\mcC \otimes A$.
surrounding stratification obtained in step \ref{ZAorb_eval_step_1} can be collected into a single $P(\psi^2 \otimes \xi_\mcC \otimes \psi^2)$-insertion.
The stratifications that replaced the crossings of strands in the construction of $\mcT_L$ (i.\,e.\ the ones depicted in Figures~\ref{fig:PX_overcross-exchange}, \ref{fig:PX_undercross-exchange} and~\ref{fig:PP_crossing-exchange} with the appropriate $\psi$-insertions) can in turn be replaced with coupons labelled by the braiding morphisms.

\item
The previous step yields an overall factor $\phi^{4|L|}$ (which previously served as $\phi^2$-insertions for the $3$-strata inside the pipe stratifications surrounding the components of $L$) and simplifies the stratification of $S^3$ to an $A$-labelled $2$-sphere with the graph $\mcL_P \sqcup \mcR$ projected on it as well as the two remaining $\phi^2$-insertions in its $3$-strata (see Figure~\hyperref[fig:W_link_relabelled]{\ref{fig:S3_W_computation}c} 
for how it looks for the example surgery link $W$).
The ribbon graph $\mcL_P \sqcup \mcR$ can be further exchanged for a single coupon labelled with $f = [F_{\mcC_\mcA}(\mcL_P \sqcup \mcR)\colon A \lra A]$.
At this point one recognises the remaining stratification of $S^3$ as the ribbon diagram used in Example~\ref{eg:S3c_example}.
Using the computation \eqref{eq:S3c_computation} and collecting all prefactors, one obtains the formula~\eqref{eq:ZCA_formula}.
\end{enumerate}
\end{proof}

\medskip

We proceed to compare the expression \eqref{eq:ZCA_formula} with the invariant $\zrtca(M,\mathcal{R},n)$, which is obtained by adapting \eqref{eq:tau_Mn} for the modular fusion category $\mcC_\mcA$.
We start by addressing the different colourings of the surgery link.

\medskip

Let $\mcR$ be any morphism in the category $\textrm{Rib}_{\mathcal{C}_\A}$ of $\mcC_\mcA$-coloured ribbon graphs (for the sake of generality, we do not assume that $\mcR$ is closed, i.\,e.\ it can have free incoming/outgoing strands), and let $L$ be an uncoloured oriented framed link.
Denote as before by $L\sqcup \mcR$ a ribbon graph consisting of possibly entangled components of $L$ and $\mcR$ and by $\mcL_{\mcC_\mcA} \sqcup \mcR$ and $\mcL_P \sqcup \mcR$ its versions with $L$ carrying the respective colouring by objects $K_{\mcC_\mcA}$ and $P(A \otimes K_\mcC \otimes A)$ with additional point insertions labelled by the endomorphisms $\xi_{\mcC_\mcA}$ and $P(\psi^2 \otimes \xi_\mcC \otimes \psi^2)$.

\begin{lemma}
\label{lem:red_ribb_in_CA_id}
One has the following identity of morphisms in $\mathcal{C}_\A$:
\begin{equation}
\label{eq:LK_vs_LP_labelling}
F_{\mcC_\mcA}(\mcL_{\mcC_\mcA} \sqcup \mcR)
= \frac{1}{(\tr_\mcC\psi^4)^{|L|}} \cdot F_{\mcC_\mcA}(\mcL_P \sqcup \mcR) \, .
\end{equation}
\end{lemma}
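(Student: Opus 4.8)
We want to show that the $\mcC_\mcA$‑colouring $\mcL_{\mcC_\mcA}$ of $L$ (by $K_{\mcC_\mcA}$ with $\xi_{\mcC_\mcA}$‑insertions) and the colouring $\mcL_P$ (by $P(A\otimes K_\mcC\otimes A)$ with $P(\psi^2\otimes\xi_\mcC\otimes\psi^2)$‑insertions) produce, under $F_{\mcC_\mcA}$, the same morphism up to the scalar $(\tr_\mcC\psi^4)^{|L|}$. The plan is to treat the components of $L$ one at a time. Both colourings leave the underlying framed link unchanged and differ only in the object and endomorphism attached to a given component, so it is enough to observe the following elementary fact: since $\mcA$ is simple, $\mcC_\mcA$ is fusion, hence semisimple, and a $0$‑framed circle coloured by an object $V=\bigoplus_{\mcZ\in\operatorname{Irr}_{\mcC_\mcA}}\mcZ\otimes M_\mcZ$ together with an endomorphism $g=\bigoplus_\mcZ\id_\mcZ\otimes g_\mcZ$ can be replaced, inside any $\mcC_\mcA$‑coloured ribbon graph, by $\sum_{\mcZ}\tr_\Bbbk(g_\mcZ)\cdot[\mcZ]$, i.e.\ by the corresponding linear combination of simple‑coloured circles (the common framing of a component contributes only a twist $\theta_\mcZ$ to both sides and is irrelevant). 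Writing $w(\mcZ):=\tr_\Bbbk(g_\mcZ)$ for the weight of a colouring, it therefore suffices to prove
\[
w_{P}(\mcZ)=\tr_\mcC\psi^4\cdot w_{K}(\mcZ)\qquad\text{for all }\mcZ\in\operatorname{Irr}_{\mcC_\mcA}\,,
\]
where $w_K$ and $w_P$ are the weights of the Kirby and pipe colourings; iterating over the $|L|$ components then yields \eqref{eq:LK_vs_LP_labelling}. For the Kirby colouring \eqref{eq:Kirby_obj}--\eqref{eq:Kirby_col} gives $w_K(\mcZ)=\dim_{\mcC_\mcA}(\mcZ)$.

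To compute $w_P(\mcZ)$, note that it is the $\Bbbk$‑linear trace of postcomposition by $P(\psi^2\otimes\xi_\mcC\otimes\psi^2)$ on the multiplicity space $M_\mcZ=\mcC_\mcA\big(\mcZ,\,P(A\otimes K_\mcC\otimes A)\big)$. Since the pipe functor $P$ is biadjoint to the forgetful functor $U\colon\mcC_\mcA\to\mcACA$ (see \cite{MuleRunk}), the adjunction $U\dashv P$ gives a natural isomorphism $M_\mcZ\cong\mcACA\big(Z,\,A\otimes K_\mcC\otimes A\big)$ with $Z=U\mcZ$, under which postcomposition by $P(g)$ becomes postcomposition by $g=\psi^2\otimes\xi_\mcC\otimes\psi^2$. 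Because $A$ is Frobenius, the induction functor $A\otimes(-)\otimes A$ is also right adjoint to the forgetful functor $\mcACA\to\mcC$, so $\mcACA(Z,A\otimes K_\mcC\otimes A)\cong\mcC(Z,K_\mcC)$; unwinding the (co)units of this cofree adjunction (which involve the canonical bicomodule structure \eqref{eq:coaction} on $Z$ and the counit $\epsilon_A$), the operator $g\circ(-)$ becomes
\[
\mcC(Z,K_\mcC)\lra\mcC(Z,K_\mcC)\,,\qquad \bar\varphi\lmt \xi_\mcC\circ\bar\varphi\circ\omega_Z^2\,,
\]
where $\omega_Z=\psi_{\mathrm r}\circ\psi_{\mathrm l}$ is the endomorphism of $Z$ from \eqref{eq:psi_omega_notation}: the two legs $\psi^2$ turn into the left‑ and right‑boundary insertions $\psi_{\mathrm l}^2$ and $\psi_{\mathrm r}^2$, whose composite is $\omega_Z^2$. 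Taking traces and using that $K_\mcC=\bigoplus_{i\in\operatorname{Irr}_\mcC}i$ and that $\xi_\mcC$ acts by $\dim_\mcC(i)$ on $i$, we obtain
\[
w_P(\mcZ)=\sum_{i\in\operatorname{Irr}_\mcC}\dim_\mcC(i)\cdot\tr_\Bbbk\!\big(\omega_Z^2\text{ on }\mcC(i,Z)\big)=\tr_\mcC\big(\omega_Z^2\big)\,,
\]
the last equality being the standard expression of the categorical trace of $\omega_Z^2\in\End_\mcC(Z)$ in the semisimple category $\mcC$.

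Comparing with the trace formula \eqref{eq:trace_in_CA} applied to $f=\id_\mcZ$ gives $\dim_{\mcC_\mcA}(\mcZ)=\tr_{\mcC_\mcA}(\id_\mcZ)=\tr_\mcC(\omega_Z^2)/\tr_\mcC\psi^4$, hence
$w_P(\mcZ)=\tr_\mcC\psi^4\cdot\dim_{\mcC_\mcA}(\mcZ)=\tr_\mcC\psi^4\cdot w_K(\mcZ)$, as required.

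The only nonformal point is the identification in the second paragraph: that, under the (co)free adjunction isomorphism for the Frobenius algebra $A$, postcomposition by $P(\psi^2\otimes\xi_\mcC\otimes\psi^2)$ becomes $\bar\varphi\mapsto\xi_\mcC\circ\bar\varphi\circ\omega_Z^2$, and in particular that the two $\psi^2$‑legs reassemble precisely into $\omega_Z^2$. Verifying this requires using that $\psi$ is a bimodule map together with the symmetric and $\Delta$‑separable Frobenius axioms; everything else is bookkeeping. (Alternatively, $w_P(\mcZ)$ could be obtained by foamifying the pipe strand and evaluating with $\zzc$ along the lines of the proof of Lemma~\ref{lem:ZCA_formula}, but the route above is shorter.)
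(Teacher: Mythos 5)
Your proof is correct and ultimately rests on the same two pillars as the paper's: the biadjunctions of the pipe functor $P \colon \mcACA \to \mcC_\mcA$ and of the induction functor $A\otimes(-)\otimes A \colon \mcC \to \mcACA$ with the respective forgetful functors, together with the trace formula \eqref{eq:trace_in_CA}. The organization is genuinely different, though. The paper runs a direct chain of algebraic rewritings: expand $F_{\mcC_\mcA}(\mcL_{\mcC_\mcA}\sqcup\mcR)$ over $\textrm{Irr}_{\mcC_\mcA}$ via the Kirby colouring, replace $\dim_{\mcC_\mcA}\Lambda$ by $\tr_\mcC\omega_\Lambda^2/\tr_\mcC\psi^4$, then use the adjunction decompositions (\ref{eq:adj_decompositions}\,a)--d) to successively ``resolve'' $\Lambda$ into simple bimodules $\nu$ and then into simple objects $k$, reassembling at the end into $P(A\otimes K_\mcC\otimes A)$ with the $P(\psi^2\otimes\xi_\mcC\otimes\psi^2)$-insertion. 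You instead isolate the per-component ``weight'' $w(\mcZ)$ of a framed-circle colouring, prove the proportionality $w_P(\mcZ)=\tr_\mcC\psi^4\cdot w_K(\mcZ)$ simple-by-simple by unravelling the adjunction isomorphisms on Hom-spaces and tracing, and invoke linearity of the ribbon functor to propagate this to the full link. Your route is arguably more modular and makes clear exactly where the factor $\tr_\mcC\psi^4$ comes from, at the cost of deferring one computation. That computation -- the transport of postcomposition by $\psi^2\otimes\xi_\mcC\otimes\psi^2$ across the (co)free adjunction to precomposition by $\omega_Z^2$ -- is correct (the key identity is $(\epsilon\circ\psi^2\otimes\id_Z)\circ\rho_l=\psi_{\mathrm l}^2$ and its mirror, which follow from symmetry and $\Delta$-separability as you indicate), but since it is the one step where the actual cancellation happens, a careful write-up should display it rather than outsource it. You also rely on the naturality/linearity of $F_{\mcC_\mcA}$ in the colour of a single closed component to reduce to weights; this is a standard property of ribbon functors, but is worth a one-line citation since the identity claimed is between morphisms in $\mcC_\mcA$ (with $\mcR$ possibly having free legs), not just scalars.
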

\begin{proof}
We will need the following decompositions for simple
objects $i\in\mathcal{C}$, $\mu\in{}_A\mathcal{C}_A$, $\Delta \in \mathcal{C}_\A$ (the label above the isomorphism symbol indicates the category in which it holds):
\begin{equation}
\begin{array}{llll}
\text{a)} & \displaystyle \mu \stackrel{\mathcal{C}}{\cong} \bigoplus\limits_{k\in\textrm{Irr}_{\mathcal{C}}} k \otimes \mathcal{C}(k,\mu) \, , &
\text{b)} & \displaystyle \Delta  \stackrel{{}_A\mathcal{C}_A}{\cong} \bigoplus\limits_{\nu\in\textrm{Irr}_{\!{}_A\mathcal{C}_A}} \nu \otimes {}_A\mathcal{C}_A(\nu,\Delta) \, ,\\ \label{eq:adj_decompositions}
\text{c)} & \displaystyle A\otimes i\otimes A \stackrel{{}_A\mathcal{C}_A}{\cong} \bigoplus\limits_{\nu\in\textrm{Irr}_{\!{}_A\mathcal{C}_A}} \nu \otimes \mathcal{C}(i,\nu) \, , &
\text{d)} & \displaystyle P(\mu) \stackrel{\mathcal{C}_\A}{\cong}\bigoplus\limits_{\Lambda\in\textrm{Irr}_{\mathcal{C}_\A}} \Lambda \otimes _A\mathcal{C}_A(\mu,\Lambda) \, .
\end{array}
\end{equation}
For the isomorphisms c) and d) we used that
the pipe functor $\mcACA\lra\mcC_\mcA$ and the induction functor $\mcC\lra\mcACA$ are biadjoint to the respective forgetful functors.

For a simple object $\nu\in{}_A\mathcal{C}_A$ and a morphism $f\in\End_{{}_A\mathcal{C}_A}(\nu)$, let us denote by $\langle f \rangle \in \Bbbk$ the number such that $f = \langle  f \rangle \cdot \id_\nu$.

\medskip

\noindent
It is enough to show \eqref{eq:LK_vs_LP_labelling} for the case when $L$ has one component, i.\,e. $|L|=1$, as one can then iterate the argument.
For an object $\mcX\in\mcC_\mcA$ and a morphism $f\in\End_{\mcC_\mcA}\mcX$, let $\mcL(\mcX,f)$ be the colouring of $L$ by $\mcX$ with an $f$-insertion so that
\begin{equation}
\mcL_{\mcC_\mcA} = \mcL\big(\, K_{\mcC_\mcA}, \, \xi_{\mcC_\mcA} \,\big) \, , \quad
\mcL_P = \mcL\big( \,P(A\otimes K_{\mcC} \otimes A ), \, P(\psi^2 \otimes \xi_\mcC \otimes \psi^2) \,\big) \,.
\end{equation}
In addition we abbreviate $\mcQ(\mcX,f):=\mcL(\mcX,f) \sqcup \mcR$ and $\mcQ(\mcX) := \mcQ(\mcX,\id_\mcX)$.
One has:
\begin{align}
\nonumber
F_{\mcC_\mcA}(\mcL_{\mcC_\mcA} \sqcup \mcR)
 & \stackrel{\eqref{eq:Kirby_obj},\eqref{eq:Kirby_col}}{=}\sum_{\Lambda \in \textrm{Irr}_{\mathcal{C}_\A}} \dim_{\mcC_\mcA}\Lambda \cdot F_{\mcC_\mcA}\big(\, \mcQ(\Lambda) \,\big) 
 \nonumber \\ 
 \nonumber 
 & \stackrel{\eqref{eq:trace_in_CA}}{=}\sum_{\Lambda \in \textrm{Irr}_{\mathcal{C}_\A}} \frac{\tr_\mathcal{C} \omega^2_\Lambda}{\tr_\mathcal{C} \psi^4} \cdot F_{\mcC_\mcA}\big( \, \mcQ(\Lambda) \, \big)\\ \nonumber
&\overset{\text{(\ref{eq:adj_decompositions}\,b)}}= \sum_{\substack{\Lambda \in \textrm{Irr}_{\mathcal{C}_\A}\\ \nu\in\textrm{Irr}_{\!\mcACA}}} \frac{\tr_\mathcal{C} \omega^2_\nu \cdot \dim {}_A\mathcal{C}_A(\nu,\Lambda)}{\tr_\mathcal{C} \psi^4} \cdot F_{\mcC_\mcA}\big(\, \mcQ(\Lambda) \, \big)
 \\ \nonumber 
 &\overset{\text{(\ref{eq:adj_decompositions}\,d)}}=\sum_{\nu\in\textrm{Irr}_{\!\mcACA}} \frac{\tr_\mathcal{C} \omega^2_\nu}{\tr_\mathcal{C} \psi^4} \cdot
F_{\mcC_\mcA}\big(\, \mcQ(P(\nu)) \,\big)\\ \nonumber
&=\sum_{\nu\in\textrm{Irr}_{\!\mcACA}} \frac{\dim_\mcC\nu \cdot \langle\omega_\nu^2\rangle}{\tr_\mathcal{C} \psi^4} \cdot
F_{\mcC_\mcA}\big(\,\mcQ(P(\nu)) \,\big)
\\ \nonumber 
 & =\sum_{\nu\in\textrm{Irr}_{\!\mcACA}} \frac{\dim_\mcC\nu}{\tr_\mathcal{C} \psi^4} \cdot
F_{\mcC_\mcA}\big(\, \mcQ(P(\nu),\, P(\omega_\nu^2)) \,\big) \\ \nonumber
&\overset{\text{(\ref{eq:adj_decompositions}\,a)}}=\sum_{\substack{\nu\in\textrm{Irr}_{\!\mcACA}\\ k\in\textrm{Irr}_{\mcC}}} \frac{\dim_\mcC k \cdot \dim \mcC(k,\nu)}{\tr_\mathcal{C} \psi^4} \cdot
F_{\mcC_\mcA}\big(\, \mcQ(P(\nu), \, P(\omega_\nu^2)) \,\big) \\ \nonumber
&\overset{\text{(\ref{eq:adj_decompositions}\,c)}}=\sum_{k\in\textrm{Irr}_{\mcC}} \frac{\dim_\mcC k}{\tr_\mathcal{C} \psi^4} \cdot
F_{\mcC_\mcA}\big(\, \mcQ(P(A \otimes k \otimes A), \, P(\omega_{A\otimes k \otimes A}^2)) \,\big)\\
&\stackrel{\eqref{eq:Kirby_obj},\eqref{eq:Kirby_col}}{=}\frac{1}{\tr_\mathcal{C} \psi^4} \cdot F_{\mcC_\mcA}(\mcL_P \sqcup \mcR) \, .
\end{align}
\end{proof}

\medskip

The identity in Lemma~\ref{lem:red_ribb_in_CA_id} provides the penultimate step in comparing the invariants of closed ribbon $3$-manifolds.
We apply it first to compare anomalies (defined in~\eqref{eq:anomaly}): 
\begin{lemma}
\label{lem:C_CA_anomalies}
For the choice of square roots of global dimensions related as in \eqref{eq:DCA},
the modular fusion categories $\mathcal{C}$ and $\mathcal{C}_\A$ have the same anomaly: $\delta_\mathcal{C} = \delta_{\mathcal{C}_\A}$.
\end{lemma}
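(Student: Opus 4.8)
The plan is to compute the scalar invariant $\zzca(S^3,\varnothing,0)$ in two different ways, by choosing two surgery presentations of $S^3$, and to read off the equality of anomalies from the comparison. This reuses Lemmas~\ref{lem:ZCA_formula} and~\ref{lem:red_ribb_in_CA_id} together with the defining relation~\eqref{eq:DCA}, so no genuinely new input is needed.

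First I would apply Lemma~\ref{lem:ZCA_formula} to $(S^3,\varnothing,0)$ presented by the empty surgery link: then $|L|=\sigma(L)=0$ and $\mcL_P$ is empty, so the formula collapses to $\zzca(S^3,\varnothing,0)=\mathscr D_{\mcC_\mcA}^{-1}$ (in agreement with $\zrtca(S^3,0)=\mathscr D_{\mcC_\mcA}^{-1}$ via~\eqref{eq:S3_invariants_equal}). Next I would present the same $(S^3,\varnothing,0)$ by the link $W$ of~\eqref{eq:W_link}, the unknot with a single positive twist, for which surgery on $S^3$ returns $S^3$, with $|W|=1$ and $\sigma(W)=+1$. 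Lemma~\ref{lem:ZCA_formula}, carried out concretely as sketched in Figure~\ref{fig:S3_W_computation}, then gives
\begin{equation*}
\zzca(S^3,\varnothing,0)=\delta_\mcC^{-1}\cdot\mathscr D_\mcC^{-1}\cdot\phi^{4}\cdot\mathscr D_{\mcC_\mcA}^{-1}\cdot F_{\mcC_\mcA}(\mcL_P)\,,
\end{equation*}
with $\mcL_P$ the $(+1)$-framed unknot coloured by $P(A\otimes K_\mcC\otimes A)$ carrying a $P(\psi^2\otimes\xi_\mcC\otimes\psi^2)$-insertion.

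The remaining task is to evaluate $F_{\mcC_\mcA}(\mcL_P)$. By Lemma~\ref{lem:red_ribb_in_CA_id} with empty $\mcR$ and $|L|=1$ one has $F_{\mcC_\mcA}(\mcL_P)=\tr_\mcC(\psi^4)\cdot F_{\mcC_\mcA}(\mcL_{\mcC_\mcA})$, where $\mcL_{\mcC_\mcA}$ is the $(+1)$-framed unknot coloured by the Kirby object $K_{\mcC_\mcA}$ with the $\xi_{\mcC_\mcA}$-insertion. Since $\mcC_\mcA$ is a modular fusion category (Theorem~\ref{thm:CA_is_modular}), the standard evaluation recalled around~\eqref{eq:pC} applies verbatim in $\mcC_\mcA$: the positive twist acts on each simple $j\in\operatorname{Irr}_{\mcC_\mcA}$ by $\theta_j$, so $F_{\mcC_\mcA}(\mcL_{\mcC_\mcA})=\sum_{j\in\operatorname{Irr}_{\mcC_\mcA}}\theta_j\,\dim_{\mcC_\mcA}(j)^2=p_{\mcC_\mcA}^+$. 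Equating the two expressions for $\zzca(S^3,\varnothing,0)$ and cancelling the common factor $\mathscr D_{\mcC_\mcA}^{-1}$ yields $1=\delta_\mcC^{-1}\cdot\mathscr D_\mcC^{-1}\cdot\phi^{4}\cdot\tr_\mcC(\psi^4)\cdot p_{\mcC_\mcA}^+$, i.e.\ $\delta_\mcC=\mathscr D_\mcC^{-1}\,\phi^{4}\,\tr_\mcC(\psi^4)\,p_{\mcC_\mcA}^+$. Finally, rearranging~\eqref{eq:DCA} to $\mathscr D_\mcC^{-1}\,\phi^{4}\,\tr_\mcC(\psi^4)=\mathscr D_{\mcC_\mcA}^{-1}$ gives $\delta_\mcC=p_{\mcC_\mcA}^+/\mathscr D_{\mcC_\mcA}=\delta_{\mcC_\mcA}$.

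The steps are mostly scalar bookkeeping; the one point requiring care is convention matching — that $W$ as drawn in~\eqref{eq:W_link} really carries framing $+1$ (so $\sigma(W)=+1$ and the twist contributes $p_{\mcC_\mcA}^+$ rather than $p_{\mcC_\mcA}^-$), and that the concrete manipulations in Figure~\ref{fig:S3_W_computation} are exactly the instance of Lemma~\ref{lem:ZCA_formula} being invoked here. If the twist in~\eqref{eq:W_link} is negative instead, the argument is identical with $p^+$ replaced by $p^-$ throughout, using the second ratio in~\eqref{eq:anomaly}. A self-contained alternative, avoiding surgery altogether, would be to compute $p_{\mcC_\mcA}^\pm$ directly from $p_\mcC^\pm$ via the trace formula~\eqref{eq:trace_in_CA} and the adjunction decompositions~\eqref{eq:adj_decompositions}, mirroring the proof of Lemma~\ref{lem:red_ribb_in_CA_id}; but the two-surgery-link comparison is shorter and reuses the machinery already in place.
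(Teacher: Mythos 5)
Your proposal is correct and matches the paper's own proof step for step: both compare the invariant $\zzca(S^3,\varnothing,0)$ computed from the empty link against the same invariant computed from the $(+1)$-framed unknot $W$, via Lemma~\ref{lem:ZCA_formula}, Lemma~\ref{lem:red_ribb_in_CA_id}, the identification $F_{\mcC_\mcA}(\mcW_{\mcC_\mcA})=p^+_{\mcC_\mcA}$, and the relation~\eqref{eq:DCA}. The scalar bookkeeping and the final rearrangement $\delta_\mcC = p^+_{\mcC_\mcA}/\mathscr{D}_{\mcC_\mcA} = \delta_{\mcC_\mcA}$ are exactly as in the paper.
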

\begin{proof}
We compare two computations of the invariant $\zzca(S^3,\varnothing,0)$, one using the empty surgery link as in the Example~\ref{eg:S3c_example} and the other using the surgery link $W$ in  \eqref{eq:W_link}.
The former one is obtained from \eqref{eq:S3_invariants_equal} and \eqref{eq:ZRTCS3} and gives $\zzca(S^3,\varnothing,0) = \mathscr{D}_{\mcC_\mcA}^{-1}$.
For the latter, note that one has $\sigma(W)=1$ and it follows from~\eqref{eq:pC} 
that $F_{\mcC_\mcA}(\mcW_{\mcC_\mcA}) = p^+_{\mcC_\mcA}$.
One uses Lemmas~\ref{lem:ZCA_formula} and~\ref{lem:red_ribb_in_CA_id} to compute:
\begin{align}
\nonumber
\zzca(S^3,\varnothing,0)
\, &= \,
\delta_\mcC^{-1} \cdot
\mathscr{D}_\mcC^{-1} \cdot 
\phi^4 \cdot
\mathscr{D}_{\mcC_\mcA}^{-1} \cdot
F_{\mcC_\mcA}(\mcW_P)
=
\delta_\mcC^{-1} \cdot
\left( \frac{\mathscr{D}_\mcC}{\phi^4 \cdot \tr_\mcC \psi^4} \right)^{-1} 
\hspace{-.8em}
\cdot
\mathscr{D}_{\mcC_\mcA}^{-1} \cdot
p^+_{\mcC_\mcA}
\\
&
\hspace{-1.4em} \stackrel{\text{\eqref{eq:DCA},\eqref{eq:anomaly}}}{=} \,
\delta_\mcC^{-1} \cdot
\mathscr{D}_{\mcC_\mcA}^{-1} \cdot
\delta_{\mcC_\mcA} \, .
\end{align}
Since by well-definedness of $\zzca$ the two computations have to agree, the statement follows. 
\end{proof}

The next lemma finally establishes condition~\ref{lem:funct_iso:cond3} in Lemma~\ref{lem:funct_iso}.

\begin{lemma}
\label{lem:invariants_equal}
Let $(M,\mathcal{R},n)$ be a closed $\mcC_\mcA$-coloured ribbon $3$-manifold viewed as a morphism $\varnothing\lra\varnothing$ in $\Bordriben{3}(\mcC_\mcA)$.
One has:
\begin{equation}
\zrtca(M,\mathcal{R},n) = \zzca(M,\mathcal{R},n) \, .
\end{equation}
\end{lemma}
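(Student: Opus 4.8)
The plan is to compare the two surgery formulas for the invariant of $(M,\mathcal{R},n)$ and to check that, once the preparatory lemmas of this section are fed in, every scalar prefactor and every link colouring matches. So first I would fix a surgery link $L\subset S^3$ presenting the closed ribbon bordism $(M,\mathcal{R},n)$, with $|L|$ components, and write $\sigma(L)$ for the signature of its linking matrix.

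Next I would record the Reshetikhin--Turaev invariant computed inside $\mcC_\mcA$ for the square root $\mathscr{D}_{\mcC_\mcA}$ of~\eqref{eq:DCA}. Specialising~\eqref{eq:tau_Mn} to the modular fusion category $\mcC_\mcA$ gives
\begin{equation*}
\zrtca(M,\mathcal{R},n)=\delta_{\mcC_\mcA}^{\,n-\sigma(L)}\cdot\mathscr{D}_{\mcC_\mcA}^{-|L|-1}\cdot F_{\mcC_\mcA}(\mcL_{\mcC_\mcA}\sqcup\mathcal{R})\,,
\end{equation*}
where $\mcL_{\mcC_\mcA}$ is the surgery link coloured by the Kirby object $K_{\mcC_\mcA}$ with $\xi_{\mcC_\mcA}$-insertions. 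On the other hand, Lemma~\ref{lem:ZCA_formula} expresses the orbifold invariant as
\begin{equation*}
\zzca(M,\mathcal{R},n)=\delta_\mcC^{\,n-\sigma(L)}\cdot\mathscr{D}_\mcC^{-|L|}\cdot\phi^{4|L|}\cdot\mathscr{D}_{\mcC_\mcA}^{-1}\cdot F_{\mcC_\mcA}(\mcL_P\sqcup\mathcal{R})\,.
\end{equation*}
I would then transform the first expression into the second by three substitutions: replace $F_{\mcC_\mcA}(\mcL_{\mcC_\mcA}\sqcup\mathcal{R})$ by $(\tr_\mcC\psi^4)^{-|L|}F_{\mcC_\mcA}(\mcL_P\sqcup\mathcal{R})$ via Lemma~\ref{lem:red_ribb_in_CA_id}; expand $\mathscr{D}_{\mcC_\mcA}^{-|L|-1}=\mathscr{D}_\mcC^{-|L|-1}\cdot\phi^{4(|L|+1)}\cdot(\tr_\mcC\psi^4)^{|L|+1}$ using~\eqref{eq:DCA}; and replace $\delta_{\mcC_\mcA}$ by $\delta_\mcC$ via Lemma~\ref{lem:C_CA_anomalies}. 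The net effect is that the $\zrtca$-side collapses to the single monomial $\delta_\mcC^{\,n-\sigma(L)}\cdot\mathscr{D}_\mcC^{-|L|-1}\cdot\phi^{4(|L|+1)}\cdot(\tr_\mcC\psi^4)\cdot F_{\mcC_\mcA}(\mcL_P\sqcup\mathcal{R})$; applying the same substitution $\mathscr{D}_{\mcC_\mcA}^{-1}=\phi^4\,\tr_\mcC\psi^4/\mathscr{D}_\mcC$ to the $\zzca$-formula produces the identical monomial, so the two invariants coincide. This is exactly condition~\ref{lem:funct_iso:cond3} of Lemma~\ref{lem:funct_iso}.

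I do not anticipate a genuine obstacle at this stage: all of the geometric and categorical substance has been packaged into the earlier results --- the construction of $\mcA$-coloured ribbon diagrams out of surgery links in Section~\ref{subsec:skeleta_from_surgery}, the orbifold surgery formula of Lemma~\ref{lem:ZCA_formula}, the Kirby-colouring comparison of Lemma~\ref{lem:red_ribb_in_CA_id}, and the anomaly equality of Lemma~\ref{lem:C_CA_anomalies} --- so what remains is purely the bookkeeping of scalar factors sketched above. The one point worth flagging is that the $n$-dependent anomaly factor $\delta_{\mcC_\mcA}^{\,n}$ is precisely where Lemma~\ref{lem:C_CA_anomalies} is indispensable: without the equality $\delta_\mcC=\delta_{\mcC_\mcA}$ one would only obtain agreement of the invariants for $n=0$, which is the weaker statement already noted at the end of Example~\ref{eg:S3c_example}.
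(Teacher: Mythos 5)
Your argument matches the paper's proof: both compare the surgery formula~\eqref{eq:tau_Mn} for $\zrtca$ with Lemma~\ref{lem:ZCA_formula} for $\zzca$ and reconcile the prefactors via Lemma~\ref{lem:red_ribb_in_CA_id}, Lemma~\ref{lem:C_CA_anomalies}, and the relation~\eqref{eq:DCA}, and your scalar bookkeeping checks out. The only omission is the preliminary reduction to connected $M$ (using multiplicativity of both invariants over connected components), which is needed because a single surgery link presentation and the hypotheses of Lemma~\ref{lem:ZCA_formula} presuppose connectedness; the paper makes this step explicit.
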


\begin{proof}
Since both invariants are multiplicative with respect to connected components, it is enough to consider the case when $M$ is connected.
One has:
\begin{align}
\nonumber
\zrtca(M,\mathcal{R},n)
&~\,\stackrel{\eqref{eq:tau_Mn}}{=}&&
\delta_{\mcC_\mcA}^{n-\sigma(L)} \cdot 
\mathscr{D}_{\mcC_\mcA}^{-|L|-1} \cdot 
F_{\mcC_\mcA}(\mcL_{\mcC_\mcA} \sqcup \mcR)\\ \nonumber
&\stackrel[\eqref{eq:LK_vs_LP_labelling}]{\text{Lem.\ref{lem:C_CA_anomalies}}}{=}&&
\delta_{\mcC}^{n-\sigma(L)} \cdot
\mathscr{D}_{\mcC_\mcA}^{-|L|-1} \cdot
\frac{1}{(\tr_\mcC \psi^4)^{|L|}} \cdot
F_{\mcC_\mcA}(\mcL_P \sqcup \mcR)\\ \nonumber
&~\,\stackrel{\eqref{eq:ZCA_formula}}{=}&&
\mathscr{D}_{\mcC_\mcA}^{-|L|} \cdot
\frac{1}{(\tr_\mcC \psi^4)^{|L|}} \cdot
\mathscr{D}_{\mcC}^{|L|} \cdot
\phi^{-4|L|} \cdot
\zzca(M,\mathcal{R},n) \\ \nonumber
&~~\,=&&
\mathscr{D}_{\mcC_\mcA}^{-|L|} \cdot
\left( \frac{\mathscr{D}_\mcC}{\phi^4 \cdot \tr_\mcC\psi^4} \right)^{|L|} \cdot
\zzca(M,\mathcal{R},n) \\
&~\,\stackrel{\eqref{eq:DCA}}{=}&&
\zzca(M,\mathcal{R},n) \, .
\end{align}
\end{proof}

\subsection{State spaces}
\label{subsec:state_spaces}

In this section we will show condition~\ref{lem:funct_iso:cond2} in Lemma~\ref{lem:funct_iso}, that is the inequality $\dim \zrtca(\Sigma) \geqslant \dim \zzca(\Sigma)$ for a punctured surface $\Sigma\in\Bordriben{3}(\mcC_\mcA)$. To do this, we first give an explicit description of the TQFT state spaces on both sides of the inequality.

It is enough to consider the case of $\Sigma$ having a single $\mcX=(X,\tau_1,\tau_2,\taubar{1},\taubar{2})\in\mcC_\mcA$ labelled puncture (this follows from \cite[Lem.\,15.1]{TVireBook} and the regularity of both TQFTs, see Properties~\ref{ZRTProp:regular} and~\ref{ZCAProp:regular}).
In Example~\ref{eg:S3c_example} we already looked at the case when $\Sigma$ has genus 0 so here we will assume $\Sigma$ to have genus $g > 0$.

\medskip

The vector spaces assigned to punctured surfaces by the Reshetikhin--Turaev TQFT were already mentioned in Property~\ref{zrt:coupons_in_spheres}.
For the genus-$g$ surface $\Sigma$ with an $\mcX$-labelled puncture 
    one has, with $\mathbb{L} = \bigoplus_{\Delta \in\textrm{Irr}_{\mcC_\mcA}} \Delta \otimes \Delta^*$,
\begin{align}\nonumber
&\zrtca(\Sigma) \cong
\mcC_\mcA(\one,\mcX \otimes \mathbb{L}^{\otimes g})
\\
&
\cong
\bigoplus
\mcC_\mcA(\Delta_1,\mcX\Delta_1') \otimes
\mcC_\mcA(\Delta_1',\Delta_1\Gamma_1) \otimes
\mcC_\mcA(\Gamma_1\Delta_2,\Delta_2\Gamma_2) \otimes \cdots \otimes
\mcC_\mcA(\Gamma_{g-1}\Delta_g,\Delta_g) \, ,
\label{eq:RT-state-space-direct-sum}
\end{align}
where the direct sum is over $\Delta_1,\Delta_1',\Delta_2,\dots,\Delta_g, \Gamma_1,\dots,\Gamma_{g-1}\in\textrm{Irr}_{\mcC_\mcA}$.
Explicitly, the overall isomorphism is given by
\begin{equation}
\zrtca\big( \, H_g(\varphi,\gamma_1,\gamma_2,\dots,\gamma_g) \, \big) \longmapsfrom
\varphi \otimes \gamma_1 \otimes \gamma_2 \otimes \cdots \otimes \gamma_g \, .
\end{equation}
Here $H_g(\varphi,\gamma_1,\gamma_2,\dots,\gamma_g)$ is the ribbon bordism $\varnothing\lra\Sigma$, diffeomorphic to the solid genus-$g$ handlebody and having the following ribbon graph lying at its core:
\begin{equation}
\label{eq:HgR}
\pic[1.25]{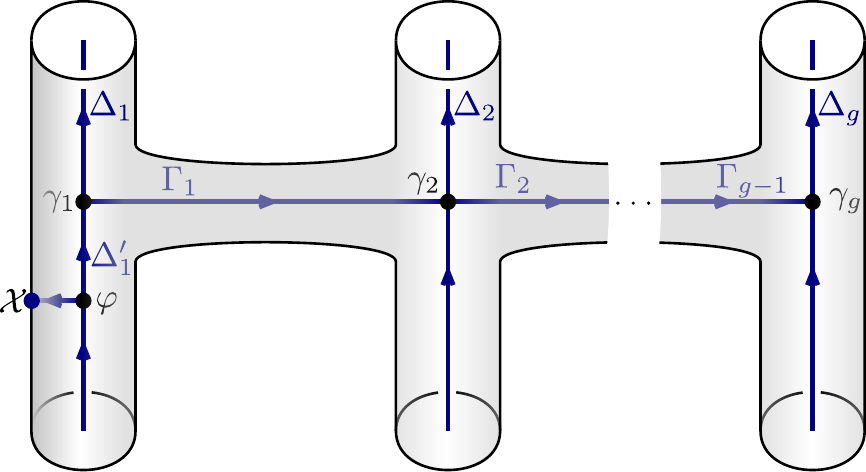} \, ,
\end{equation}
where each of the $g$ handles is depicted by a vertical cylinder with top and bottom identified.

\medskip

\begin{figure}
	\captionsetup{format=plain}
	\centering
	\pic[1.25]{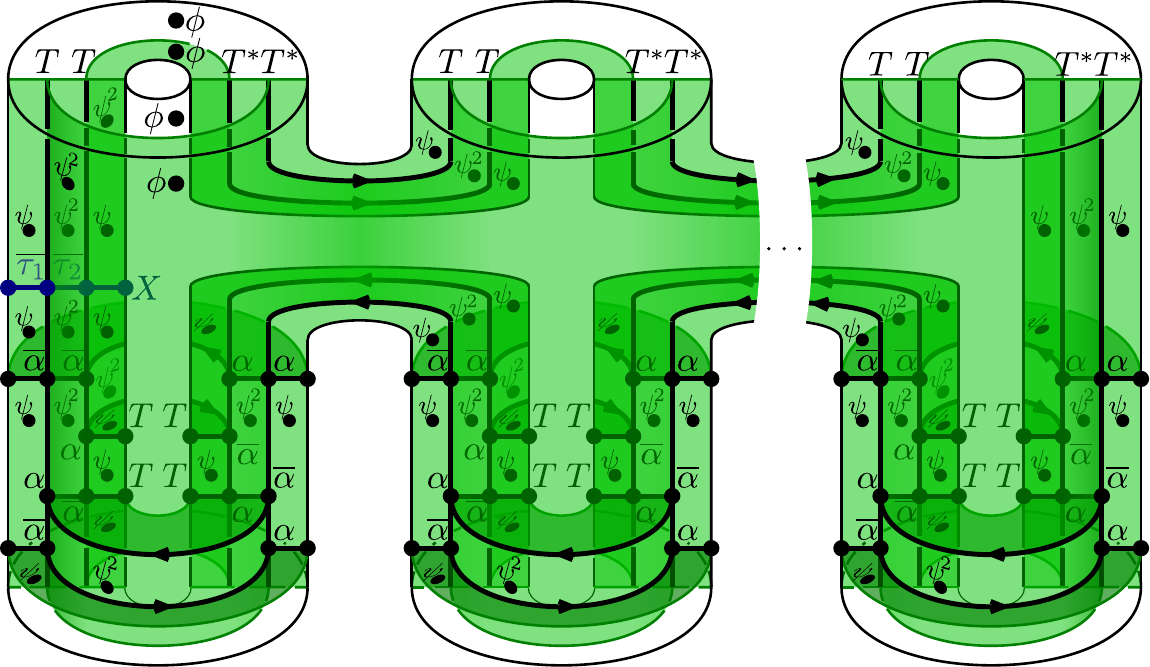}
	\caption{
	Depiction of an $\mcA$-coloured ribbon diagram $\mcT$ for the cylinder over an object $\Sigma \in \Bordriben{3}(\mcC_\mcA)$ with a single puncture labelled by $\mcX \in \mcC_\mcA$. 
	}
	\label{fig:Cyl_ribbon_diag}
	
	\captionsetup{format=plain}
	\centering
	\hspace{-10pt}
	\begin{subfigure}[b]{0.33\textwidth}
		\centering
		\pic[1.25]{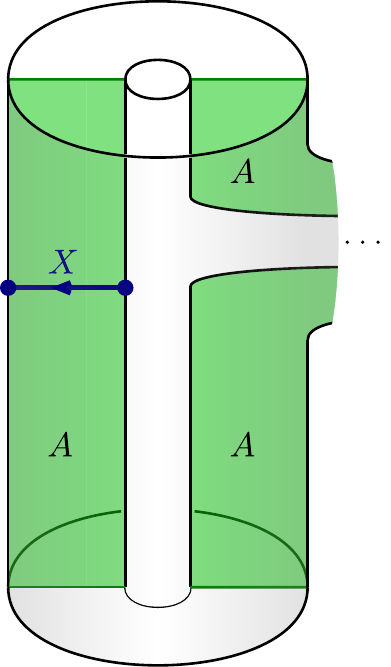}
		\caption{}
		\label{fig:stratification_1}
	\end{subfigure}
	\begin{subfigure}[b]{0.33\textwidth}
		\centering
		\pic[1.25]{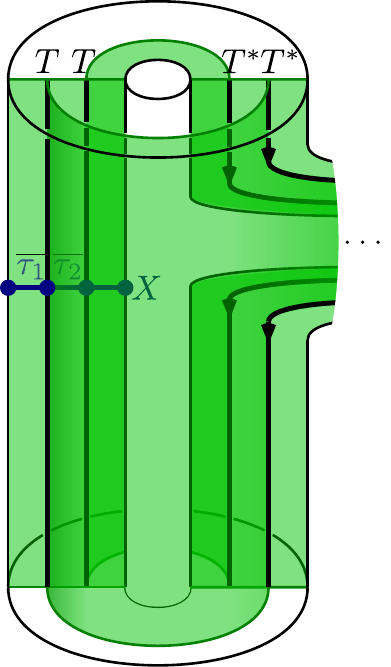}
		\caption{}
		\label{fig:stratification_2}
	\end{subfigure}
	\begin{subfigure}[b]{0.33\textwidth}
		\centering
		\pic[1.25]{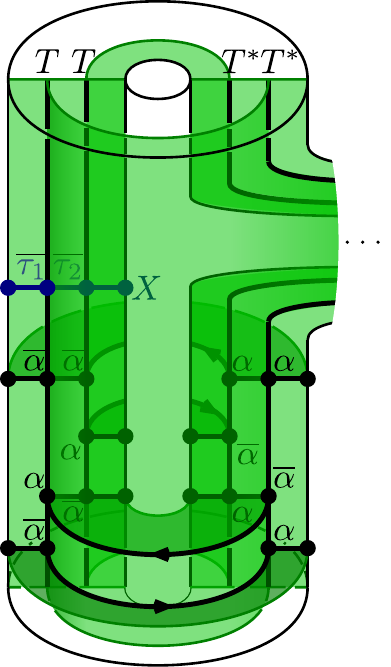}
		\caption{}
		\label{fig:stratification_3}
	\end{subfigure}
	\caption{The intermediate steps in the construction of the $\A$-coloured ribbon diagram in Figure~\ref{fig:Cyl_ribbon_diag}; only the leftmost handle is shown. 
	}
	\label{fig:stratification}
\end{figure}

We now turn to computing the state space $\zzca(\Sigma)$, for which we will use the formula \eqref{eq:ZCA_sos_as_im}.
To this end, let $\mcT$ be the $\mcA$-coloured ribbon diagram for the cylinder $C_\Sigma = \Sigma\times [0,1]$ as depicted in Figure~\ref{fig:Cyl_ribbon_diag}, where a similar presentation as that of the ribbon handlebody in~\eqref{eq:HgR} is used:
$C_\Sigma$ is represented by a closed solid genus-$g$ handlebody with an open solid handlebody removed from its interior;
each of the~$g$ handles is depicted as a vertical cylinder over an annulus with identified top and bottom.
To arrive at the ribbon diagram~$\mathcal T$ we proceed in four steps, the first three of which are illustrated in Figure~\ref{fig:stratification}, where we show only the part involving the leftmost handle:
\begin{itemize}
\item
one starts by adding to $C_\Sigma$ 
vertical $A$-labelled $2$-strata connecting the boundary components as shown in Figure~\ref{fig:stratification_1};
\item
the two boundary components are then separated by further stratification with $A$-labelled 2-strata, $T$-labelled 1-strata, and 0-strata labelled with appropriate crossing morphisms of~$\mcX$, resembling the one in the definition of the pipe functor as shown in Figure~\ref{fig:stratification_2};
\item
one adds the horizontal $\A$-labelled strata as in Figure~\ref{fig:stratification_3}, to make all $3$-strata contractible;
\item
the $\psi$- and $\phi$-insertions in~$\mathcal T$ are as in the definition of an $\mcA$-coloured ribbon diagram:
in our present case all $2$-strata are contractible, so those that touch the boundary receive a $\psi$-insertion and those that do not receive a $\psi^2$-insertion; there are four $3$-strata, all of which touch the boundary and therefore receive a $\phi$-insertion.
\end{itemize}

\begin{figure}
	\captionsetup{format=plain}
	\centering
	\hspace{-10pt}
    \begin{subfigure}[b]{0.33\textwidth}
		\centering
		\pic[1.25]{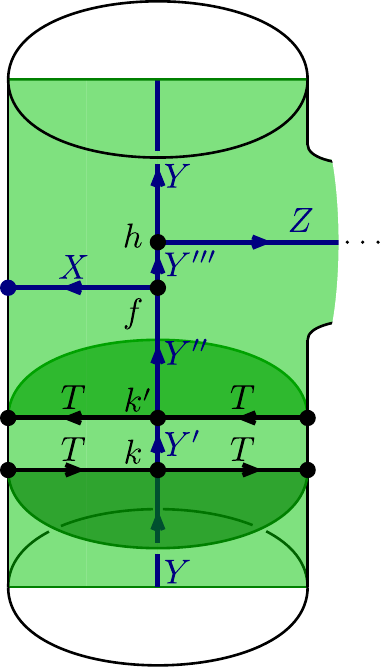}
		\caption{}
		\label{fig:HgG}
	\end{subfigure}
	\begin{subfigure}[b]{0.33\textwidth}
		\centering
		\pic[1.25]{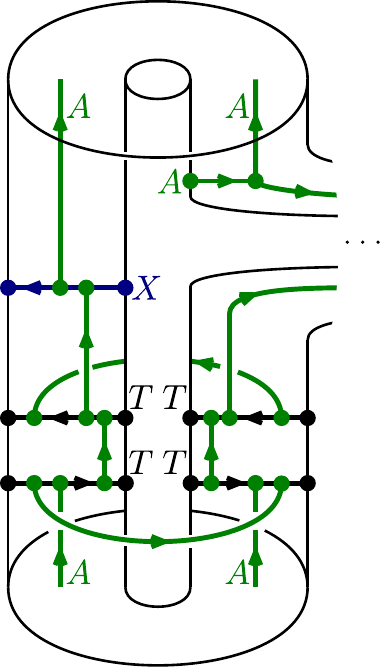}
		\caption{}
		\label{fig:SigmaG_cyl_ribbonised}
	\end{subfigure}
	\begin{subfigure}[b]{0.33\textwidth}
		\centering
		\pic[1.25]{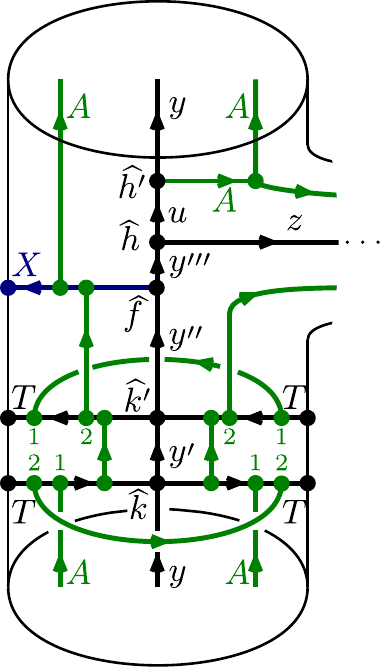}
		\caption{}
		\label{fig:HgG_ribbonised}
	\end{subfigure}
	\caption{
Bordisms used in the computation of $\zzc(\Sigma^\mcG)$. The bordism in (a) is in $\Borddefen{3}(\D^\mcC)$ and those in (b) and (c) are in $\Bordriben{3}(\mcC)$.
}
	\label{fig:zzc-Sigma_computation}
	
	\captionsetup{format=plain}
	\centering
	\hspace{-10pt}
	\begin{subfigure}[b]{0.33\textwidth}
		\centering
		\pic[1.25]{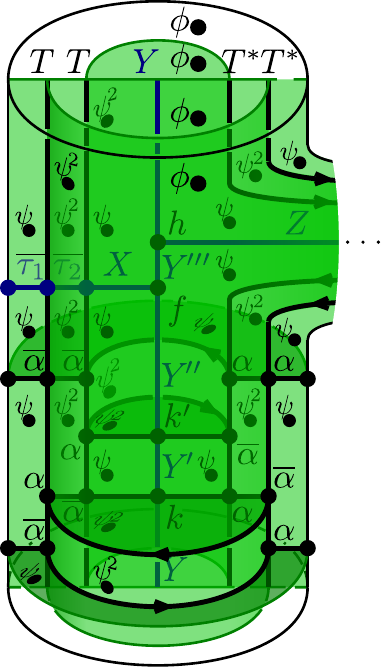}
		\caption{}
		\label{fig:ss_calc_1}
	\end{subfigure}
	\begin{subfigure}[b]{0.33\textwidth}
		\centering
		\pic[1.25]{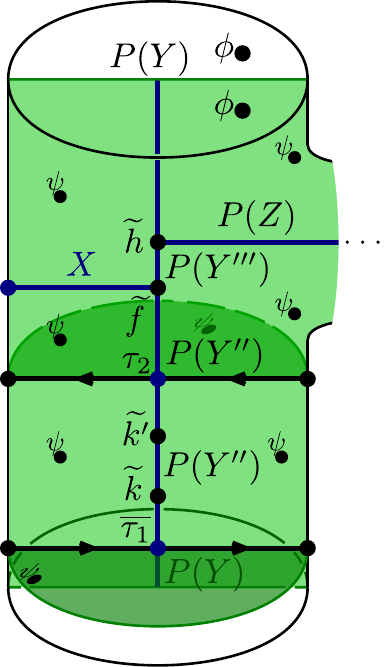}
		\caption{}
		\label{fig:ss_calc_2}
	\end{subfigure}
	\begin{subfigure}[b]{0.33\textwidth}
		\centering
		\pic[1.25]{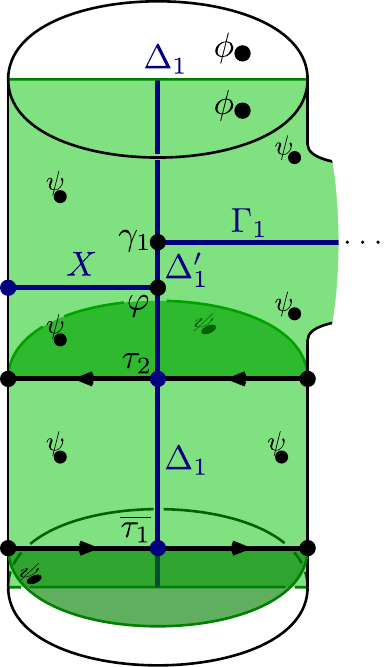}
		\caption{}
		\label{fig:ss_calc_3}
	\end{subfigure}
	\caption{
		Calculation of the space $\zzca(\Sigma)$ for a punctured surface $\Sigma\in\Bordriben{3}(\mcC_\mcA)$.
	}
	\label{fig:ss_calc}
\end{figure}

Let $\mcG$ be the $\mcA$-coloured 1-skeleton of the punctured surface $\Sigma$, obtained by restricting the ribbon diagram $\mcT$ to its incoming (or equivalently outgoing) boundary.
Denote the foamifications of~$\Sigma$ and~$C_\Sigma$ by $\Sigma^\mcG := F(C_\Sigma, \mcG)$ and $C_\Sigma^\mcT := F(C_\Sigma,\mcT)$, respectively. 
We need to compute the image of the idempotent
\begin{equation}
    \Psi_\mcG^\mcG := \big[\zzc(C_\Sigma^\mcT)\colon \zzc(\Sigma^\mcG)\lra\zzc(\Sigma^\mcG)\big] \, .
\end{equation}
As an intermediate step one therefore needs to know the vector space $\zzc(\Sigma^\mcG)$.
To this end, let $H_g^\mcG(k,k',f,h,\dots)$ be the solid defect genus-$g$ handlebody depicted in Figure~\ref{fig:HgG}, seen as a defect bordism $\varnothing\lra\Sigma^\mcG$, with the lines labelled by $A$-$A$-bimodules $Y, Y', Y'', Y''', Z, \dots$ and the points by $A$-$A\otimes A$-bimodule morphisms
\begin{equation}
k\colon T \otimes_1 Y \lra Y' \otimes_2 T \, , \quad
k'\colon Y' \otimes_0 T \lra T \otimes_2 Y'' \, , \quad\dots
\end{equation}
and $A$-$A$-bimodule morphisms
\begin{equation}
f\colon Y'' \lra X \otimes_A Y''' \, , \qquad
h\colon Y''' \lra Y \otimes_A Z \, , \quad\dots \, .
\end{equation}

\begin{lemma}
The vector space $\zzc(\Sigma^\mcG)$ is spanned by elements of the form $\zzc(H_g^\mcG(k,k',f,h,\dots))$.
\end{lemma}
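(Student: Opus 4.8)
The plan is to combine the universal-construction description of the defect TQFT $\zzc$ with the fact that Reshetikhin--Turaev state spaces are spanned by handlebodies (Property~\ref{zrt:coupons_in_spheres}), and then to normalise the resulting ribbon graphs to the standard ``spine'' form of Figure~\ref{fig:HgG}.

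First I would fix a ribbonisation $R(\Sigma^\mcG,\tau)$ of the defect surface $\Sigma^\mcG$ and recall from Construction~\ref{constr:DefectRT} and \eqref{eq:ZC_sos_as_im} that $\zzc(\Sigma^\mcG)\cong\im\Phi_\tau^\tau$, where $\Phi_\tau^\tau=\zrt\big(R(C_{\Sigma^\mcG},t)\big)$ is the idempotent obtained from the ribbonised defect cylinder $C_{\Sigma^\mcG}=\Sigma^\mcG\times[0,1]$; moreover the surjection $q\colon\zrt(R(\Sigma^\mcG,\tau))\twoheadrightarrow\zzc(\Sigma^\mcG)$ from the colimit satisfies $q\big(\zrt(R(B,t'))\big)=\zzc(B)$ for every defect bordism $B\colon\varnothing\to\Sigma^\mcG$, and $i\circ q=\Phi_\tau^\tau$ for the corresponding injection $i$. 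By Property~\ref{zrt:coupons_in_spheres}, $\zrt(R(\Sigma^\mcG,\tau))$ is spanned by the vectors $\zrt(H)$ with $H\colon\varnothing\to R(\Sigma^\mcG,\tau)$ a solid genus-$g$ handlebody carrying an embedded $\mcC$-coloured ribbon graph; applying $\Phi_\tau^\tau$, which fixes $\im\Phi_\tau^\tau\cong\zzc(\Sigma^\mcG)$ pointwise, it follows that $\zzc(\Sigma^\mcG)$ is spanned by the vectors $\zrt\big(R(C_{\Sigma^\mcG},t)\circ H\big)$.

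Next I would analyse the ribbon bordism $R(C_{\Sigma^\mcG},t)\circ H$. Gluing the collar $C_{\Sigma^\mcG}$ onto $\pd H$ does not change the diffeomorphism type, so this is again a solid genus-$g$ handlebody, now carrying the ribbon graph of $H$ together with the ribbonised foam of the cylinder --- the $A$-coloured strands produced by the $2$-strata of $\Sigma^\mcG$, the $T$- and $X$-coloured strands from its $1$-strata, the coupons from its $0$-strata, and the $\psi$-insertions --- which initially sits in a collar of $\pd H$. Using that the handlebody deformation retracts onto the spine $\Gamma$ of Figure~\ref{fig:HgG}, I would isotope the whole ribbon graph into a regular neighbourhood of $\Gamma$, dragging the foam strands along the handles so that they assume the ``pipe'' configuration dictated by $\mcG$ (cf.\ Figure~\ref{fig:stratification}). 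By regularity of $\zrt$ (Property~\ref{ZRTProp:regular}) and the $\Delta$-separable symmetric Frobenius structure of $A$, the portion of the ribbon graph lying in each vertex ball of $\Gamma$ can then be replaced by a single coupon and the strands running along each edge of $\Gamma$ can be fused into a single strand; the $A$-strands of the foam flanking each edge endow the resulting edge object with an $A$-$A$-bimodule structure $Y,Y',\dots$, the vertices where a $T$-strand crosses an edge become $A$-$A\otimes A$-bimodule morphisms of the type of $k,k'$, and the vertices where the $X$-strand enters or where two edges meet along a handle become the $A$-$A$-bimodule morphisms $f,h,\dots$. This identifies $\zrt\big(R(C_{\Sigma^\mcG},t)\circ H\big)$ with a $\Bbbk$-linear combination of vectors $\zrt\big(R(H_g^\mcG(k,k',f,h,\dots),t'')\big)$; since the left-hand side is already $\Phi_\tau^\tau$-fixed and $H_g^\mcG(k,k',f,h,\dots)$ is a defect bordism $\varnothing\to\Sigma^\mcG$, applying $\Phi_\tau^\tau$ and using $i\circ q=\Phi_\tau^\tau$ exhibits it as a combination of the $i\big(\zzc(H_g^\mcG(k,k',f,h,\dots))\big)$. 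Hence $\zzc(\Sigma^\mcG)$ lies in the span of the vectors $\zzc(H_g^\mcG(k,k',f,h,\dots))$; the reverse inclusion is immediate.

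The main obstacle is the geometric normalisation in the third paragraph: one has to verify that after isotoping to the spine, the foam of $R(C_{\Sigma^\mcG},t)$ assembles, together with the a priori arbitrary ribbon graph carried by $H$, into exactly the decorated handlebody $H_g^\mcG(k,k',f,h,\dots)$ of Figure~\ref{fig:HgG}, including the prescribed source and target objects for each of the coupons $k,k',f,h$. Concretely this means tracking the local model of the cylinder ribbon diagram $\mcT$ (Figure~\ref{fig:Cyl_ribbon_diag}) handle by handle and checking that the bimodule labels and crossing-type morphisms produced by graphical calculus are precisely those appearing in the definition of $H_g^\mcG$. The remaining steps are the standard interplay between the universal construction and handlebody presentations.
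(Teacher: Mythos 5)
Your proposal is correct and follows essentially the same route as the paper's own (terse, figure-driven) proof: both start from $\zzc(\Sigma^\mcG)\cong\im\Phi_\tau^\tau$, apply the cylinder idempotent to the standard handlebody spanning set of $\zrt(R(\Sigma^\mcG,\tau))$, observe that the glued foam strands give the core lines an $A$-$A$-bimodule structure (the paper phrases this as replacing $y$ by the induced bimodule $A\otimes y\otimes A$), and normalise by graphical calculus to the spine form of Figure~\ref{fig:HgG}. The only difference is presentational: you spell out the geometric normalisation to the spine and the role of the separable Frobenius structure in fusing strands, where the paper compresses this to ``one verifies that the morphisms get mapped to morphisms commuting with the $A$-actions'' together with a pointer to Figure~\ref{fig:HgG_ribbonised}; the paper also explicitly mentions decomposing the induced bimodules into simples, which is not strictly needed since $H_g^\mcG$ allows arbitrary bimodule labels on the edges.
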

\begin{proof}
As described in Construction~\ref{constr:DefectRT}, 
the state space $\zzc(\Sigma^\mcG)$ is the image of the idempotent $\Phi_\tau^\tau$ acting on $\zrt(R(\Sigma, \tau))$ as in \eqref{eq:ZC_sos_as_im}. 
In the present case, $\Phi_\tau^\tau$ is given by applying $\zrt$ to the bordism in Figure~\ref{fig:SigmaG_cyl_ribbonised}.
Acting on a spanning set of the state space $\zrt(R(\Sigma, \tau))$ in terms of handlebodies with ribbon graphs along their core (similar to \eqref{eq:HgR}, but for $\mathcal{C}$ and with $T$-insertions in addition to the $X$-insertion) results in the handlebody $\varnothing \lra R(\Sigma, \tau)$ shown in Figure~\ref{fig:HgG_ribbonised}.
This effectively replaces the objects $y,y',\dots$ by the induced bimodules $A \otimes y \otimes A$, $A \otimes y' \otimes A$, \dots. One verifies that the morphisms $\widehat{k}, \dots$ get mapped to morphisms commuting with $A$-actions as required by the defect bordism in Figure~\ref{fig:HgG} (but with $Y$ replaced by $A \otimes y \otimes A$, etc.).
Expanding the induced bimodules into simple bimodules shows the claim.
\end{proof}

By the previous lemma, the image of $\Psi_\mcG^\mcG$ is spanned by vectors of the form \begin{equation}
\label{eq:img_HgG}
    \Psi_\mcG^\mcG\big(\zzc(H_g^\mcG(k,k',f,h,\dots))\big) = \zzc\big(C_\Sigma^\mcT\circ H_g^\mcG(k,k',f,h,\dots)\big) \, .
\end{equation}
The defect handlebody in the argument of $\zzc$
on the right-hand side of \eqref{eq:img_HgG} is shown in Figure~\ref{fig:ss_calc_1}, where, as in Figure~\ref{fig:stratification}, only the first handle is depicted.
Using the pipe functor $P\colon\mcACA\lra\mcC_\mcA$, the stratification inside the handlebody can be exchanged for the one depicted in Figure~\ref{fig:ss_calc_2} without changing the value under $\zzc$. Here 
the points are labelled by morphisms
\begin{align}
&\widetilde{f} = \quad\pic[1.25]{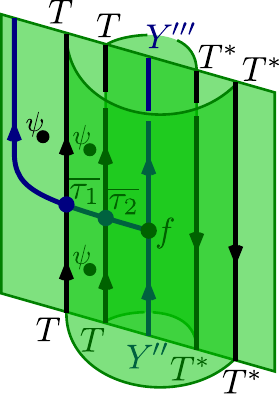},
&&\widetilde{h} = \quad\pic[1.25]{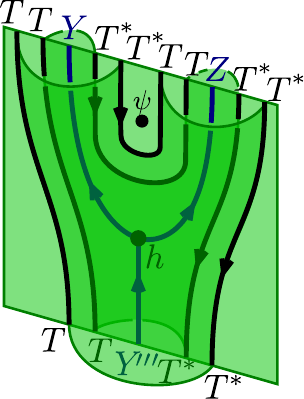},\\
&\widetilde{k} = \phi \cdot \pic[1.25]{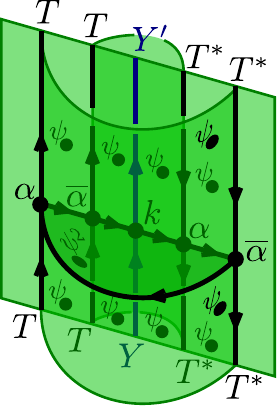},
&&\widetilde{k'} = \phi \cdot \pic[1.25]{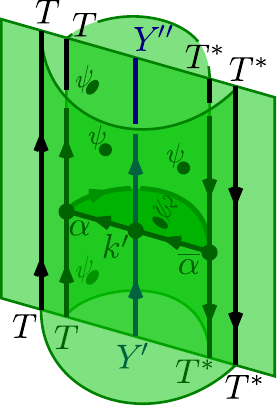},
\end{align}
all of which satisfy \eqref{eq:CA_morphism_cond} and are therefore morphisms in $\mcC_\mcA$.
By the semisimplicity of $\mcC_\mcA$, one can decompose $\Psi_\mcG^\mcG(H_g^\mcG)$ into a linear combination of handlebodies with stratification as in Figure~\ref{fig:ss_calc_3}, where -- using the same labels as in \eqref{eq:HgR} -- $\Delta_1, \Delta'_1, \Gamma_1$ are simple objects of $\mcC_\mcA$, and $\varphi\colon\Delta_1\lra X \otimes_A \Delta'_1$ and $\gamma_1\colon\Delta'_1 \lra \Delta_1 \otimes_A \Gamma$ are morphisms in $\mcC_\mcA$.

Denote the handlebody in Figure~\ref{fig:ss_calc_3} by $H_g^\A(\varphi,\gamma_1,\gamma_2,\dots,\gamma_g)$.
Then, in summary, $\zzca(\Sigma)$ is spanned by vectors of the form $\zzca(H_g^\A(\varphi,\gamma_1,\gamma_2,\dots,\gamma_g))$.

\begin{lemma}
\label{lem:RT-orb_state_spaces}
One has
\begin{equation}
    \dim \zrtca(\Sigma) \geqslant \dim \zzca(\Sigma) \, .
\end{equation}
\end{lemma}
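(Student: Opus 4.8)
The plan is to produce a surjective linear map $\Xi\colon\zrtca(\Sigma)\lra\zzca(\Sigma)$; its existence gives the claimed inequality at once. By the reduction recorded above (using \cite[Lem.\,15.1]{TVireBook} together with the regularity of both graph TQFTs, Properties~\ref{ZRTProp:regular} and~\ref{ZCAProp:regular}) it suffices to treat a surface~$\Sigma$ with a single $\mcX$-labelled puncture, and Example~\ref{eg:S3c_example} already settles the genus-$0$ case, so I assume $g>0$.

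First I would fix, for all tuples of simple objects $\Delta_1,\Delta_1',\dots,\Delta_g,\Gamma_1,\dots,\Gamma_{g-1}\in\textrm{Irr}_{\mcC_\mcA}$, a basis of each of the finite-dimensional morphism spaces $\mcC_\mcA(\Delta_1,\mcX\Delta_1')$, $\mcC_\mcA(\Delta_1',\Delta_1\Gamma_1)$, \dots, $\mcC_\mcA(\Gamma_{g-1}\Delta_g,\Delta_g)$ occurring in~\eqref{eq:RT-state-space-direct-sum}. By~\eqref{eq:RT-state-space-direct-sum} the vectors $H_g(\varphi,\gamma_1,\dots,\gamma_g)$, with $(\varphi,\gamma_1,\dots,\gamma_g)$ running over the associated product bases and the $\Delta$'s and $\Gamma$'s over $\textrm{Irr}_{\mcC_\mcA}$, then form a basis of $\zrtca(\Sigma)$. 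I would define $\Xi$ on this basis by
\begin{equation*}
\Xi\big(H_g(\varphi,\gamma_1,\dots,\gamma_g)\big) := \zzca\big(H_g^\A(\varphi,\gamma_1,\dots,\gamma_g)\big) \, ,
\end{equation*}
which makes sense because the defect handlebody $H_g^\A$ constructed before the lemma is parametrized by exactly the same data of simple objects of $\mcC_\mcA$ and morphisms $\varphi\colon\Delta_1\lra X\otimes_A\Delta_1'$, $\gamma_i\colon\Delta_i'\lra\Delta_i\otimes_A\Gamma_i$, etc.; since $\Xi$ is prescribed on a basis, no consistency condition has to be checked.

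It then remains to see that $\Xi$ is surjective. Because $\zzca$ is a regular graph TQFT (Property~\ref{ZCAProp:regular}), the invariant $\zzca(H_g^\A(\varphi,\gamma_1,\dots,\gamma_g))$ depends multilinearly on the coupon labels $\varphi,\gamma_1,\dots,\gamma_g$; hence for \emph{arbitrary} morphisms $\varphi\in\mcC_\mcA(\Delta_1,\mcX\Delta_1')$ and $\gamma_i$ in the corresponding Hom spaces, the vector $\zzca(H_g^\A(\varphi,\gamma_1,\dots,\gamma_g))$ is a linear combination of the basis images of~$\Xi$ and therefore lies in $\im\Xi$. Since the analysis preceding the lemma shows that $\zzca(\Sigma)$ is spanned precisely by the vectors $\zzca(H_g^\A(\varphi,\gamma_1,\dots,\gamma_g))$, we obtain $\im\Xi=\zzca(\Sigma)$, and a surjective linear map forces $\dim\zrtca(\Sigma)\geqslant\dim\zzca(\Sigma)$.

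The only point that genuinely needs care — and which has essentially been arranged by constructing $H_g^\A$ as the orbifold counterpart of $H_g$ — is that the spanning set of $\zzca(\Sigma)$ obtained from the pipe functor and the skeleton $\mcT$ is indexed by the same combinatorial data that indexes a basis of $\zrtca(\Sigma)$ in~\eqref{eq:RT-state-space-direct-sum}; in matching the two one uses the canonical pivotal identifications relating Hom spaces such as $\mcC_\mcA(\Delta_1,\mcX\Delta_1')$ and $\mcC_\mcA(\one,\mcX\Delta_1'\Delta_1^*)$. I expect this bookkeeping, rather than any new categorical input, to be the main obstacle; the multilinearity invoked above is exactly the regularity of the two graph TQFTs.
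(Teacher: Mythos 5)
Your proof is correct and takes essentially the same approach as the paper: both exhibit a surjection onto $\zzca(\Sigma)$ built from the handlebody vectors $\zzca(H_g^\A(\varphi,\gamma_1,\dots,\gamma_g))$, using that these span $\zzca(\Sigma)$ and that $\zrtca(\Sigma)$ is isomorphic to the direct sum in~\eqref{eq:RT-state-space-direct-sum}. The only cosmetic difference is that you define the map on a chosen basis of $\zrtca(\Sigma)$ and then invoke multilinearity (regularity) of $\zzca$, whereas the paper defines the map directly on the tensor-product space $V$.
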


\begin{proof}
Denote by $V$ the direct sum on the right-hand side of \eqref{eq:RT-state-space-direct-sum}.
Define the linear map $f\colon V \lra \zzca(\Sigma)$ 
via
\begin{equation}
\varphi \otimes \gamma_1 \otimes \gamma_2 \otimes \cdots \otimes \gamma_g 
\lmt
\zzca\big( \,H_g^\A(\varphi,\gamma_1,\gamma_2,\dots,\gamma_g) \, \big)
\, .
\end{equation}
Since the elements on the right-hand side span $\zzca(\Sigma)$, the map $f$ is surjective. 
Since by \eqref{eq:RT-state-space-direct-sum} we have $\zrtca(\Sigma) \cong V$, the claim follows.
\end{proof}

This completes the proof of Theorem~\ref{thm:tqfts_isomorphic}.

\newpage

\appendix

\section{Defining identities}
\label{app:identities}

\begin{figure}[!h]
    \captionsetup{format=plain}
	\captionsetup[subfigure]{labelformat=empty}
	\centering
	\begin{subfigure}[b]{0.5\textwidth}
		\centering
		\pic[1.25]{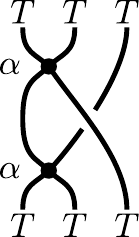}$=$\pic[1.25]{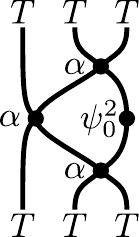}
		\caption{}
		\label{eq:O1}
	\end{subfigure}\hspace{-2em}\raisebox{5.5em}{(O1)}\\
	\begin{subfigure}[b]{0.4\textwidth}
		\centering
		\pic[1.25]{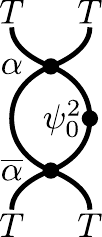}$=$\pic[1.25]{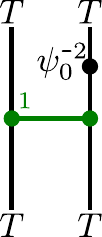}
		\caption{}
		\label{eq:O2}
	\end{subfigure}\hspace{-2em}\raisebox{5.5em}{(O2)}
	\begin{subfigure}[b]{0.4\textwidth}
		\centering
		\pic[1.25]{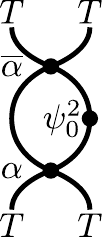}$=$\pic[1.25]{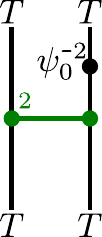}
		\caption{}
		\label{eq:O3}
	\end{subfigure}\hspace{-2em}\raisebox{5.5em}{(O3)}\\
	\begin{subfigure}[b]{0.4\textwidth}
		\centering
		\pic[1.25]{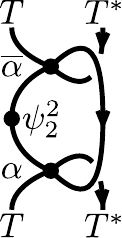}$=$\pic[1.25]{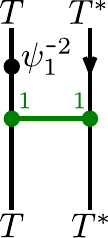}
		\caption{}
		\label{eq:O4}
	\end{subfigure}\hspace{-2em}\raisebox{5.5em}{(O4)}
	\begin{subfigure}[b]{0.4\textwidth}
		\centering
		\pic[1.25]{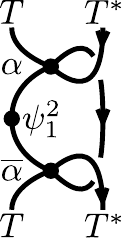}$=$\pic[1.25]{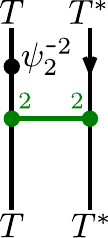}
		\caption{}
		\label{eq:O5}
	\end{subfigure}\hspace{-2em}\raisebox{5.5em}{(O5)}\\
	\begin{subfigure}[b]{0.4\textwidth}
		\centering
		\pic[1.25]{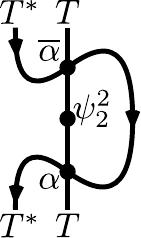}$=$\pic[1.25]{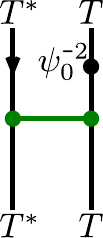}
		\caption{}
		\label{eq:O6}
	\end{subfigure}\hspace{-2em}\raisebox{5.5em}{(O6)}
	\begin{subfigure}[b]{0.4\textwidth}
		\centering
		\pic[1.25]{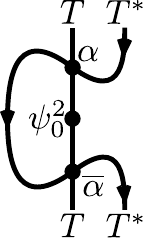}$=$\pic[1.25]{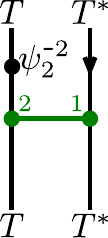}
		\caption{}
		\label{eq:O7}
	\end{subfigure}\hspace{-2em}\raisebox{5.5em}{(O7)}\\
	\begin{subfigure}[b]{0.8\textwidth}
		\centering
		\pic[1.25]{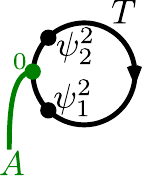}$=$\pic[1.25]{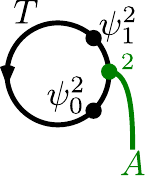}$=$\pic[1.25]{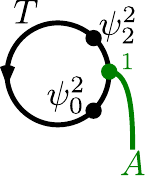}$=$\pic[1.25]{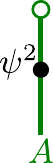}$\cdot \, \phi^{-2}$
		\caption{}
		\label{eq:O8}
	\end{subfigure}\hspace{-1em}\raisebox{4.25em}{(O8)}

\vspace*{-1em}

	\caption{%
    Defining conditions on an orbifold datum $\A = (A,T,\alpha,\overline{\alpha}, \psi,\phi)$ in a given modular fusion category~$\mcC$, cf.\ Section~\ref{subsubsec:SpecialOrbifoldData}.}
	\label{fig:SpecialOrbifoldDatum}
\end{figure}

\begin{figure}
    \captionsetup{format=plain}
	\captionsetup[subfigure]{labelformat=empty}
	\centering
	\vspace{-50pt}
	\begin{subfigure}[b]{0.5\textwidth}
		\centering
		\includegraphics[scale=0.85, valign=c]{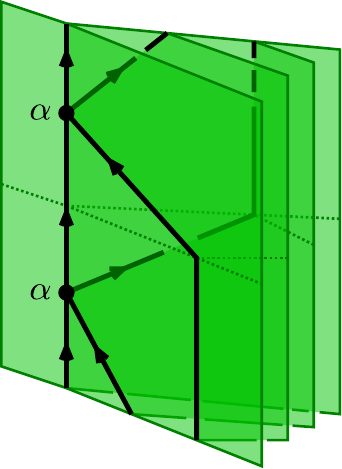} $=$
		\includegraphics[scale=0.85, valign=c]{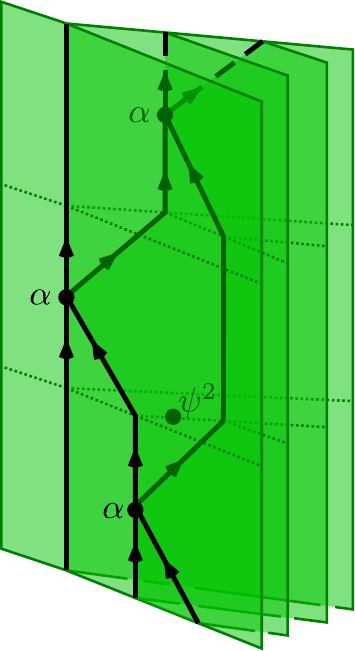}
		\caption{}
		\label{eq:OPSI1}
	\end{subfigure}\raisebox{8em}{(O1)}\\
	\vspace{-15pt}
	\hspace{-60pt}
	\begin{subfigure}[b]{0.53\textwidth}
		\centering
		\includegraphics[scale=0.85, valign=c]{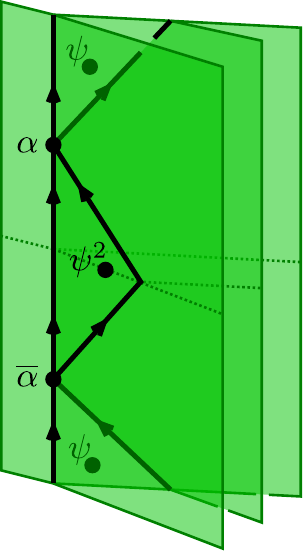} $=$
		\includegraphics[scale=0.85, valign=c]{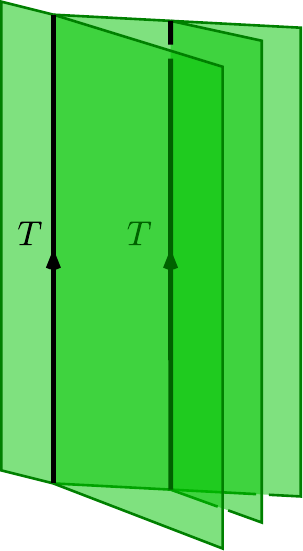}
		\caption{}
		\label{eq:OPSI2}
	\end{subfigure}\hspace{-2em}\raisebox{6.5em}{(O2)}
	\hspace{-15pt}
	\begin{subfigure}[b]{0.53\textwidth}
		\centering
		\includegraphics[scale=0.85, valign=c]{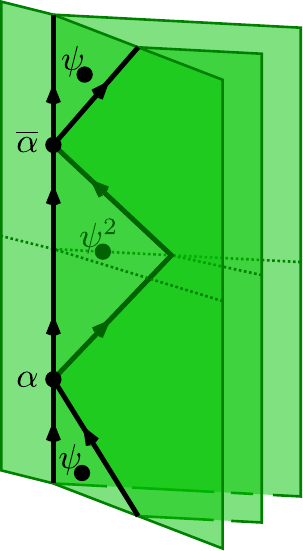} $=$
		\includegraphics[scale=0.85, valign=c]{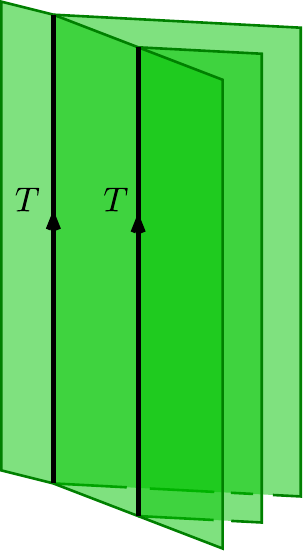}
		\caption{}
		\label{eq:OPSI3}
	\end{subfigure}\hspace{-2em}\raisebox{6.5em}{(O3)}\\
	\vspace{-15pt}
	\hspace{-60pt}
	\begin{subfigure}[b]{0.53\textwidth}
		\centering
		\includegraphics[scale=0.85, valign=c]{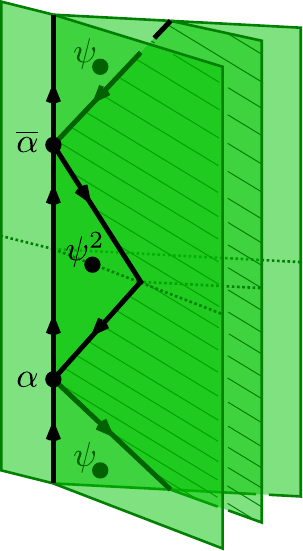} $=$
		\includegraphics[scale=0.85, valign=c]{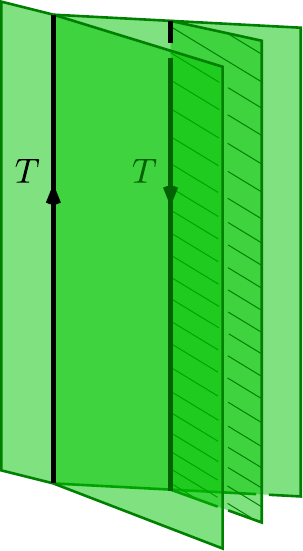}
		\caption{}
		\label{eq:OPSI4}
	\end{subfigure}\hspace{-2em}\raisebox{6.5em}{(O4)}
	\hspace{-15pt}
	\begin{subfigure}[b]{0.53\textwidth}
		\centering
		\includegraphics[scale=0.85, valign=c]{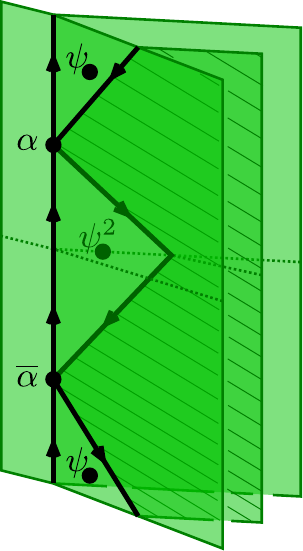} $=$
		\includegraphics[scale=0.85, valign=c]{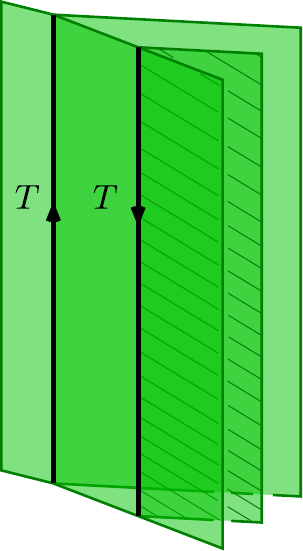}
		\caption{}
		\label{eq:OPSI5}
	\end{subfigure}\hspace{-2em}\raisebox{6.5em}{(O5)}\\
	\vspace{-15pt}
	\hspace{-60pt}
	\begin{subfigure}[b]{0.53\textwidth}
		\centering
		\includegraphics[scale=0.8, valign=c]{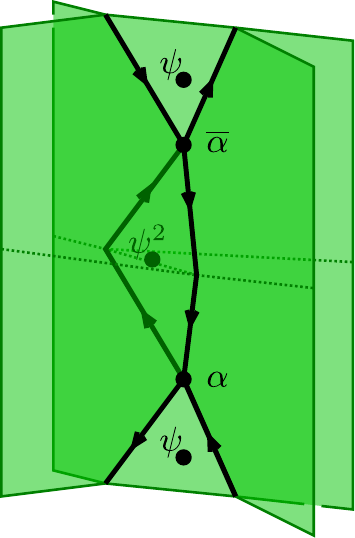} $=$
		\includegraphics[scale=0.8, valign=c]{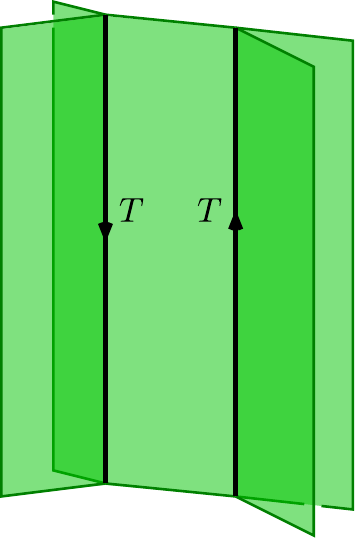}
		\caption{}
		\label{eq:OPSI6}
	\end{subfigure}\hspace{-1.5em}\raisebox{6em}{(O6)}
	\hspace{-15pt}
	\begin{subfigure}[b]{0.53\textwidth}
		\centering
		\includegraphics[scale=0.8, valign=c]{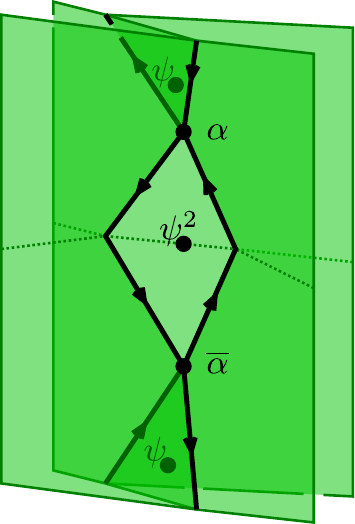} $=$
		\includegraphics[scale=0.8, valign=c]{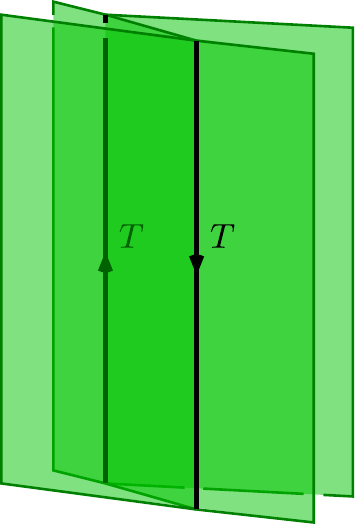}
		\caption{}
		\label{eq:OPSI7}
	\end{subfigure}\hspace{-1.5em}\raisebox{6em}{(O7)}\\
	\vspace{-15pt}
	\hspace{-60pt}
	\begin{subfigure}[b]{1.0\textwidth}
		\centering
		\includegraphics[scale=0.8, valign=c]{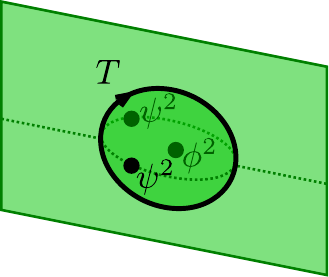} $=$
		\includegraphics[scale=0.8, valign=c]{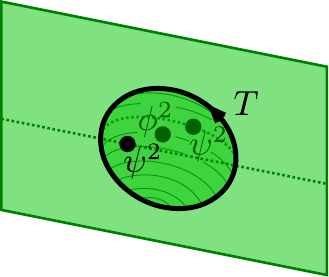} $=$
		\includegraphics[scale=0.8, valign=c]{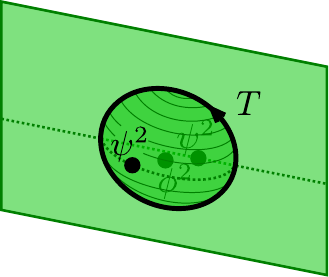} $=$
		\includegraphics[scale=0.8, valign=c]{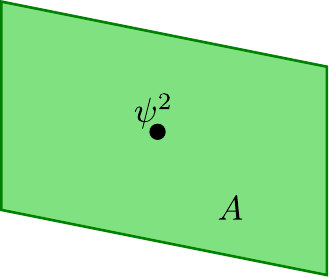}
		\caption{}
		\label{eq:OPSI8}
	\end{subfigure}\hspace{-2em}\raisebox{4em}{(O8)}
	\vspace{-15pt}
	\caption{%
	Defining conditions on an orbifold datum $\A=(A,T,\a,\abar,\psi,\phi)$ as local changes in defect configurations.
	The application of $\zzc$ on each side of the equations is implied.	
}
	\label{fig:SpecialOrbifoldDataPhiPsi}
\end{figure}

\begin{figure}
	\captionsetup[subfigure]{labelformat=empty}
	\centering
	\begin{subfigure}[b]{0.45\textwidth}
		\centering
		\pic[1.25]{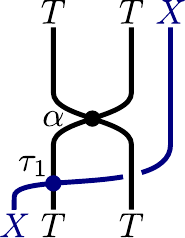}$=$\pic[1.25]{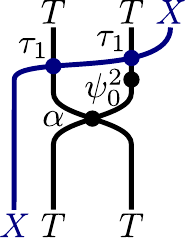}
		\caption{}
		\label{eq:T1}
	\end{subfigure}\hspace{-2em}\raisebox{5.5em}{(T1)}
	\begin{subfigure}[b]{0.45\textwidth}
		\centering
		\pic[1.25]{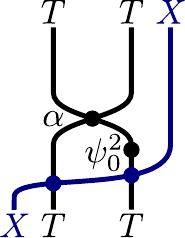}$=$\pic[1.25]{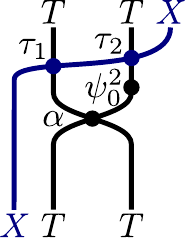}
		\caption{}
		\label{eq:T2}
	\end{subfigure}\hspace{-2em}\raisebox{5.5em}{(T2)}\\
	\begin{subfigure}[b]{0.45\textwidth}
		\centering
		\pic[1.25]{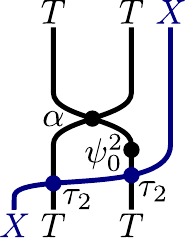}$=$\pic[1.25]{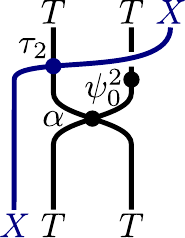}
		\caption{}
		\label{eq:T3}
	\end{subfigure}\hspace{-2em}\raisebox{5.5em}{(T3)}\\
	\begin{subfigure}[b]{0.35\textwidth}
		\centering
		\pic[1.25]{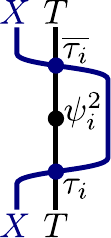}$=$\pic[1.25]{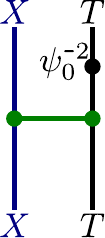}
		\caption{}
		\label{eq:T4}
	\end{subfigure}\hspace{-1em}\raisebox{5.5em}{(T4)}
	\begin{subfigure}[b]{0.35\textwidth}
		\centering
		\pic[1.25]{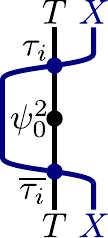}$=$\pic[1.25]{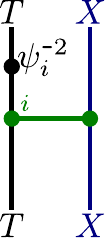}
		\caption{}
		\label{eq:T5}
	\end{subfigure}\hspace{-1em}\raisebox{5.5em}{(T5)}\hspace{4em}\raisebox{5.5em}{$i \in \{1,2\}$}\\
	\begin{subfigure}[b]{0.35\textwidth}
		\centering
		\pic[1.25]{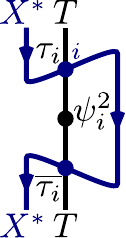}$=$\pic[1.25]{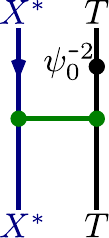}
		\caption{}
		\label{eq:T6}
	\end{subfigure}\hspace{-1em}\raisebox{5.5em}{(T6)}
	\begin{subfigure}[b]{0.35\textwidth}
		\centering
		\pic[1.25]{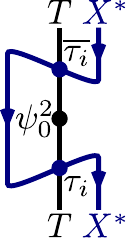}$=$\pic[1.25]{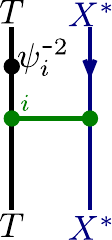}
		\caption{}
		\label{eq:T7}
	\end{subfigure}\hspace{-1em}\raisebox{5.5em}{(T7)}\hspace{4em}\raisebox{5.5em}{$i \in \{1,2\}$}
	\caption{%
	    Defining identities for the category $\mcC_\mcA$ constructed from an orbifold datum~$\A$ in~$\mcC$, cf.\ Section~\ref{subsubsec:SpecialOrbifoldData}. 
	} 
	\label{fig:CA_identities} 
\end{figure}

\begin{figure}
	\captionsetup[subfigure]{labelformat=empty}
	\centering
	\vspace{-15pt}
	\makebox[1.2\textwidth]{
		\hspace{-100pt}
		\begin{subfigure}[b]{0.6\textwidth}
			\centering
			\includegraphics[scale=1.0, valign=c]{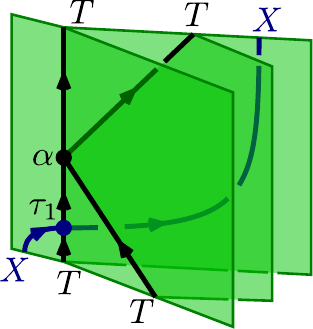} $=$
			\includegraphics[scale=1.0, valign=c]{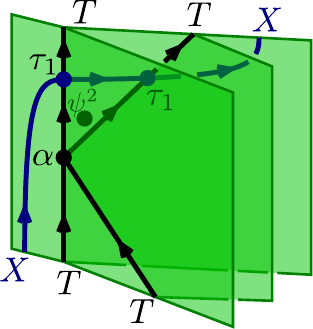}
			\caption{}
			\label{eq:TPSI1}
		\end{subfigure}\hspace{-2em}\raisebox{5.5em}{(T1)}
		\hspace{-30pt}
		\begin{subfigure}[b]{0.6\textwidth}
			\centering
			\includegraphics[scale=1.0, valign=c]{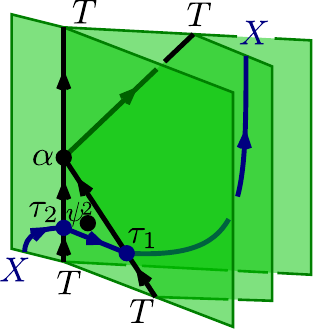} $=$
			\includegraphics[scale=1.0, valign=c]{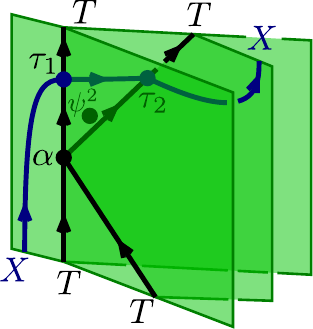}
			\caption{}
			\label{eq:TPSI2}
		\end{subfigure}\hspace{-2em}\raisebox{5.5em}{(T2)}}\\
	\vspace{-10pt}
	\begin{subfigure}[b]{0.6\textwidth}
		\centering
		\includegraphics[scale=1.0, valign=c]{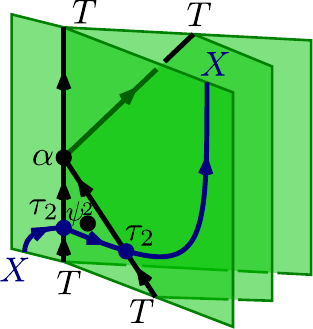} $=$
		\includegraphics[scale=1.0, valign=c]{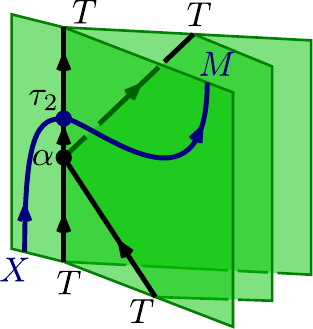}
		\caption{}
		\label{eq:TPSI3}
	\end{subfigure}\hspace{-2em}\raisebox{5.5em}{(T3)}\\ 
	\vspace{-15pt}
	\begin{subfigure}[b]{0.95\textwidth}
		\centering
		\includegraphics[scale=1.0, valign=c]{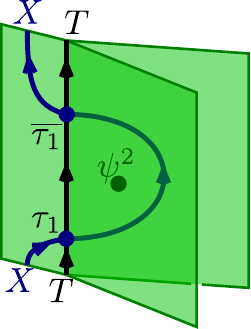} $=$
		\includegraphics[scale=1.0, valign=c]{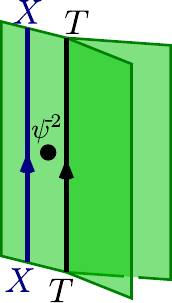}  , \quad
		\includegraphics[scale=1.0, valign=c]{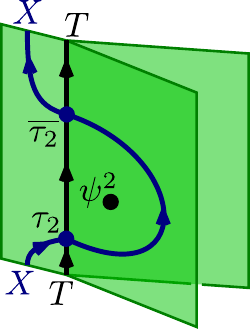} $=$
		\includegraphics[scale=1.0, valign=c]{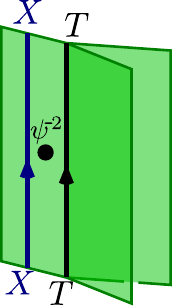}
		\caption{}
		\label{eq:TPSI4}
	\end{subfigure}\hspace{-1em}\raisebox{5.5em}{(T4)}\\
	\vspace{-15pt}
	\begin{subfigure}[b]{0.95\textwidth}
		\centering
		\includegraphics[scale=1.0, valign=c]{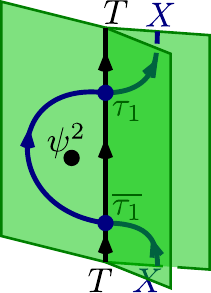} $=$
		\includegraphics[scale=1.0, valign=c]{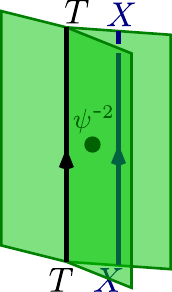}  , \quad
		\includegraphics[scale=1.0, valign=c]{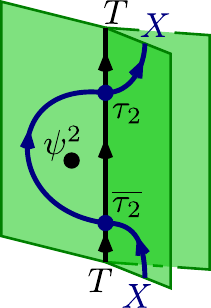} $=$
		\includegraphics[scale=1.0, valign=c]{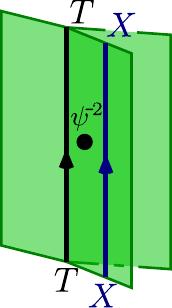}
		\caption{}
		\label{eq:TPSI5}
	\end{subfigure}\hspace{-1em}\raisebox{5.5em}{(T5)}\\
	\vspace{-15pt}
	\begin{subfigure}[b]{0.95\textwidth}
		\centering
		\includegraphics[scale=1.0, valign=c]{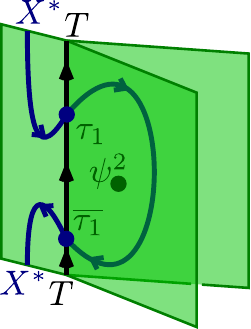} $=$
		\includegraphics[scale=1.0, valign=c]{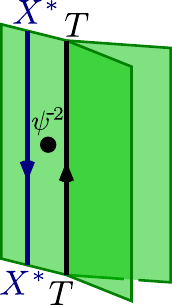}  , \quad
		\includegraphics[scale=1.0, valign=c]{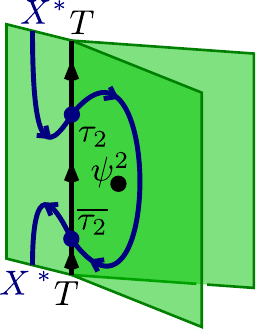} $=$
		\includegraphics[scale=1.0, valign=c]{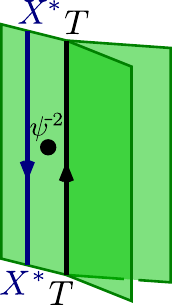}
		\caption{}
		\label{eq:TPSI6}
	\end{subfigure}\hspace{-1em}\raisebox{5.5em}{(T6)}\\
	\vspace{-15pt}
	\begin{subfigure}[b]{0.95\textwidth}
		\centering
		\includegraphics[scale=1.0, valign=c]{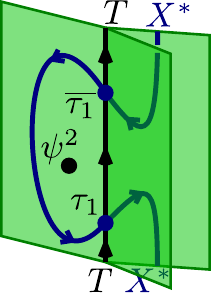} $=$
		\includegraphics[scale=1.0, valign=c]{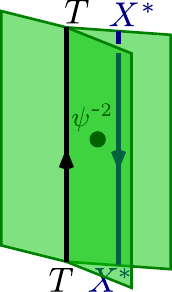}  , \quad
		\includegraphics[scale=1.0, valign=c]{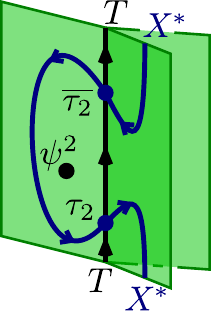} $=$
		\includegraphics[scale=1.0, valign=c]{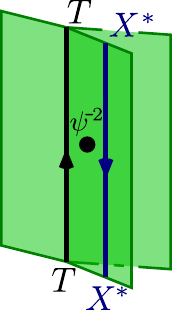}
		\caption{}
		\label{eq:TPSI7}
	\end{subfigure}\hspace{-1em}\raisebox{5.5em}{(T7)}
	\vspace{-15pt}
	\caption{%
	      Defining identities for the category $\mcC_\mcA$ constructed from an orbifold datum~$\A$ for any 3-dimensional defect TQFT, cf.\ \cite{CMRSS1}. 
	}
	\label{fig:CrossingIdentitiesPhiPsi}
\end{figure}

\clearpage

\section{Pipe functors}
\label{app:pipe_functors}

Throughout this section we fix a 3-dimensional defect TQFT $\mcZ \colon \Borddef_3(\D) \too \Vect$ over defect data $\D$ and we also fix an orbifold datum $\mcA$ for $\mcZ$ (which does not need to be simple).\footnote{We assume in this appendix that $\mcZ$ is an Euler-complete TQFT in order to avoid $\phi$- and $\psi$-insertions. However this is not really a limitation as for an arbitrary 3-dimensional defect TQFT this can always be achieved by passing to its ``Euler completion'' $\mcZ^{\odot}$; see also \cite[Appendix~B]{CMRSS1}.}
In this section we describe the pipe functors $P_1,P_2\colon \mcW \too \mcW_\mcA$, where $\mcW$ and $\mcW_\mcA$ are the monoidal categories associated to~$\mcZ$ and~$\mcA$ in \cite[Sect.\,4.2]{CMRSS1}. 
In the case where $\mcZ$ is the Reshetikhin--Turaev TQFT and $\mcA$ is the kind of orbifold data considered in \cite{MuleRunk} the construction we present here specialises
to the pipe functors introduced in \cite[Sect.\,3.3]{MuleRunk}, cf.~\eqref{eq:P-functor}. 
The objects of $\mcW$ are decorated lines that live on an $\mcA_2$-decorated plane and the category $\mcW_\mcA$ has as objects triples $(X, \tau_1, \tau_2)$ where $X \in \mcW$ and $\tau_1, \tau_2$ are isomorphisms that let an $X$-decorated line pass through $\mcA_1$ decorated lines.
For the construction of $P_1$, $P_2$ we introduce two intermediate categories $\mcD_1$, $\mcD_2$. The former has as objects tuples $(X,\tau_1)$ where $X$ is an object of $\mcW$ and $\tau_1$ is an isomorphism
\begin{equation}
    \tau_1 = \pic[1.25]{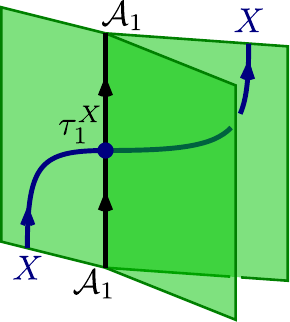}
    \quad\textrm{with inverse}\quad 
    \overline{\tau_1} = \pic[1.25]{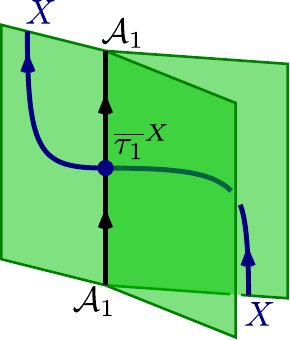}
    \,.
\end{equation}
We require that $\tau_1$, $\overline{\tau}_1$ satisfy the equations (T1) and (T4)--(T7) from \cite[p.\,40]{CMRSS1}. Similarly $\mcD_2$ has as objects tuples $(X,\tau_2)$ where $X \in \mcW$ and $\tau_2$ is an isomorphism 
\begin{equation}
    \tau_2 = \pic[1.25]{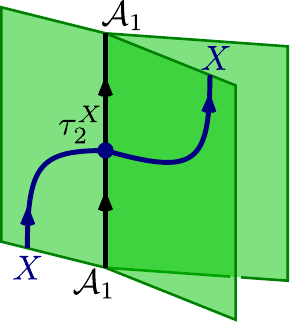}
    \quad\textrm{with inverse}\quad
    \overline{\tau_2} = \pic[1.25]{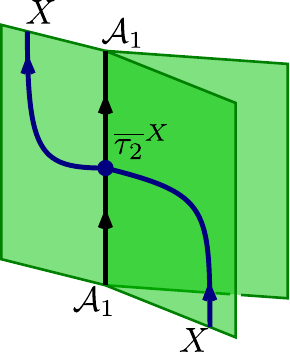}
    \,
\end{equation}
satisfying (T1) and (T4)--(T7) from \cite[p.\,40]{CMRSS1}. One obtains a strictly commuting diagram of forgetful functors
\begin{equation}
    \begin{tikzcd}
        \mcW & \mcD_1 \arrow[l, "U_1"'] \\
        \mcD_2 \arrow[u, "U_2"] & \mcW_{\mcA} \arrow[l, "U_{21}"] \arrow[u, "U_{12}"'] \arrow[ul, "U"']
    \end{tikzcd}.
\end{equation}

We now construct the ``half-pipe'' functors 
\begin{equation}
    \begin{tikzcd}
        \mcW \arrow[r, "H_1"] \arrow[d, "H_2"'] & \mcD_1 \arrow[d, "H_{12}"] \\
        \mcD_2 \arrow[r, "H_{21}"'] & \mcW_{\mcA}
    \end{tikzcd}\,,
\end{equation}
which will turn out to form biadjunctions with the respective forgetful functors, satisfying a separability condition. For $X \in \mcC$ we set
\begin{equation}
    H_1X = \pic[1.25]{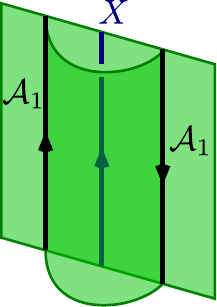} \quad \textrm{ and } \quad (\tau_1)_{H_1X} = \pic[1.25]{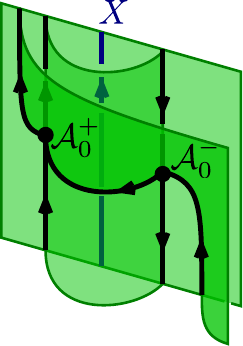} \,.
\end{equation} 
It can be readily checked that $\tau_{1,H_1X}$ indeed endows $H_1X$ with the structure of an object of $\mcD_1$. $H_2$ is defined similarly, except that each picture is reflected at the paper plane so that the half-pipe is added in the back:
\begin{equation}
    H_2X = \pic[1.25]{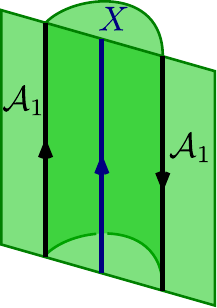} \quad \textrm{ and } \quad (\tau_2)_{H_2X} = \pic[1.25]{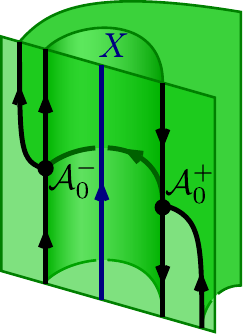} \,.
\end{equation} 
$H_{21}$ acts in the same way as $H_1$, by adding a half-pipe in the front. 
For any $(X,\tau_2) \in \mcD_2$ this equips $H_{21}(X,\tau_2)$ with a $\tau_1$ analogously to the case of $H_1$, so that we indeed end up with an object of $\mcW_{\mcA}$. 
Entirely analogously, $H_{12}$ acts as $H_2$ does.

\begin{proposition}
    For each $i \in \{1,2,12,21\}$, the functor $H_i$ is biadjoint to the corresponding
    forgetful functor, thus giving rise to two Frobenius monads $U_iH_i$ and $H_iU_i$. Each $U_iH_i$ is separable. Let $X \in \mcW$ and $(Y, \tau_1) \in \mcD_1$. 
    The unit $\eta$ and counit $\eps$ of the adjunction $H_1 \dashv U_1$ are given in components by
    \begin{equation}
        \eta_X = \pic[1.25]{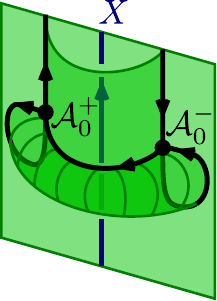}\,,  \qquad
        \eps_{(Y,\tau_1)} =\pic[1.25]{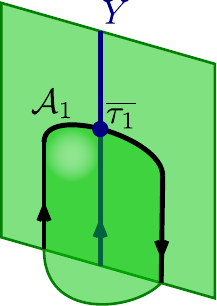}\,,
    \end{equation}
    and the unit $\widetilde{\eta}$ and counit $\widetilde{\eps}$ of the adjunction $H_1 \vdash U_1$ by
    \begin{equation}
        \widetilde\eta_{(Y,\tau_1)} = \pic[1.25]{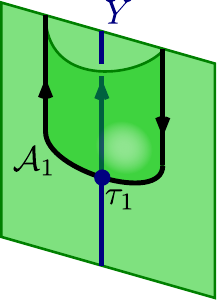}\,, \qquad 
        \widetilde\eps_{X}=\pic[1.25]{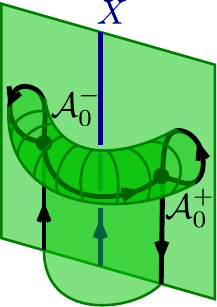}\,.
    \end{equation}
    The units and counits for the other adjunctions are obtained similarly.
\end{proposition}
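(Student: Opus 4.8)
The plan is to prove everything for $i=1$ and deduce the cases $i\in\{2,12,21\}$ by the evident relabelling, since reflecting the local pictures in the paper plane interchanges the roles of $\tau_1$ and $\tau_2$, of $H_1$ and $H_2$, and of $U_1$ and $U_2$. It is a standard fact that an ambidextrous adjunction $H_1\dashv U_1\dashv H_1$ automatically promotes $U_1H_1$ to a Frobenius monad and $H_1U_1$ to a Frobenius monad, so the substantive content is: (a) that $(\tau_1)_{H_1X}$ really turns $H_1X$ into an object of $\mcD_1$ and that $H_1$ is a functor; (b) that the four displayed transformations $\eta,\eps,\widetilde\eta,\widetilde\eps$ are natural; (c) the two pairs of zigzag identities; and (d) separability of $U_1H_1$. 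For (a) one must verify the relations (T1) and (T4)--(T7) for the half-pipe crossing $(\tau_1)_{H_1X}$; each is a local move supported near the added half-pipe and, after sliding the $A$-labelled $2$-stratum through, reduces to associativity and unitality of the $A$-action on $T$ together with the orbifold-datum conditions (O1)--(O8). All of these hold by definition of $\mcA$ and the local relations of $\mcZ$; because we assume $\mcZ$ Euler-complete, no $\phi$- or $\psi$-insertions (hence no correction scalars) are needed. Functoriality of $H_1$ is then immediate from the locality of the half-pipe construction.

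For (b), naturality of $\eta_X$ and $\widetilde\eps_X$ in $X$, and of $\eps_{(Y,\tau_1)}$ and $\widetilde\eta_{(Y,\tau_1)}$ in $(Y,\tau_1)$, follows because the half-pipe that is created or annihilated is defined entirely locally: it slides freely along the $X$-line past any decoration, and past a $\tau_1$-crossing precisely by the relation defining objects of $\mcD_1$. Step (c) is the core computation. For $H_1\dashv U_1$ one checks $\eps_{H_1X}\circ H_1(\eta_X)=\id_{H_1X}$ and $U_1(\eps_{(Y,\tau_1)})\circ\eta_{U_1(Y,\tau_1)}=\id_{U_1(Y,\tau_1)}$; graphically the first is a half-pipe created and then collapsed against the pre-existing pipe of $H_1X$, and the second is a small pipe bubble on the $Y$-line. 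Both collapse using the $\Delta$-separability and the Frobenius property of $A$ together with the normalisation of $T$, $\alpha$ and $\overline{\alpha}$ encoded in (O1)--(O8), with no residual scalar thanks to Euler-completeness. Reading the same pictures in the opposite direction, and using that the two half-pipe morphisms are mutually transposed, yields the zigzag identities for $U_1\dashv H_1$ with $\widetilde\eta,\widetilde\eps$.

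Finally, for (d) the monad multiplication of $U_1H_1$ is $U_1\eps_{H_1(-)}$ and, via the second adjunction, its comultiplication is $U_1\widetilde\eta_{H_1(-)}$ (under the standard identifications); their composite $\mu\circ\delta$ is a closed pipe bubble inserted on an $X$-line, which by $\Delta$-separability of $A$ and the relations governing $T$, $\alpha$, $\overline{\alpha}$ equals $\id_{U_1H_1X}$. I expect steps (c)--(d) to be the main obstacle: each individual move is one of the orbifold-datum axioms, but assembling them correctly requires careful graphical bookkeeping of the nested $A$- and $T$-strata, and to connect the general statement to the Reshetikhin--Turaev case one must also check that passing to the Euler completion $\mcZ^{\odot}$ reinstates exactly the $\phi$- and $\psi$-factors that recover the pipe functor of \cite[Sect.\,3.3]{MuleRunk}. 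The proofs there provide the template.
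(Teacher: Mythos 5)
Your proposal is correct and follows essentially the same approach as the paper: establish the zigzag identities for $H_1 \dashv U_1$ by explicit local manipulations using the orbifold-datum axioms (dragging the inner half-pipe and popping the resulting $\mcA$-bubble), obtain $U_1 \dashv H_1$ by the $180^\circ$ rotation/reflection symmetry, and deduce separability from the fact that $\eps\circ\widetilde\eta$ is a collapsible pipe bubble. You add a bit of useful bookkeeping the paper elides (well-definedness of $(\tau_1)_{H_1X}$, functoriality of $H_1$, naturality of the four transformations), and you correctly note that the Frobenius-monad structure on $U_1H_1$ is automatic once the biadjunction is in place, so the only remaining check beyond the zigzags is the separability identity.
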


\begin{proof}
    We check the necessary identities for the adjunctions involving $H_1$. 
    The remaining cases are similar and we leave them to the reader. 
    To see that $H_1 \dashv U_1$ we calculate
\begin{align}
        \eps_{H_1X} \circ H_1\eta_X &= 
        \pic[1.25]{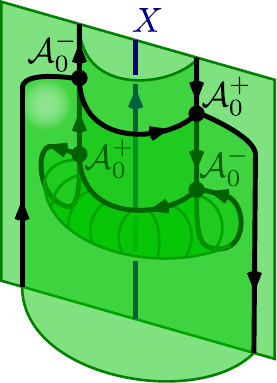}
        \overset{(1)}= \pic[1.25]{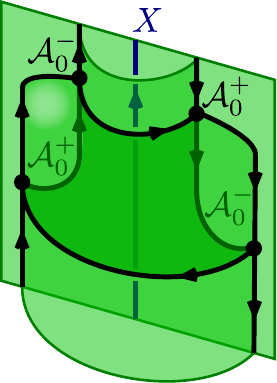} 
        \nonumber
        \\
        &\overset{(2)}= \pic[1.25]{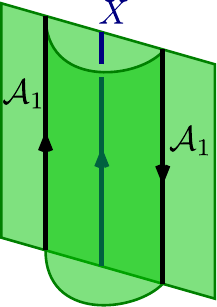} 
        = 1_{H_1X}\,.
    \end{align}
    In step (1) we first drag the end of the inner pipe upwards along the outside of the inner pipe. 
    We then attach it to the inside of the outer pipe using the defining equations for orbifold data, arriving at the second picture.
    In step (2) we pop the $\mcA$-decorated bubble as indicated, which is again possible by using the axioms of orbifold data. Furthermore we have
    \begin{align}
        U_1\eps_{(Y,\tau_1)} \circ \eta_{U_1(Y,\tau_1)} &=
        \pic[1.25]{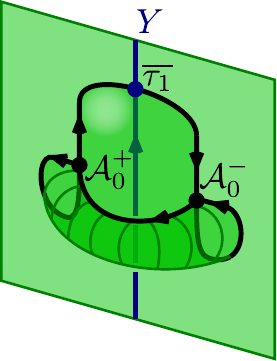} \overset{(1)}=
        \pic[1.25]{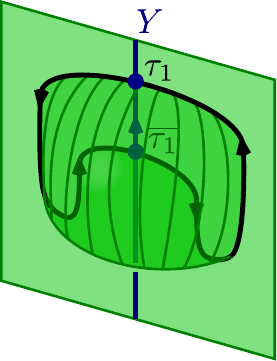}
        \nonumber
        \\ 
        &\overset{(2)}=
        \pic[1.25]{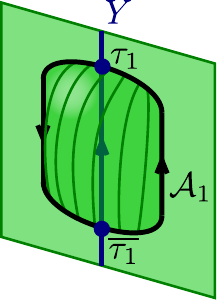} \overset{(3)}=
        \pic[1.25]{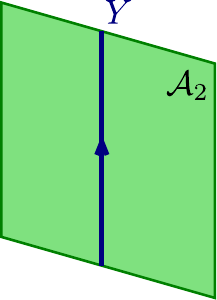} = 1_{U_1(Y,\tau_1)} \,.
    \end{align}
    In step (1) we use the compatibility of $\tau_1$ with $\mcA_0$ from the orbifold datum, step (2) is a matter of plugging in the definition of dual morphisms, while step (3) follows from $\overline{\tau_1}\tau_1 = 1$ and the fact that $\mcA$-decorated bubbles can be removed. 
    
    To see that $H_1 \vdash U_1$ we have to show that $\widetilde\eps_{U_1(Y,\tau_1)} \circ U_1\widetilde\eta_{(Y,\tau_1)} = 1_{U_1(Y,\tau_1)}$ and $H_1\widetilde\eps_{X} \circ \widetilde\eta_{H_1X} = 1_{H_1X}$. Both identities follow pictorially very similarly as above, only that the pictures are rotated by 180° around the axis that is orthogonal to the paper plane and the orientation of $X$- and $Y$-decorated lines is reversed. 

    Finally, the separability of the Frobenius monad $U_1H_1$ 
    follows from
    \begin{equation}
        \eps_{(Y,\tau_1)}\widetilde\eta_{(Y,\tau_1)} = 
        \pic[1.25]{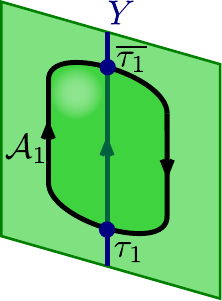} =
        \pic[1.25]{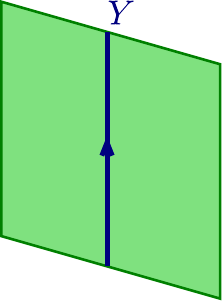} =
        1_{(Y,\tau_1)}\,.
    \end{equation}
\end{proof}

\begin{definition}
    The \textsl{pipe functors} $P_1,P_2 \colon \mcW \longrightarrow \mcW_\mcA$ are given by $P_1 := H_{12}H_1$ and $P_2 := H_{21}H_2$. They act by
    \begin{equation}
        P_1X = \pic[1.25]{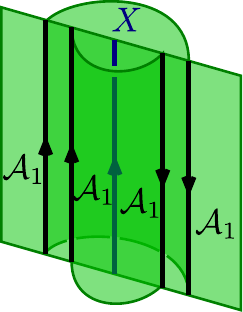}\,, \quad P_2X = \pic[1.25]{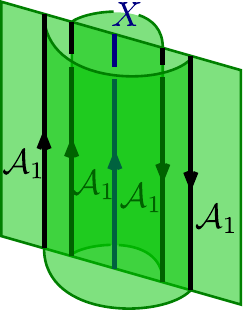}\,.
    \end{equation}
\end{definition}
It follows from the above proposition that both $P_1$ and $P_2$ are biadjoint to the forgetful functor $U\colon \mcW_\mcA \longrightarrow \mcW$ where the units and counits are obtained by composing the corresponding ones of the halfpipe functors. 
There is a unique isomorphism $\varphi \colon P_1 \overset{\cong}{\longrightarrow} P_2$ that is compatible with the biadjunction data and one verifies that it is given in components by
\begin{equation}
    \varphi_X = \pic[1.25]{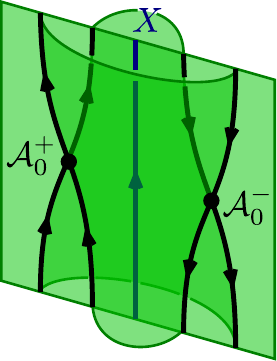}
    \quad\textrm{with inverse}\quad
    \varphi_X^{-1} = \pic[1.25]{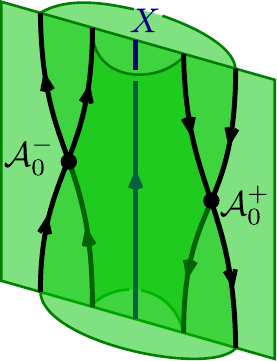} \, .
\end{equation}

\newpage

\end{document}